\newdefinition{definition}{Definition}[section]
\newdefinition{example}[definition]{Example}
\newdefinition{remark}[definition]{Remark}
\newtheorem{lemma}[definition]{Lemma}
\newtheorem{theorem}[definition]{Theorem}
\newtheorem{corollary}[definition]{Corollary}
\newtheorem{property}[definition]{Property}
\newcommand{\E}{\mathcal{E}} 
\newcommand{\B}{\mathcal{B}} 
\newcommand{\F}{\mathcal{F}} 
\newcommand{\A}{\mathcal{A}} 
\newcommand{\D}{\mathcal{D}} 
\newcommand{\R}{\mathbb{R}} 
\newcommand{\N}{\mathbb{N}} 
\newcommand{\Prb}{\mathbb{P}} 
\newcommand{\Ep}{\mathbb{E}} 
\newcommand{\ud}{\mathrm{d}} 
\newcommand{\slog}{\mathrm{slog}\,} 
\newcommand{\sexp}{\mathrm{sexp}\,} 
\numberwithin{equation}{section}
\journal{XXX}
\begin{document}

\begin{frontmatter}



\title{Measure-Valued Generators of General Piecewise Deterministic Markov Processes\tnoteref{t1}}
\tnotetext[t1]{This work was supported by National Natural Science Foundation of China (11471218) and Hebei Higher School Science and Technology Research Projects (ZD20131017)}


\author[csu]{Zhaoyang Liu}
\ead{liu.zhy@csu.edu.cn}

\author[csu]{Yong Jiao}
\ead{jiaoyong@csu.edu.cn}

\author[stdu]{Guoxin Liu\corref{cor1}}
\ead{liugx@stdu.edu.cn}

\cortext[cor1]{Corresponding author}

\address[csu]{School of Mathematics and Statistics, Central South University, Changsha, Hunan, China, 410083}
\address[stdu]{Department of Applied Mathematics and Physics, Shijiazhuang Tiedao University, Shijiazhuang, Hebei, China, 050043}

\begin{abstract}
  We consider a piecewise-deterministic Markov process (PDMP) with general conditional distribution of inter-occurrence time, which is called a general PDMP here. Our purpose is to establish the theory of measure-valued generator for general PDMPs. The additive functional of a semi-dynamic system (SDS) is introduced firstly, which presents us an analytic tool for the whole paper. The additive functionals of a general PDMP are represented in terms of additive functionals of the SDS. The necessary and sufficient conditions of being a local martingale or a special semimartingale for them are given. The measure-valued generator for a general PDMP is introduced, which takes value in the space of additive functionals of the SDS. And its domain is completely described by analytic conditions. The domain is extended to the locally (path-)finite variation functions. As an application of measure-valued generator, we study the expected cumulative discounted value of an additive functional of the general PDMP, and get a measure integro-differential equation satisfied by the expected cumulative discounted value function.
\end{abstract}

\begin{keyword}
piecewise-deterministic Markov process \sep semi-dynamic system \sep additive functional \sep measure-valued generator \sep measure integro-differential equation



\end{keyword}

\end{frontmatter}


\section{Intruduction}

Piecewise deterministic Markov processes (PDMPs in short) are a general class of Markov processes, for which the randomness is only on the jumping times and post-jump locations. Between two adjacent random jumps, a PDMP evolves as a semi-dynamic system (SDS in short). The pioneering work of PDMPs is Davis \cite{davis1984piecewise}, which introduces the definition and present the theory of PDMPs. Since then PDMPs have attracted wide attention from many fields, in particular as far as application in finance, insurance, statistical physics and queuing are concerned. For applications in insurance see in particular Dassios and Embretchts \cite{Dassios1989Martingales} and Rolski et al \cite{rolski1998stochastic}. They point out that PDMPs theory provides a systematic toolet for the study of the ruin theory. Kirch and Runggaldier \cite{Kirch2004Efficient} use the reduction technique to solve a hedging problem in a continuous-time market (with uncontrolled draft). Dai \cite{dai1995on} develops a unified approach via fluid limit model by virtue of PDMPs in queueing networks. Further applications in queuing can be found in B\"auerle \cite{Bauerle2001Discounted}, Graham and Robert \cite{Graham2009Interacting} and Rieder and Winter \cite{Rieder2009Optimal}. Applications in statistical physics see Faggionato et al. \cite{Faggionato2009Non} for example.

PDMPs theory and their optimization have been developed deeply in the last two decades. Davis \cite{davis1993markov} not only gives the framework of the model, but also, based on the local martingale theory of jumping processes, transplants the extended generator proposed by Stroock and Varadhan \cite{stroock1979multidimensional} for multidimensional diffusion processes into PDMPs theory and get the corresponding It\^o formula. Meanwhile, he develops the optimal control theory for PDMPs also. Jacod and Skorokhod \cite{jacod1996jumping} generalize the concept of PDMPs to general cases, in which they call them jumping Markov processes (JMPs in short). They study the characteristics of the general PDMPs and the normal subjects of Markov processes, such as additive functionals, semimartingales and semimartingale functions. Palmowski and Rolski \cite{palmowski2002technique} obtain the general expression of exponential martingale with the extended generator for a wide range of Markov processes, including Davis' PDMPs as special cases, and set up the corresponding theory of change of measure. The stability and ergodicity of PDMPs are developed by Costa \cite{costa1990stationary}, Costa and Dufour \cite{costa1999stability,costa2008stability}, Jin and Amin \cite{Jin2016Stability}. Based on the theory of marked point processes, Jacobsen \cite{jacobsen2006point} represents not only the time-homogeneous PDMPs, but also the nonhomogeneous ones. Hou and Liu \cite{hou2005markov} discuss the analytic properties of the characteristic triple of the general PDMPs and develop the concept of extended generator with a discrete part. The comprehensive literature on the stochastic control of PDMPs in insurance can be found in Schmidli \cite{schmidli2008stochastic}, Azcue and Muller \cite{azcue2014stochastic} and the references therein. Continuous average control of PDMPs is developed by Costa and Dufour \cite{costa2013continuous} and the references therein. de Saporta et al. \cite{deSaporta2016Optimal} study the impulse control of PDMPs. The numerical methods for stochastic control of PDMPs are studied by Brandejsky et al. \cite{Brandejsky2012Numerical}, de Saporta and Dufour \cite{deSaporta2010Numerical-impulse}, de Saporta et al. \cite{deSaporta2010Numerical-stopping,deSaporta2015Numerical} and the references therein.

The theory of infinitesimal characteristics of Markov processes is always one of the core research problems of Markov processes. Kolmogorov obtained the Kolmogorov forward and backward equations when he was investigating a class of Markov processes with finite-dimensional Euclidean state space whose transition probabilities possess density functions. Meanwhile, he found that the processes are characterized by the coefficients of the corresponding equations which have simple probabilistic meanings and which are infinitesimal characteristics of the processes. The infinitesimal characteristics of the processes introduced by Kolmogorov allow us to determine not only the transition probabilities but also to evaluate the distributions of various functionals of the processes. Utilizing the theory of semigroups of operators, Feller pointed out that this semigroup completely determines the transition probabilities, and that the semigroup is in many cases uniquely determined by its infinitesimal generator. Feller suggested that the infinitesimal generator be considered as an infinitesimal characteristic of the process. Kolmogorov's ideas constituted the basis of the mathematical theory of Markov processes and provided the direction for further investigations.

A PDMP is called to be \emph{quasi-It\^o}, in the terminology of \cite{jacod1996jumping}, if the predictable brackets of all the (locally) square-integrable martingales are absolutely continuous with respect to the Lebesgue measure. Jacod and Skorokhod \cite{jacod1996jumping} pointed out that it is equivalent to that all the conditional jumping time distributions have densities.
Jacod and Skorokhod \cite{jacod1996jumping} discuss the (weak) infinitesimal generators and extended generators. Jacobsen \cite{jacobsen2006point} extend the extended generators to the nonhomogeneous cases.
The situations they deal with for extended generators are both quasi-It\^o, which leads to that the domains of the extended generators are limited in the class of absolutely (path-)continuous functions (see \cite{jacobsen2006point}). Hou and Liu \cite{hou2005markov} introduce a new generator for the general PDMP, the extended generator coupled with a discrete part, which leads to the situation they deal with beyond the quasi-It\^o case, for which there is no singularly continuous part in the conditional jumping time distributions. This makes the domain extended to the class of piecewise absolutely (path-)continuous functions instead of the one of absolutely (path-)continuous functions. Just as Jacobsen \cite{jacobsen2006point} pointed out, it is of course advantageous to have the domain of the generator as large as possible.

The purpose of this paper is to establish the theory of the so-called measure-valued generator for general PDMPs. To do this, we firstly introduce the definition of the additive functionals of an SDS. Some basic properties of the additive functionals are discussed. Especially, we give the characteristics of the absolutely continuous part and purely discontinuous part in the Lebesgue decomposition of an additive functional. The concept of additive functionals of SDSs is the analytic basis for the general PDMPs theory.
Just like Jacod and Skorokhod said, the case of general PDMPs is much more technical. Based on the properties of the additive functionals of the SDS, the technical obstacle is overcome. We provide the representation theorem in terms of the additive functionals of the SDS and the necessary and sufficient conditions of being local martingales and special semimartingales for the additive functionals of the general PDMP. Therefore, we extend the results about additive functionals of \emph{quasi-Hunt} JMPs in \cite{jacod1996jumping} to general PDMPs.
Then, by virtue of the analytic theory of the additive functionals of SDSs and the ideas about extended generator, we introduce the concept of measure-valued generator which is valued in the space of additive functionals of the SDS with locally finite variation. We also give the description of its domain. Roughly speaking, the domain of generator is extended from absolutely (path-)continuous functions to functions with locally (path-)finite variation. And the corresponding It\^o formula is followed by the way. As the direct application of measure-valued generator, the domain of the extended generator is extended further. For a PDMP in the sense of Davis \cite{davis1993markov}, the results in this paper are almost the same except for a slight modification on his boundary condition. By the way, we also discuss the L-extended generator proposed by Kunita \cite{Kunita1969Absolute} for a general PDMP, which generalizes the corresponding results of Jacod and Skorokhod \cite{jacod1996jumping}.
Finally, as an application of the theory of measure-valued generator, we get a new measure integro-differential equation which is satisfied by the expected cumulative discounted value of an additive functional of a general PDMP, and give the uniqueness condition of the solution to the measure integro-differential equation. For illustration, we point out that, in some certain cases, the measure integro-differential equation becomes an integro-differential equation or an impulse integro-differential equation.

This paper is organized as follow.
We start with the definition of general piecewise deterministic Markov processes, and get their characteristic triple $(\phi,F,q)$ in Section 2. Roughly speaking, a general PDMP is a jumping Markov process introduced by Jacod and Skorokhod \cite{jacod1996jumping}, i.e., a strong Markov process with natural filtration of discrete type. Different from Jacod and Skorokhod \cite{jacod1996jumping}, we introduce the general PDMPs in the way of Hou and Liu \cite{hou2005markov}. By virtue of the concept of piecewise deterministic Markov skeleton processes, we get the necessary and sufficient conditions for such a process to be a strong Markov process in Theorem \ref{thm.PDP=>str.Markov}, and present the characterization of characteristic triple $(\phi,F,q)$ of a general PDMP, which generalize the corresponding results for quasi-Hunt cases in Jacod and Skorokhod \cite{jacod1996jumping}. Based on these, some basic properties of the characteristic triple $(\phi,F,q)$ are studied separately one by one.

Motivated by the representation theorem (Theorem 15 in \cite{jacod1996jumping}) for additive functionals of quasi-Hunt JMPs, we introduce the so-called additive functionals of a semi-dynamic system in Section 3. Some basic examples and properties are presented. Specially, the representations are given for the absolutely continuous part and the purely discontinuous part of an additive functional with locally finite variation, which depend only on the current state along the path of the SDS $\phi$. The concept of additive functionals of SDSs plays a key role as an analytic tool in this paper.
In Section 4, we devote to generalize the results of \cite{jacod1996jumping} for the additive functionals of quasi-Hunt JMPs to the general PDMPs. Theorem \ref{thm.A=a+b} shows that an additive functional of a general PDMP can be represented in terms of an additive functional $a$ of  the SDS $\phi$ and a measurable function $b:E\times\R_+\times E\mapsto\R$ with property (\ref{eq.b.invariant}). It is different from Theorem 15 of \cite{jacod1996jumping} that here $b$ is satisfied (\ref{eq.b.invariant}) instead of (\ref{eq.b=bar.b}). Furthermore, we present the necessary and sufficient conditions for an additive functional of a general PDMP to be a local martingale or a special semimartingale in terms of the corresponding $a$ and $b$.

In Section 5, we come to define the so-called measure-valued generators of general PDMPs. The analytic formulation is presented for the measure-valued generator, and its domain is completely described in an analytic way. By the way, we generalize the extended generators and L-extended generators for general PDMPs. Form which we can see that the measure-valued generator is a natural generalization of both the extended generator and L-extended generator for general PDMPs. As corollaries, we get the corresponding results for quasi-Hunt, quasi-It\^o and quasi-step cases respectively. It shows that the concept of the extended generators is more suitable to the quasi-It\^o PDMPs than the general cases.
As an application of the measure-valued generator, we study the expected cumulated discounted value function of an additive functional of a general PDMP in Section 6. Due to the extended domain of the measure-valued generator, the expected value function with locally (path-)finite variation can be investigated. It yields a measure integro-differential equation in terms of the measure-valued generator in general (see Theorem \ref{thm.V.expectation.predictable} and Corollary \ref{cor.V.expectation.optional}). And, under some certain conditions, the solution of the measure integro-differential equation is exactly the expected value function (see Theorem \ref{thm.UniqueSolution}). In the quasi-It\^o case, the expected value function of the usual integral type functional (\ref{eq.V.expectation.Ito}) is surely absolutely (path-)continuous and satisfies the usual integro-differential equation in terms of the extended generator but in the sense of (path-)almost everywhere (see Corollary \ref{cor.V.expectation.Ito} and \ref{cor.UniqueSolution.Ito}), which extend the results of Theorem 32.3 and 32.10 of Davis \cite{davis1993markov}. Especially, in some other special PDMPs, the expected value function may satisfy some integro-differential equations or impulse integro-differential equations (see Corollary \ref{cor.V.expectation.step}-\ref{cor.UniqueSolution.nonsingular}).

\section{Definitions and properties}

\subsection{Definition of general PDMPs}

Let $(\Omega, \F, \Prb)$ be a complete probability space, $E$ be a Polish space, and $\E$ the Borel $\sigma$-algebra on $E$. And let $X=\{X_t\}_{0\leqslant t<\tau}$ be a c\`adl\`ag stochastic process with lifetime $\tau$ defined on $(\Omega, \F, \Prb)$  and taking values in $(E, \E)$, and $\{\F_t\}_{0\leqslant t<\tau}$ the natural filtration of $X$.

In order to introduce the general PDMPs, we need the concept of piecewise deterministic Markov skeleton processes (PDPs in short) introduced by Hou and Liu \cite{hou2005markov}.

\begin{definition}
  A c\`adl\`ag stochastic process $X=\{X_t\}_{0\leqslant t<\tau}$ is called a \emph{piecewise deterministic Markov skeleton process} (PDP) if there exists a strictly increasing sequence of nonnegative random variables $\{\tau_n\}_{n\geqslant 0}$ (i.e., for any $n\geqslant 1$, $\tau_n<\tau_{n+1}$ if $\tau_n<\tau$) with $\tau_0=0$, $\tau_n\uparrow\tau$, and a sequence of measurable mappings $\phi_n:\R_+\times E\mapsto E$ such that
  \begin{equation}\label{eq.PiecewiseDeterministic}
    X_t=\sum_{n=0}^\infty\phi_n(t-\tau_n,X_{\tau_n})\bbbone_{\{\tau_n\leqslant t<\tau_{n+1}\}},
    \quad 0\leqslant t<\tau \quad \hbox{a.s.};
  \end{equation}
  and for any $n\in\N$ and a bounded measurable function $f$ defined on $(E^{[0,\infty)},$ $\E^{[0,\infty)})$,
  \begin{equation}\label{eq.MarkovSkeletonProperty}
    \Ep\big[f(X_{\tau_n+\cdot})|\F_{\tau_n}\big]=
    \Ep\big[f(X_{\tau_n+\cdot})|X_{\tau_n}\big].
  \end{equation}
\end{definition}

The formula (\ref{eq.PiecewiseDeterministic}) describes that the process is piecewise deterministic. The property (\ref{eq.MarkovSkeletonProperty}) states that the process has Markov property at each time $\tau_n$, which is called \emph{Markov skeleton property} as in Hou and Liu \cite{hou2005markov}. It is easily to see that the natural filtration $\F$ of the process $X$ is a \emph{discrete type filtration} (or in other word, a \emph{jumping filtration}), i.e.,
\begin{equation*}
  \F_t=\bigcup_{n=0}^\infty\big(\F_{\tau_n}\cap\{\tau_n\leqslant t<\tau_{n+1}\}\big),\quad 0\leqslant t<\tau,
\end{equation*}
where $\F_{\tau_n}=\sigma(X_0,\tau_1,X_{\tau_1},\dots,\tau_n,X_{\tau_n})$ (See Hou and Liu \cite{hou2005markov}, or Jacod and Skorohod \cite{Jacod1994Jumping} for examples). Let $\F_\infty=\bigvee_{n=1}^{\infty}\F_{\tau_n}$. Suppose that there exists a family of probability measures $\Prb_x$ on $(\Omega, \F)$ for $x\in E$ satisfying that for any fixed $B\in \F_{\infty}$, $x\mapsto \Prb_x\{B\}$ is $\E$-measurable, and for any fixed $x\in E$,
\begin{equation*}
  \Prb_x\{B\}=\Prb\{B|X_0=x\}, \quad B\in\F_{\infty}.
\end{equation*}

For a given PDP $X=\{X_t\}_{0\leqslant t<\tau}$, set $\sigma_0=0$, $\sigma_n:=\tau_n-\tau_{n-1}\,(n\geqslant 1)$. The Markov skeleton property (\ref{eq.MarkovSkeletonProperty}) implies that the sequence $\{(\sigma_n,X_{\tau_n})\}$ is a Markov sequence taking values in Polish space $(\R_+\times E,\B(\R_+)\times\E)$, and its transition kernel is independent of the first component. We call it a \emph{Markov skeleton sequence}. The transition kernel of the sequence $\{(\sigma_n,X_{\tau_n})\}$ is denoted by $\{G_n(x,\ud t,\ud y)\}_{n\geqslant 0}$.
Let $F_n(x,\ud t):=G_n(x,\ud t,E)$. Then for any $B\in\E$, $G_n(x,\ud t,B)\ll F_n(x,\ud t)$. Thus, by Radon-Nikodym Theorem, there exists a sequence of  $q_n:E\times\R_+\times\E\mapsto[0,1]$ such that $q_n(x,t,\cdot)$ is a probability measure on $(E,\E)$ for any fixed $(x,t)$, and $q_n(\cdot,\cdot,B)$ is $\E\times\B(\R_+)$-measurable for any fixed $B\in\E$.
$\{(\phi_n,F_n,q_n)\}$ is called the \emph{characteristic triple sequence} of the PDP $X$.
A PDP $X$ is called to be \emph{homogeneous} if the characteristic triple sequence is independent of $n$, i.e., $(\phi_n,F_n,q_n)\equiv(\phi,F,q)$. The triple $(\phi,F,q)$ is called the \emph{characteristic triple} of the homogeneous PDP $X$.

For a homogeneous PDP $X$ with the characteristic triple $(\phi,F,q)$, let $F(x,t):=F(x,(t,\infty])$ for $x\in E$, which is called the \emph{conditional survival function}, and $c(x):=\inf\{t>0:F(x,t)=0\}$ ($\inf\emptyset=+\infty$ by convention). Denote
$$\mathcal{I}_x:=
\left\{
  \begin{array}{ll}
    \R_+, & c(x)=\infty; \\
    \displaystyle[0,c(x)), & c(x)<\infty,\,F(x,c(x)-)=0; \\
    \displaystyle[0,c(x)], & c(x)<\infty,\,F(x,c(x)-)>0.
  \end{array}
\right.
$$

Now we present the definition of general PDMPs.
\begin{definition}
  A homogeneous PDP $X=\{X_t\}_{0\leqslant t<\tau}$ is called a \emph{general PDMP} if it is a strong Markov process.
\end{definition}

With the definitions above, we have the following theorem.

\begin{theorem}\label{thm.PDP=>str.Markov}
  A homogeneous PDP $X=\{X_t\}_{0\leqslant t<\tau}$ with characteristic triple $(\phi,F,q)$ is a homogeneous strong Markov process if and only if for any $x\in E$, $s,t\in\R_+$ with $s+t\in\mathcal{I}_x$ and $B\in\E$, the characteristic triple $(\phi,F,q)$ satisfies
  \textnormal{\begin{enumerate}[(i)]
    \item $\phi(0,x)=x$, $\phi(t,\phi(s,x))=\phi(s+t,x)$;
    \item $F(x,s)F(\phi(s,x),t)=F(x,s+t)$;
    \item $q(x,s+t,B)=q(\phi(s,x),t,B)$.
  \end{enumerate}}
\end{theorem}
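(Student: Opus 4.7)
My plan is to treat the equivalence by extracting each condition (i)--(iii) from the strong Markov property through its action on the first-jump data under $\Prb_x$, and conversely, from (i)--(iii), reconstructing the Markov property at a deterministic $t$ using the jumping-filtration form of $\F_t$ combined with the Markov skeleton property already available at each $\tau_n$.

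\emph{Necessity.} Assume $X$ is homogeneous strong Markov. The identity $\phi(0,x)=x$ is just $X_0=x$ $\Prb_x$-a.s. For the semigroup property in (i), I use the event $\{\tau_1 > s+t\}$, which has positive probability whenever $s+t$ lies in the interior of $\mathcal{I}_x$: directly $X_{s+t}=\phi(s+t,x)$, while the Markov property at the deterministic time $s$ identifies $X_{s+\cdot}$ with a process started at $X_s=\phi(s,x)$, so the same path point equals $\phi(t,\phi(s,x))$; the boundary $s+t=c(x)$ follows by right-continuity of $\phi$ (or directly from the jump atom when $F(x,c(x)-)>0$). For (ii), the conditional survival function of $\tau_1$ under $\Prb_x$ is $F(x,\cdot)$, and the Markov property at $s$ gives that, conditional on $\{\tau_1>s\}$, the residual time $\tau_1-s$ has the law of $\tau_1$ under $\Prb_{\phi(s,x)}$, whence $F(x,s+t)/F(x,s)=F(\phi(s,x),t)$ on $\{F(x,s)>0\}$. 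The same argument applied to the post-jump location $X_{\tau_1}$, whose conditional law given $\tau_1=u$ is $q(x,u,\cdot)$, yields (iii).

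\emph{Sufficiency.} Assume (i)--(iii). The discrete-type structure of $\{\F_t\}$ combined with the Markov skeleton property at $\tau_n$ implies that on $\{\tau_n\leqslant t<\tau_{n+1}\}$, the conditional law of $X_{t+\cdot}$ given $\F_t$ is a measurable function of $X_{\tau_n}$ and $u:=t-\tau_n$ alone. Writing $y=X_{\tau_n}$, so that $X_t=\phi(u,y)$, I show via (i)--(iii) that this dependence passes only through $X_t$: (ii) gives $F(y,u+s)/F(y,u)=F(X_t,s)$, so the conditional survival of $\tau_{n+1}-t$ is $F(X_t,\cdot)$; (iii) gives $q(y,u+s,\cdot)=q(X_t,s,\cdot)$, so the conditional post-jump law is $q(X_t,\tau_{n+1}-t,\cdot)$; and (i) makes the deterministic trajectory on $[t,\tau_{n+1})$ equal $\phi(\cdot-t,X_t)$. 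These reproduce the first-interval statistics of a process started at $X_t$, and iterating via the Markov skeleton property at $\tau_{n+1},\tau_{n+2},\dots$ propagates the identity to the entire future, giving the Markov property at an arbitrary deterministic $t$.

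The lift from Markov to strong Markov at a general $\F$-stopping time $T$ uses the same jumping-filtration bookkeeping: decompose $\Omega$ on $\{\tau_n\leqslant T<\tau_{n+1}\}$, invoke the Markov skeleton property on $\{T=\tau_n\}$ and the deterministic-$t$ reduction on $\{T>\tau_n\}$, and pass to general $T$ by approximation by countably-valued stopping times. The main obstacle I foresee is keeping the quantifier $s+t\in\mathcal{I}_x$ aligned with the probabilistic argument: on the set $\{F(x,s)=0\}$ the conditioning event in (ii)--(iii) is null and those identities become trivial, while the boundary $s+t=c(x)$ with $F(x,c(x)-)>0$ has to be handled via the jump atom -- this is exactly what the three-case definition of $\mathcal{I}_x$ in the statement is designed to accommodate. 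A smaller but necessary check is confirming, from the Radon--Nikodym construction of $q$ and time-homogeneity, that the regular conditional law of $(\tau_{n+1}-\tau_n,X_{\tau_{n+1}})$ given $X_{\tau_n}$ really factors as $F(X_{\tau_n},\ud v)\otimes q(X_{\tau_n},v,\ud z)$; this factorization is the hidden input on which the recursive step rests.
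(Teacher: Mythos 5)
Your proposal is correct and follows essentially the same route as the paper's proof: necessity by applying the Markov property at a deterministic time to the first-jump data $(\tau_1, X_{\tau_1})$ under $\Prb_x$, and sufficiency by using the jumping-filtration decomposition on $\{\tau_n\leqslant t<\tau_{n+1}\}$ together with the Markov skeleton property to reduce the conditional law of the future to a function of $X_t$, then lifting to stopping times via the discrete-type structure of $\{\F_t\}$. The only cosmetic difference is that the paper derives the flow property (i) through a conditional-expectation computation of $\bbbone_{\{\phi(t+s,x)\}}(X_{t+s})\bbbone_{\{\tau_1>t+s\}}$ and handles general stopping times via their explicit representation $T=\tau_{n-1}+s_n(Y_{n-1})$ rather than by approximation, but these are variants of the same argument.
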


\begin{proof}
  Now we will come to prove the sufficiency. First we will prove the simple Markov property, that is to prove for any $s,t\in\R_+$,
  $$\Ep_x[f(X_{t+s})|\F_t]=\Ep[f(X_{t+s})|X_t], \quad x\in E$$
  for each bounded measurable function $f$. By the definition of a homogeneous PDP and the property of the discrete type filtration, we have
  $$\Prb\{\sigma_{n+1}>s\,|\F_{\tau_n}\}=\Prb\{\sigma_{n+1}>s\,|X_{\tau_n}\}=F(X_{\tau_n},s).$$
  Then by (ii), we have
  \begin{align*}
      & \Prb_x\{\tau_{n+1}>t+s\,|\F_t\}\bbbone_{\{\tau_n\leqslant t<\tau_{n+1}\}} \\
    = & \frac{F(X_{\tau_n},t+s-\tau_n)}{F(X_{\tau_n},t-\tau_n)}
    \bbbone_{\{\tau_n\leqslant t<\tau_{n+1}\}}\\
    = & \frac{F(X_{\tau_n},t-\tau_n)\,F(\phi(t-\tau_n,X_{\tau_n}),s)}{F(X_{\tau_n},t-\tau_n)} \bbbone_{\{\tau_n\leqslant t<\tau_{n+1}\}}\\
    = & F(\phi(t-\tau_n,X_{\tau_n}),s)\,\bbbone_{\{\tau_n\leqslant t<\tau_{n+1}\}}\\
    = & F(X_t,s)\,\bbbone_{\{\tau_n\leqslant t<\tau_{n+1}\}}. 
  \end{align*}
  Denote $\tilde{\tau}:=\tau_{n+1}$ if $\tau_n\leqslant t<\tau_{n+1}$, i.e., $\tilde{\tau}$ is the next random jumping time. Summing up the both sides, we have
  \begin{equation}\label{eq.MarkovProperty.1}
    \Prb_x\{\tilde{\tau}>t+s\,|\F_t\}=F(X_t,s), \quad t<\tau.
  \end{equation}
  Notice that
  \begin{gather*}
    F(x,t+\ud s)=F(x,t)\,F(\phi(t,x),\ud s), \\
    q(x,t+s,B)=q(\phi(t,x),s,B).
  \end{gather*}
  And by the definition of a homogeneous PDP and the property of the discrete type filtration, we have
  \begin{align*}
     & \Prb_x\{X_{\tilde{\tau}}\in B\,|\F_t\}\,\bbbone_{\{\tau_n\leqslant t<\tau_{n+1}\}} \\
    = & \Prb_x\{X_{\tau_{n+1}}\in B\,|\F_t\}\,\bbbone_{\{\tau_n\leqslant t<\tau_{n+1}\}} \\
    = & \frac{\int_{(t-\tau_n,\infty)}q(X_{\tau_n},u, B)\,F(X_{\tau_n},\ud u)}{F(X_{\tau_n},t-\tau_n)} \bbbone_{\{\tau_n\leqslant t<\tau_{n+1}\}} \\
    = & \int_{(0,\infty)}q(\phi(t-\tau_n,X_{\tau_n}),s,B)F(\phi(t-\tau_n,X_{\tau_n}),\ud s)
    \bbbone_{\{\tau_n\leqslant t<\tau_{n+1}\}} \\
    = & \int_{(0,\infty)}q(X_t,s,B)F(X_t,\ud s)\,
    \bbbone_{\{\tau_n\leqslant t<\tau_{n+1}\}} \\
    = & \Prb_x\{X_{\tilde{\tau}}\in B\,|X_t\}\,\bbbone_{\{\tau_n\leqslant t<\tau_{n+1}\}}. 
  \end{align*}
  Again, sum up the both sides,
  \begin{equation}\label{eq.MarkovProperty.2}
    \Prb_x\{X_{\tilde{\tau}}\in B\,|\F_t\}=\Prb_x\{X_{\tilde{\tau}}\in B\,|X_t\}, \quad t<\tau.
  \end{equation}
  for all $x\in E$.

  From (\ref{eq.MarkovProperty.1}) and (\ref{eq.MarkovProperty.2}), the distributions of $\tilde{\tau}$ and $X_{\tilde{\tau}}$ conditioning on $\F_t$ both only depend on $X_t$. And any $\sigma_n$, the time length between two adjacent random jumps after $\tilde{\tau}$, and each post-jump state are only relate to $X_{\tilde{\tau}}$. While, $X_{\tilde{\tau}}$ is only dependent on $X_t$ and independent on the history before $t$. So we prove that if $f:(E^{[0,\infty)},\E^{[0,\infty)})\mapsto(\R,\B(\R))$ is bounded and measurable, then
  $$\Ep_x[f(X_{t+\cdot})|\F_t]=\Ep_x[f(X_{t+\cdot})|X_t]$$
  for each $x\in E$.

  In order to get the strong Markov property, we use the character of stopping time of discrete type filtration. If we let $Y_n:=(X_0,\sigma_1,X_{\tau_1},\dots,\sigma_n,X_{\tau_n})$, then for each stopping time $T$, there exists a sequence of functions $s_1,s_2,\dots$ such that
  $$T\bbbone_{\{\tau_{n-1}<T\leqslant\tau_n\}}=\big[\big(\tau_{n-1}+s_n(Y_{n-1})\big)\land\tau_n\big] \bbbone_{\{\tau_{n-1}<T\leqslant\tau_n\}}.$$
  This means that there are three cases on $\{T<\infty\}$: $T=0$, $T=\tau_n$ for some $n$, or $T=\tau_{n-1}+s_n(Y_{n-1})$ for some $n$. Furthermore, $\{T=0\}$, $\{T=\tau_n\}$ and $\{T=\tau_{n-1}+s_n(Y_{n-1})\}$ are all $\F_T$-measurable. Likewise, we define that $\tilde{\tau}:=\tau_n$ if $\tau_{n-1}\leqslant T<\tau_n$. Making the same deduction as above for the three cases we can show that
  \begin{gather*}
    \Prb_x\{(\tilde{\tau}>T+s)\cap(T<\infty)|\F_T\}=F(X_T,s)\bbbone_{\{T<\infty\}}, \\
    \Prb_x\{X_{\tilde{\tau}}\in\ud y\,|\F_T\}=\Prb_x\{X_{\tilde{\tau}}\in\ud y\,|X_T\}.
  \end{gather*}
  The same reasoning as the proof of Markov property can prove that, for any bounded and measurable function $f:(E^{[0,\infty)},\E^{[0,\infty)})\mapsto(\R,\B(\R))$, we have
  $$\Ep_x[f(X_{T+\cdot})\bbbone_{\{T<\infty\}}|\F_T]=
  \Ep_x[f(X_{T+\cdot})|X_T]\bbbone_{\{T<\infty\}}.$$
  Hence we proved the strong Markov property of the process $X$.

  Now we are in the position to give the proof of necessity.

  We begin with the property (ii). For each $x\in E$, $t\in\mathcal{I}_x$ and $s\geqslant 0$, we have
  \begin{align*}
    F(x, t+s) = & \Prb_x\{\tau_1>t+s\}\\
  = & \Prb_x\{\tau_1>t\}\,\Prb_x\{\tau_1>t+s|\tau_1>t\}\\
  = & F(x,t)\,\Ep_x[\Prb_x\{\tau_1>t+s|\F_t\}|\tau_1>t]\\
  = & F(x,t)\,\Ep_x[\Prb_x\{\tau_1>t+s|X_t\}|\tau_1>t]\\
  = & F(x,t)\,\Ep_x[\Prb_{\phi(t,x)}\{\tau_1>t+s\}|\tau_1>t]\\
  = & F(x,t)\,\Prb_{\phi(t,x)}\{\tau_1>t+s\}\\
  = & F(x,t)F(\phi(t,x),s).
  \end{align*}
  The property (ii) is proved.

  We turn to the proof of the property (i). For $x\in E$, $s,t\in \mathcal{I}_x$ with $t+s\in\mathcal{I}_x$, and a single point set $\{\phi(t+s,x)\}\in \E$, we consider the conditional expectation $\Ep_x[\bbbone_{\{\phi(t+s,x)\}}(X_{t+s})\,\bbbone_{\{\tau_1>t+s\}}|\F_t]$. On the one hand, It follows from Markov property that
  \begin{align*}
    &\Ep_x[\Ep_x[\bbbone_{\{\phi(t+s,x)\}}(X_{t+s})\,\bbbone_{\{\tau_1>t+s\}}|\F_t]]\\
   =&\Ep_x[\Ep_x[\bbbone_{\{\phi(t+s,x)\}}(X_{s}\circ \theta_t)\,\bbbone_{\{\tau_1>t\}}\,\bbbone_{\{\tau_1\circ\theta_t>s\}}|\F_t]]\\
   =&\Ep_x[\bbbone_{\{\tau_1>t\}}]\,\Ep_x[\Ep_x[\bbbone_{\{\phi(t+s,x)\}}(X_{s}\circ \theta_t)\,\bbbone_{\{\tau_1\circ\theta_t>s\}}|X_t]]\\
   =&F(x,t)\, \Ep_{\phi(t,x)}[\bbbone_{\{\phi(t+s,x)\}}(\phi(s,\phi(t,x)))\,\bbbone_{\{\tau_1>s\}}]\\
   =&\bbbone_{\{\phi(t+s,x)\}}(\phi(s,\phi(t,x))\,F(x,t)\,F(\phi(t,x),s),
  \end{align*}
  where $\theta_t$ is the shift operator.
  On the other hand,
  \begin{align*}
    &\Ep_x[\Ep_x[\bbbone_{\{\phi(t+s,x)\}}(X_{t+s})\,\bbbone_{\{\tau_1>t+s\}}|\F_t]]\\
   =&\Ep_x[\bbbone_{\{\phi(t+s,x)\}}(X_{t+s})\,\bbbone_{\{\tau_1>t+s\}}]\\
   =&\Ep_x[\bbbone_{\{\phi(t+s,x)\}}(\phi(t+s,x))\,\bbbone_{\{\tau_1>t+s\}}]\\
   =&\Ep_x[\bbbone_{\{\tau_1>t+s\}}]\\
   =&F(x,t+s).
  \end{align*}
  Therefore,
  $$\bbbone_{\{\phi(t+s,x)\}}(\phi(s,\phi(t,x))\,F(x,t)\,F(\phi(t,x),s)=F(x,t+s).$$
  In these cases, $F(x,t+s)>0$. It follows from (ii) that
  $$\bbbone_{\{\phi(t+s,x)\}}(\phi(s,\phi(t,x)))=1.$$
  That is $\phi(t+s,x)=\phi(s,\phi(t,x))$. It is obviously that $\phi(0,x)=x$.

  At last, for $x\in E$, $s,t\in \mathcal{I} _x$ with $t+s\in\mathcal{I} _x$ and $B\in \E$, we have
  \begin{align*}
    G(x, t+\ud s, B) = & \Prb_x\{\tau_1\in t+\ud s, X_{\tau_1}\in B\}\\
   = & F(x,t)\,\Prb_x\{\tau_1\in t+\ud s, X_{\tau_1}\in B|\tau_1>t\}\\
   = & F(x,t)\,\Ep_x[\Prb_x\{\tau_1\in t+\ud s, X_{\tau_1}\in B|\F_t\}|\tau_1>t]\\
   = & F(x,t)\,\Ep_x[\Prb_{\phi(t,x)}\{\tau_1\in t+\ud s, X_{\tau_1}\in B\}|\tau_1>t]\\
   = & F(x,t)\,\Prb_{\phi(t,x)}\{\tau_1\in t+\ud s, X_{\tau_1}\in B\}\\
   = & F(x,t)\,G(\phi(t,x), \ud s, B).
  \end{align*}
  Hence,
  \begin{align*}
    q(x, t+s, B) = & \frac{G(x,t+\ud s, B)}{F(x, t+\ud s)}\\
   = & \frac{F(x,t)G(\phi(t,x),\ud s, B)}{F(x,t)F(\phi(t,x), \ud s)}\\
   = & q(\phi(t,x), s,B).
  \end{align*}

  This completes the proof of the theorem.
\end{proof}

\subsection{Properties of the characteristic triple}

Let $X=\{X_t\}_{0\leqslant t<\tau}$ is a general PDMP with characteristic triple $(\phi,F,q)$. It follows from the subsection above that the property (i), (ii) and (iii) of $\phi$, $F$ and $q$ respectively can serve as the starting point for the development of the theory of general PDMPs. We firstly focus on the mapping $\phi$ which describes the deterministic evolution between two adjacent random jumps. Theorem \ref{thm.PDP=>str.Markov} (i) indicates that $\phi$ is in fact a semi-dynamic system (SDS) defined as follow.

\begin{definition}
  Let $(E,\E)$ be a Polish space, $\phi:(\R_+\times E,\B(\R_+)\times\E)\mapsto(E,\E)$ a measurable mapping. $\phi$ is called a \emph{semi-dynamic system} (or a \emph{semi-flow}) if for any $x\in E$, $s,t\in\R_+$ and $s+t<c(x)$,
  \begin{equation}\label{eq.SDS}
    \phi(0,x)=x,\quad \phi(t,\phi(s,x))=\phi(s+t,x),
  \end{equation}
  where
  \begin{equation}\label{eq.killing time}
    c(x)=c(\phi(t,x))+t,\quad 0\leqslant t<c(x).
  \end{equation}
\end{definition}

For convenience, we extend the state space by adding an isolated point $\Delta$ to $E$ and a part of the boundary of $E$. Let
\begin{equation*}
  \phi(c(x),x):=\left\{
    \begin{array}{ll}
    \displaystyle\lim_{t\uparrow c(x)}\phi(t,x), &
    \hbox{if $\displaystyle\lim_{t\uparrow c(x)}\phi(t,x)$ exists;} \\
    \Delta, & \hbox{otherwise.}
  \end{array}\right.
\end{equation*}
Thus, (\ref{eq.SDS}) still holds for $s+t=c(x)$ if $c(x)<\infty$.
Denote
$$\partial^+E:=\big\{\phi(c(x),x):x\in E\big\}.$$
We call $\partial^+E$ the \emph{flowing-out boundary} of the state space $E$, and let $\bar{E}:=E\cup\partial^+E$.

The subset of $\bar{E}$,
$\Phi_x:=\big\{\phi(t,x):t\in\mathcal{I}_x\big\}$ for any $x\in E$,
is called a \emph{path} (or a \emph{trajectory}) of the SDS $\phi$ starting from $x$ and killed at time $c(x)$.

A state $x\in E$ is called an \emph{equilibrium state} if $\phi(t,x)\equiv x$ holds for all $t\in \R_+$. An equilibrium state is also called a \textit{rest state} or a \emph{fixed state} somewhere. In this case, the path $\Phi_x$ contains only one state, that is, $\Phi_x=\{x\}$. The set of all the equilibrium states of the SDS $\phi$ is denoted by $E_e$.

A state $x\in E$ is called a \emph{periodic state} if $\phi(\cdot,x)$ is periodic. Denote minimal period by $T_x\in\R_+$. In this case, the path $\Phi_x$ is called to be \emph{periodic}. The set of all periodic states of the SDS $\phi$ is denoted by $E_p$. Note that an equilibrium state is a special case for $T_x=0$, thus $E_e\subset E_p$. From (\ref{eq.killing time}), it is easily to check that $c(x)=\infty$ for $x\in E_p$.

A state $x\in E\setminus E_p$ is called an \emph{aperiodic state}. For convenience, we let $T_x=\infty$ when $x$ is an aperiodic state.

The essential difference between an SDS and a dynamic system is that, starting from the initial state, an SDS can predict the future, but can not recall the history exactly. That is, an SDS allows the situation that different paths starting from different states join together at some states.

\begin{definition}\label{def.confluent state}
  A state $x\in E$ is called a \emph{confluent state} of an SDS $\phi$ if either one of the following conditions holds:
  \begin{enumerate}[(i)]
    \item there exist $x_1,x_2\in E\setminus E_p$ with $x_1\notin \Phi_{x_2}$, $x_2\notin\Phi_{x_1}$ and $\Phi_{x_1}\cap\Phi_{x_2}\neq \emptyset$ such that $x=\phi(t^*(x_1,x_2),x_1)$;
    \item there exists $x_1\in E\setminus E_p$ and $x_2\in E_p$ with $\Phi_{x_1}\supset\Phi_{x_2}$ such that $x=\phi(t^*(x_1,x_2),x_1)$,
  \end{enumerate}
  where $t^*(x_1,x_2):=\inf\{t>0:\phi(t,x_1)\in\Phi_{x_2}\}$.
\end{definition}

To consider the conditional survival function $F:E\times\R_+\mapsto[0,1]$ satisfying Theorem \ref{thm.PDP=>str.Markov} (ii) together with $F(x,0)=1$, let us recall the Stieltjes versions of exponentials and logarithms (see Meyer (1966) or Sharpe (1988), A.4).
In fact, $\{F(x,\cdot)\}_{x\in E}$ is a family of \emph{M-functions}, i.e., for any $x\in E$, $F(x,\cdot):[0,\infty)\mapsto[0,1]$ is a right continuous and decreasing function with $F(x,0)=1$. The \emph{Stieltjes logarithm} of the M-function $F(x,\cdot)$ is defined as
\begin{equation}\label{eq.slogF}
  \Lambda(x,t)=\slog F(x,t):=\int_{(0,t]}\!\frac{F(x,\ud s)}{F(x,s-)},\quad t\in\mathcal{I}_x.
\end{equation}
 Then $\{\Lambda(x,\cdot)\}_{x\in E}$ be a family of \emph{A-functions}, i.e., for any $x\in E$, $\Lambda(x,\cdot):[0,\infty)\mapsto[0,\infty]$ is a right continuous and increasing function such that $\Lambda(x,0)=0$, $\Delta\Lambda(x,t)<1$ for all $t$ with $\Lambda(x,t)<\Lambda(x,\infty)$, with $\Delta\Lambda(x,t)=1$ possible if $\Lambda(x,t)=\Lambda(x,\infty)<\infty$ (here $\Lambda(x,\infty):=\lim_{t\uparrow c(x)}\Lambda(x,t)$).
The \emph{Stieltjes exponential} of the A-function $\Lambda(x,\cdot)$ is defined as
\begin{equation}\label{eq.sexpLambda}
  \sexp \Lambda(x,t):=e^{-\Lambda^c(x,t)}\prod_{0<s\leqslant t}\big[1-\Delta\Lambda(x,s)\big],\quad t\in\mathcal{I}_x,
\end{equation}
where $\Lambda^c(x,\cdot)$ is the continuous part of $\Lambda(x,\cdot)$ and
$$\Delta\Lambda(x,t):=\Lambda(x,t)-\Lambda(x,t-),\quad t\in\mathcal{I}_x\setminus\{0\},\,x\in E.$$
It is known that the mapping $F\mapsto\slog F$ is a bijective mapping of the class of M-functions onto the class of A-functions, and that $\Lambda=\slog F$ if and only if $F=\sexp\Lambda$.

The function $\Lambda$ defined by the (\ref{eq.slogF})
is called the \emph{conditional hazard function}. Since the conditional survival function $F$ satisfies Theorem \ref{thm.PDP=>str.Markov} (ii), we have the following theorem.

\begin{theorem}\label{thm.A add<=>F mult}
  The conditional hazard function $\Lambda$ satisfies that for any $x\in E$, $s,t\in\R_+$ and $s+t\in\mathcal{I}_x$,
  \begin{equation}
    \Lambda(x,0)=0,\quad \Lambda(x,s+t)=\Lambda(x,s)+\Lambda(\phi(s,x),t)
  \end{equation}
  if and only if the conditional survival function $F$ satisfies that for any $x\in E$, $s,t\in\R_+$ and $s+t\in\mathcal{I}_x$,
  \begin{equation}
    F(x,0)=1,\quad F(x,s+t)=F(x,s)\,F(\phi(s,x),t).
  \end{equation}
\end{theorem}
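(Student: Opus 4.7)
The plan is to exploit the bijection between M-functions and A-functions given by (\ref{eq.slogF})--(\ref{eq.sexpLambda}), and to translate the multiplicative structure of $F$ in the $(s,t)$ variables directly into the additive structure of $\Lambda$ via these formulas.

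For \emph{necessity}, I would assume the multiplicative property of $F$ and split the defining integral of $\Lambda(x, s+t)$ at $s$:
$$\Lambda(x, s+t) = \int_{(0,s]} \frac{F(x,\ud u)}{F(x, u-)} + \int_{(s, s+t]} \frac{F(x,\ud u)}{F(x, u-)}.$$
The first integral is exactly $\Lambda(x, s)$. For the second, I would change variables $u = s + v$ and invoke $F(x, s+v) = F(x,s)\,F(\phi(s,x), v)$ for $v \in [0, t]$; this yields $F(x, s + \ud v) = F(x,s)\,F(\phi(s,x), \ud v)$ and $F(x, (s+v)-) = F(x,s)\,F(\phi(s,x), v-)$, so when $F(x, s) > 0$ the common factor cancels and the integral becomes $\Lambda(\phi(s,x), t)$.

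For \emph{sufficiency}, I would apply $F = \sexp \Lambda$. The hypothesis, taken at intermediate times $s + v$ for all $v \in [0, t]$ with $s+v \in \mathcal{I}_x$, gives
$$\Lambda(x, s + v) - \Lambda(x, s) = \Lambda(\phi(s,x), v),$$
which says the Stieltjes measure $\ud_u \Lambda(x, u)$ restricted to $(s, s+t]$ is the push-forward of $\ud_v \Lambda(\phi(s,x), v)$ on $(0, t]$ under the shift $v \mapsto s + v$. This decomposition passes separately to the continuous part and to the atoms, giving $\Lambda^c(x, s+t) = \Lambda^c(x, s) + \Lambda^c(\phi(s,x), t)$ together with a shift-bijection between the jumps of $\Lambda(x, \cdot)$ on $(s, s+t]$ and those of $\Lambda(\phi(s,x), \cdot)$ on $(0, t]$. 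Substituting into (\ref{eq.sexpLambda}) then factors both $e^{-\Lambda^c(x, s+t)}$ and the jump product $\prod_{0<u\leqslant s+t} [1 - \Delta\Lambda(x, u)]$ into the desired product $F(x, s) \cdot F(\phi(s,x), t)$. The trivial conditions $\Lambda(x, 0) = 0$ and $F(x, 0) = 1$ correspond directly to each other under the Stieltjes bijection.

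The main technical obstacle is the boundary case $\mathcal{I}_x = [0, c(x)]$ with $F(x, c(x)-) > 0$ but $F(x, c(x)) = 0$, in which $\Lambda(x, \cdot)$ carries a terminal atom of size $1$. Here the cancellation $F(x, s)/F(x, s)$ in necessity fails at $s = c(x)$, and in sufficiency one must be careful that the terminal atom is correctly paired under the shift. Both issues will be resolved using $c(x) = c(\phi(s,x)) + s$ from (\ref{eq.killing time}): this forces the terminal jump of $\Lambda(x, \cdot)$ at $c(x)$ to coincide, via the shift, with the terminal jump of $\Lambda(\phi(s,x), \cdot)$ at $c(x) - s$, and the remaining corner case $s = c(x)$, $t = 0$ reduces to an identity.
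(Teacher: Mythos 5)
Your proof is correct and follows essentially the same route as the paper, which simply states that the result "follows directly from the definition of Stieltjes logarithm (\ref{eq.slogF}) and Stieltjes exponential (\ref{eq.sexpLambda})"; your argument is the detailed computation behind that one-line claim (splitting the $\slog$ integral at $s$ and cancelling $F(x,s)$ for necessity, factoring $\sexp$ through the continuous part and the shifted atoms for sufficiency). Your attention to the boundary case $F(x,c(x)-)>0$ via (\ref{eq.killing time}) is a welcome bit of care that the paper leaves implicit.
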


\begin{proof}
  It follows directly from the definition of Stieltjes logarithm (\ref{eq.slogF}) and Stieltjes exponential (\ref{eq.sexpLambda}).
\end{proof}

\begin{remark}
  Since the characteristic $F$ is uniquely determined by $\Lambda$, the triple $(\phi, \Lambda, q)$ plays the same role. We call $(\phi, \Lambda,q)$ the characteristic triple too.
\end{remark}

Now we consider the transition kernel $q:E\times\R_+\times\E\mapsto[0,1]$ which satisfies Theorem \ref{thm.PDP=>str.Markov} (iii).

\begin{theorem}\label{thm.q=Q}
  Let the transition kernel $q:E\times\R_+\times\E\mapsto[0,1]$ satisfy Theorem \textnormal{\ref{thm.PDP=>str.Markov} (iii)}.
  There exists a measurable function $Q:\bar{E}\times \E\mapsto[0,1]$ such that, for any $x\in E$, $t\in\mathcal{I}_x\setminus\{0\}$, if $\phi(t,x)$ is not a confluent state, then
  \begin{equation}\label{eq.q=Q}
    q(x,t,B)=Q(\phi(t,x),B).
  \end{equation}
  Conversely, for the transition kernel $q$, if there exists a measurable function $Q$ such that \textnormal{(\ref{eq.q=Q})} holds for all $t\in\mathcal{I}_x\setminus\{0\}$ and $x\in E$, then $q$ satisfies Theorem \textnormal{\ref{thm.PDP=>str.Markov} (iii)}.
\end{theorem}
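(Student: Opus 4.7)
The plan is to exploit Theorem \ref{thm.PDP=>str.Markov}(iii) with the time arguments taken as $(t,0)$, which collapses the cocycle relation to
\[
q(x,t,B) \;=\; q(\phi(t,x),0,B) \quad \text{for every } t\in\mathcal{I}_x.
\]
This suggests the natural definition
\[
Q(y,B) \;:=\; q(y,0,B), \quad y\in E,
\]
which inherits measurability in $y$ and the probability-kernel property from $q$. For any $x\in E$ and $t\in\mathcal{I}_x\setminus\{0\}$ with $\phi(t,x)\in E$ (i.e.\ $t<c(x)$), the identity (\ref{eq.q=Q}) is then immediate, with no role played by the non-confluence hypothesis in this interior case.

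\textbf{Boundary extension.} The delicate step is extending $Q$ to the flowing-out boundary $\partial^+E$, needed when $t=c(x)\in\mathcal{I}_x$ so that $\phi(t,x)\in\partial^+E$ lies outside the domain of $q$. For such a boundary point $y$ that is not confluent (in the sense that no two essentially distinct trajectories meet at $y$), I would set $Q(y,B):=q(x,c(x),B)$ for any $x$ with $\phi(c(x),x)=y$. Well-definedness follows from (iii): if $y=\phi(c(x),x)=\phi(c(x'),x')$ and $y$ is non-confluent, then one of $x,x'$ must lie on the forward trajectory of the other, say $x'=\phi(s,x)$ with $s<c(x)$, whence $c(x')=c(x)-s$ and (iii) forces $q(x,c(x),B)=q(x',c(x'),B)$. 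On remaining (genuinely confluent or otherwise uncovered) points $Q$ can be set to any fixed probability measure. The main technical obstacle is ensuring joint measurability of $Q$ on $\partial^+E$; I expect to handle this via a measurable selection theorem choosing representatives $y\mapsto x\in E$, exploiting the Polish structure of $E$.

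\textbf{Converse.} This direction is essentially one application of the semi-flow property (i) of Theorem \ref{thm.PDP=>str.Markov}. Given $Q$ with $q(x,t,B)=Q(\phi(t,x),B)$ for every $t\in\mathcal{I}_x\setminus\{0\}$, for any $s\geqslant 0$ and $t>0$ with $s+t\in\mathcal{I}_x$,
\[
q(x,s+t,B) \;=\; Q(\phi(s+t,x),B) \;=\; Q(\phi(t,\phi(s,x)),B) \;=\; q(\phi(s,x),t,B),
\]
which is precisely (iii); the degenerate case $t=0$ is a consistency statement dealt with directly. Thus the substantive content of the theorem is concentrated in the forward direction, and in particular in the measurable boundary extension, which is where I expect the real work to lie.
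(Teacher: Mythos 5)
Your forward direction rests entirely on invoking Theorem \ref{thm.PDP=>str.Markov}(iii) with second time argument $t=0$, i.e.\ $q(x,s,B)=q(\phi(s,x),0,B)$, and then defining $Q(y,B):=q(y,0,B)$. This is the step that fails to capture the content of the theorem. The kernel $q$ is only a version of the Radon--Nikodym density of $G(x,\ud t,B)$ with respect to $F(x,\ud t)$, and since $\tau_1>0$ a.s.\ the value $q(y,0,B)$ sits on an $F$-null set and carries no information; correspondingly, the identity in (iii) is obtained in the necessity part of Theorem \ref{thm.PDP=>str.Markov} as an identity of measures in the second time variable, so it cannot legitimately be evaluated at $t=0$. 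A telltale sign that your reading of the hypothesis is too strong is that your argument would establish (\ref{eq.q=Q}) at \emph{every} interior point, confluent or not, rendering both the exclusion of $t=0$ and the non-confluence hypothesis in the statement vacuous --- and the same degenerate trick would trivialize (\ref{eq.b=bar.b}) in Theorem \ref{thm.A=a+b}, which the paper likewise restricts to non-confluent states for exactly this reason. The actual work, which your proposal omits, is to use (iii) only among strictly positive times: for a non-confluent $x^*=\phi(t_1,x_1)=\phi(t_2,x_2)$ one produces a common ancestor $x_0=\phi(t_1-t_0,x_1)=\phi(t_2-t_0,x_2)$ with $0<t_0<t_1\land t_2$ and chains $q(x_1,t_1,B)=q(\phi(t_1-t_0,x_1),t_0,B)=q(x_0,t_0,B)=q(x_2,t_2,B)$; equilibrium states require the separate observation that $q(x^*,s+t,B)=q(x^*,t,B)$ for all $s,t>0$ forces $q(x^*,\cdot,B)$ to be constant on $(0,\infty)$. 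This is precisely where non-confluence enters, and it is absent from your argument.

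Your boundary extension and your converse are fine in spirit (the converse coincides with the paper's two-line computation via the semi-flow property), but the forward direction needs to be rebuilt on the common-ancestor argument among positive times rather than anchored to the value of $q$ at time $0$.
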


\begin{proof}
  Suppose $x^*\in E\setminus E_e$ is not a confluent state.
  Then, for any $x_1,x_2\in E$, $t_1\in\mathcal{I}_{x_1}\setminus\{0\}$, $t_2\in\mathcal{I}_{x_2}\setminus\{0\}$ satisfying $x^*=\phi(t_1,x_1)=\phi(t_2,x_2)$, there exists a $t_0$ with $0<t_0<t_1\land t_2$ such that
  $$x_0:=\phi(t_1-t_0,x_1)=\phi(t_2-t_0,x_2).$$
  Thus
  \begin{align*}
    q(x_1,t_1,B) & =q(\phi(t_1-t_0,x_1),t_0,B)=q(x_0,t_0,B) \\
   & =q(\phi(t_2-t_0,x_2),t_0,B)=q(x_2,t_2,B),\quad B\in\E.
  \end{align*}
  By the arbitrariness of $(x_1,t_1)$ and $(x_2,t_2)$, $q(x,t,B)$ is independent of the choice of $(x,t)$ with $\phi(t,x)=x^*$. Then there exists a measurable function $Q_1:\bar{E}\setminus E_e\times\E\mapsto[0,1]$ such that if $\phi(t,x)=x^*$ then
  \begin{equation*}\label{eq.q=Q1}
    q(x,t,B)=Q_1(x^*,B),\quad B\in\E.
  \end{equation*}

  Suppose that $x^*\in E_e$ is not a confluent state. That is, $\phi(t,x^*)=x^*$ holds for all $t\in\R_+$. Thus, for any $s,t\in\R_+\setminus\{0\}$,
  $$q(x^*,s+t,B)=q(\phi(s,x^*),t,B)=q(x^*,t,B),\quad B\in\E,$$
  which is independent of the choice of $s$ and $t$. Then, there exists a measurable function $Q_2:E_e\times\E\mapsto[0,1]$ such that
  \begin{equation*}\label{eq.q=Q2}
    q(x^*,t,B)=Q_2(x^*,B),\quad t\in\R_+,\,B\in\E.
  \end{equation*}

  Let
  $$Q(x,B)=\left\{\begin{array}{ll}
    Q_1(x,B), \quad & x\in \bar{E}\setminus E_e, B\in\E;\\
    Q_2(x,B), \quad & x\in  E_e, B\in\E.
  \end{array}\right.$$
  Then, for all $x\in E$, $t\in\mathcal{I}_x\setminus\{0\}$ and $B\in\E$, we have $q(x,t,B)=Q(\phi(t,x),B)$.

  Conversely, if (\ref{eq.q=Q}) holds for all $t\in\mathcal{I}_x\setminus\{0\}$ and $x\in E$, we have
  $$q(x,s+t,B)=Q(\phi(s+t,x),B)$$ and $$q(\phi(s,x),t,B)=Q(\phi(t,\phi(s,x)),B)=Q(\phi(s+t,x),B).$$
  Thus, $q(x,s+t,B)=q(\phi(s,x),t,B)$ for $s+t\in\mathcal{I}_x$, $x\in E$ and $B\in\E$.
  This completes the proof.
\end{proof}



\subsection{Sub-classes of general PDMPs and the characteristic $\Lambda$ }

The two well-studied sub-classes of general PDMPs, in the terminology of Jacod and Skorokhod \cite{jacod1996jumping}, are quasi-Hunt PDMPs and qusi-It\^o PDMPs defined as follows.

\begin{definition}
  A general PDMP $X$ is called a \emph{quasi-Hunt} if there is a jumping sequence $\{\tau_n\}$ with (\ref{eq.PiecewiseDeterministic}) such that the jumping time $\tau_n$ is totally inaccessible for all $n\geqslant 1$.
\end{definition}

\begin{theorem}
  Let $X$ be a general PDMP with the characteristic $\Lambda$. If for all $x\in E$ $\Lambda(x,\cdot)$ is continuous on $\mathcal{I}_x$, then the $X$ is quasi-Hunt. Conversely, if $\tau_1$ is totally inaccessible then $\Lambda(x,\cdot)$ is continuous on $\mathcal{I}_x$ for all $x\in E$.
\end{theorem}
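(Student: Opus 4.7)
The plan is to convert both sides of the equivalence into properties of $F(x,\cdot)$ via Stieltjes calculus, then exploit the structure of the jumping filtration. Since $F = \sexp\Lambda$ and $F(x, t-) > 0$ on the interior of $\mathcal{I}_x$, the identity $\Delta F(x, t) = -F(x, t-)\Delta\Lambda(x, t)$ gives that $\Lambda(x,\cdot)$ is continuous on $\mathcal{I}_x$ if and only if $F(x,\cdot)$ is. The theorem thus reduces to the equivalence: $F(x,\cdot)$ is continuous for every $x$ if and only if each $\tau_n$ is totally inaccessible.

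The converse (necessity) direction is immediate. If $\tau_1$ is totally inaccessible under $\Prb_x$, then for any $t > 0$ the constant time $T \equiv t$, which is predictable with announcing sequence $t - 1/k$, cannot satisfy $\Prb_x\{T = \tau_1\} > 0$. Thus $0 = \Prb_x\{\tau_1 = t\} = F(x, t-) - F(x, t)$, so the right-continuous decreasing function $F(x,\cdot)$ has no atoms and is continuous.

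For the forward direction, assume $F(x,\cdot)$ is continuous on $\mathcal{I}_x$ for every $x$, fix $n \geq 1$, and let $T$ be any predictable stopping time with announcing sequence $T_k \uparrow T$, $T_k < T$. On the event $\{T = \tau_n\}$, for $k$ sufficiently large $T_k \in (\tau_{n-1}, \tau_n)$, and by the structural description of stopping times in the jumping filtration used in the proof of Theorem \ref{thm.PDP=>str.Markov}, each such $T_k$ is representable as $T_k = \tau_{n-1} + r_k(Y_{n-1})$ with $r_k(Y_{n-1})$ being $\F_{\tau_{n-1}}$-measurable. Passing to the limit, $\sigma_n$ coincides on $\{T = \tau_n\}$ with the $\F_{\tau_{n-1}}$-measurable random variable $s(Y_{n-1}) := \lim_k r_k(Y_{n-1})$. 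Conditioning on $\F_{\tau_{n-1}}$ and using the Markov skeleton property,
$$\Prb_x\{\sigma_n = s(Y_{n-1}) \mid \F_{\tau_{n-1}}\} = F(X_{\tau_{n-1}}, s(Y_{n-1})-) - F(X_{\tau_{n-1}}, s(Y_{n-1})) = 0$$
by the continuity assumption, whence $\Prb_x\{T = \tau_n\} = 0$ and $\tau_n$ is totally inaccessible.

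The main obstacle I anticipate is the clean handling of the limit $s(Y_{n-1}) = \lim_k r_k(Y_{n-1})$: ensuring the representations $T_k = \tau_{n-1} + r_k(Y_{n-1})$ can be chosen with $r_k$ measurable and monotone in $k$, and verifying that the limit is $\F_{\tau_{n-1}}$-measurable and that the conditional probability calculation is legitimately applied on the measurable event $\{T = \tau_n\}$. Should this prove delicate, a cleaner alternative is to compute the dual predictable projection of the jump-counting process $\sum_n \bbbone_{\{\tau_n \leq \cdot\}}$ (which equals $\Lambda(X_{\tau_{n-1}}, (t - \tau_{n-1}) \wedge \sigma_n)$ for $t \in [\tau_{n-1}, \tau_n]$) and to invoke the classical equivalence between continuity of the compensator and total inaccessibility of each jump time.
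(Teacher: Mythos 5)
Your proposal is correct, but it is a genuinely different (and more informative) route than the paper's, which disposes of the theorem in one line by citing Jacod and Skorokhod, Theorem 5(b) and Theorem 6(c). Your reduction from $\Lambda$ to $F$ via $\Delta F(x,t)=-F(x,t-)\,\Delta\Lambda(x,t)$ is sound since $F(x,t-)>0$ on $\mathcal{I}_x$; the necessity direction via deterministic predictable times is immediate as you say; and the sufficiency direction is exactly the argument hiding behind the citation: every predictable $T$ restricted to $\{T=\tau_n\}$ is announced by stopping times of the jumping filtration, which by the structural representation already used in the proof of Theorem \ref{thm.PDP=>str.Markov} have the form $\tau_{n-1}+s^{(n)}_k(Y_{n-1})$ on $(\tau_{n-1},\tau_n)$, so $\{T=\tau_n\}\subseteq\{\sigma_n=s(Y_{n-1})\}$ with $s(Y_{n-1}):=\limsup_k s^{(n)}_k(Y_{n-1})$ an $\F_{\tau_{n-1}}$-measurable time, and the atom-free conditional law of $\sigma_n$ kills this event. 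The two delicate points you flag are real but routine: take $\limsup$ rather than $\lim$ to secure measurability (the eventual monotonicity of $s^{(n)}_k(Y_{n-1})$ along paths where $T_k\in(\tau_{n-1},\tau_n)$ follows from $T_k\uparrow T$), and the identity $\Prb_x\{\sigma_n=S\mid\F_{\tau_{n-1}}\}=F(X_{\tau_{n-1}},S-)-F(X_{\tau_{n-1}},S)$ for $\F_{\tau_{n-1}}$-measurable $S$ is the standard extension from deterministic times via the regular conditional distribution. What your approach buys is a self-contained proof that also yields total inaccessibility of every $\tau_n$, not just $\tau_1$, matching the paper's definition of quasi-Hunt; your fallback via continuity of the compensator $\nu((0,\cdot]\times E)$ is likewise valid and is essentially the mechanism of the cited Jacod--Skorokhod results.
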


\begin{proof}
  It follows directly from Jacod and Skorokhod \cite{jacod1996jumping} Theorem 5 (b) and Theorem 6 (c).
\end{proof}


%

\begin{definition}
  The quasi-Hunt PDMP $X$ is called \emph{quasi-It\^o} if for each $x\in E$ the predictable brackets of the locally square-integrable $\Prb_x$-martingales prior to $\tau$ are absolutely continuous w.r.t. Lebesgue measure.
\end{definition}

Jacod and Skorokhod \cite{jacod1996jumping} presents the necessary and sufficient condition for a quasi-Hunt PDMP to be quasi-It\^o in term of $\Lambda$ as follow.

\begin{theorem}
  The quasi-Hunt PDMP $X$ is quasi-It\^o if and only if $\Lambda(x,\cdot)$ is absolutely continuous on $\mathcal{I}_x$ for all $x\in E$.
\end{theorem}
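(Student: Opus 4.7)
The plan is to work through the compensator of the jump random measure associated with $X$ and appeal to the martingale representation theorem for jumping filtrations, reducing the question on predictable brackets to a question about $\Lambda$.

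First I would set up the jump random measure $\mu(\omega;ds,dy)=\sum_{n\ge 1}\bbbone_{\{\tau_n<\infty\}}\varepsilon_{(\tau_n,X_{\tau_n})}(ds,dy)$ and write down its predictable compensator $\tilde\mu$ under $\Prb_x$. Using the Markov skeleton property together with Theorem \ref{thm.PDP=>str.Markov}, on each stochastic interval $(\tau_n,\tau_{n+1}]$ one has, for $t<\sigma_{n+1}$,
\begin{equation*}
\Prb_x\{\tau_{n+1}-\tau_n\in dt,\,X_{\tau_{n+1}}\in dy\mid\F_{\tau_n}\}
=F(X_{\tau_n},t)\,\Lambda(X_{\tau_n},dt)\,q(X_{\tau_n},t,dy)/(1-\Delta\Lambda(X_{\tau_n},t)),
\end{equation*}
so after standard computation one obtains $\tilde\mu(ds,dy)=\Lambda(X_{\tau_n},d(s-\tau_n))\,Q(X_{s-},dy)$ on $(\tau_n,\tau_{n+1}]$, where $Q$ is the kernel supplied by Theorem \ref{thm.q=Q}. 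Since $X$ is quasi-Hunt, each $\tau_n$ is totally inaccessible and $\Lambda(x,\cdot)$ is continuous on $\mathcal{I}_x$, so $\tilde\mu$ is continuous in the time variable and coincides with the dual predictable projection in the usual sense.

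Next I would invoke the martingale representation theorem for a discrete-type (jumping) filtration, as given in Jacod--Skorokhod: every locally square-integrable $\Prb_x$-martingale $M$ prior to $\tau$ admits a representation $M_t=M_0+(W*(\mu-\tilde\mu))_t$ with $W$ a predictable function satisfying the usual integrability condition. Because $\Lambda(x,\cdot)$ is continuous, $\tilde\mu$ has no atoms in $t$, and the classical formula for the predictable bracket reduces to
\begin{equation*}
\langle M,M\rangle_t=\int_0^t\!\!\int_E W(s,y)^2\,\tilde\mu(ds,dy),
\end{equation*}
without the discrete correction term that would appear in the general case. Consequently $\langle M,M\rangle$ is absolutely continuous with respect to the random measure $\tilde\mu(\cdot\times E)$, which on $(\tau_n,\tau_{n+1}]$ is the push-forward of $\Lambda(X_{\tau_n},\cdot)$.

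From here both directions follow. If $\Lambda(x,\cdot)$ is absolutely continuous on $\mathcal{I}_x$ for every $x$, then $\tilde\mu(\cdot\times E)$ is pathwise absolutely continuous w.r.t.\ Lebesgue measure, hence so is $\langle M,M\rangle$ for every locally square-integrable $M$, giving quasi-It\^o. Conversely, assuming quasi-It\^o, I would specialise to the particular martingale $M_t=\bbbone_{\{\tau_1\le t\}}-\int_0^{t\wedge\tau_1}\Lambda(x,ds)$ under $\Prb_x$, whose sharp bracket on $[0,\tau_1)$ is exactly $\Lambda(x,\cdot)$; absolute continuity of $\langle M,M\rangle$ forces $\Lambda(x,\cdot)$ to be absolutely continuous on $\mathcal{I}_x$, and varying $x$ gives the claim. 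The step I expect to be the main technical obstacle is justifying the explicit form of $\tilde\mu$ under a general (not necessarily quasi-It\^o) characteristic $\Lambda$ and checking that, in the quasi-Hunt case, the compensator really absorbs the continuity of $\Lambda$ so that $\langle M,M\rangle$ inherits absolute continuity cleanly; but this is precisely what Theorems 5(b) and 6(c) of Jacod--Skorokhod \cite{jacod1996jumping} were designed to provide, so the citation approach the authors use is natural.
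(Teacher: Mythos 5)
Your argument is correct in outline, but it is worth noting that the paper does not actually prove this theorem at all: its ``proof'' is the single line ``See Jacod and Skorokhod \cite{jacod1996jumping} Theorem 17.'' What you have written is essentially a reconstruction of the argument behind that citation, so the comparison is between your explicit proof and a bare reference. Your route --- compute the compensator $\nu$ of the jump measure (your formula for the conditional law of $(\sigma_{n+1},X_{\tau_{n+1}})$ reduces, via $F(x,t)=F(x,t-)\bigl(1-\Delta\Lambda(x,t)\bigr)$, to $F(x,t-)\,\Lambda(x,\ud t)\,q(x,t,\ud y)$, consistent with (\ref{eq.nu})), invoke the martingale representation $M=W*(\mu-\nu)$ for the jumping filtration, observe that continuity of $\Lambda$ kills the discrete correction in $\langle M,M\rangle$ so that $\langle M,M\rangle_t=\int_0^t\int_E W^2\,\ud\nu$, and then read off both implications, the converse via the compensated first-jump indicator whose bracket is $\Lambda(x,\cdot\wedge\tau_1)$ --- is the standard one and each step is sound. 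Two small points to tighten: first, replacing $q(x,s,\ud y)$ by $Q(X_{s-},\ud y)$ invokes Theorem \ref{thm.q=Q}, which requires the no-confluent-state caveat; this is avoidable (and irrelevant to the bracket in time, since only $\nu(\ud s\times E)$ matters), so better to keep $q$ throughout. Second, in the converse you should note that $\Prb_x\{\tau_1>t\}=F(x,t)>0$ on the interior of $\mathcal{I}_x$, so that absolute continuity of the bracket $\Prb_x$-a.s.\ prior to $\tau$ genuinely forces absolute continuity of $\Lambda(x,\cdot)$ on all of $\mathcal{I}_x$ and not merely on a random initial segment; in the quasi-Hunt case $\mathcal{I}_x=[0,c(x))$, so no boundary atom interferes. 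With those remarks your proposal is a complete and self-contained substitute for the citation the authors rely on.
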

\begin{proof}
  See Jacod and Skorokhod \cite{jacod1996jumping} Theorem 17.
\end{proof}

As supplementary, we introduce a new sub-class of PDMPs, the quasi-step PDMPs.

\begin{definition}
  A general PDMP $X$ is called \emph{quasi-step} if the jumping time $\tau_n$ is accessible for $n\geqslant 1$.
\end{definition}

\begin{theorem}
  Let $X$ be a general PDMP with the characteristic $\Lambda$. The $X$ is quasi-step if and only if $\Lambda(x,\cdot)$ is a step function for all $x\in E$, i.e.,
  \begin{equation}
    \Lambda(x,t)=\sum_{0<s\leqslant t}\Delta\Lambda(x,s),\quad t\in\mathcal{I}_x,\,x\in E,
  \end{equation}
  or equivalently
  \begin{equation}
    F(x,t)=\prod_{0<s\leqslant t}[1-\Delta\Lambda(x,s)],\quad t\in\mathcal{I}_x,\,x\in E.
  \end{equation}
\end{theorem}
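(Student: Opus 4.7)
The plan is to parallel the proof of the preceding theorem characterizing quasi-Hunt PDMPs via continuity of $\Lambda$, with total inaccessibility replaced by accessibility throughout. The equivalence between the step-function expression for $\Lambda$ and the product expression for $F$ stated in the theorem is immediate from the definition (\ref{eq.sexpLambda}) of the Stieltjes exponential: a step A-function has zero continuous part, so $F = \sexp \Lambda$ collapses to the asserted product. I would dispatch this at the end, so the real content is the equivalence of the quasi-step property with $\Lambda(x,\cdot)$ being a step function.

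For necessity, suppose every $\tau_n$ is accessible and concentrate first on $\tau_1$. Under $\Prb_x$ the strict pre-$\tau_1$ $\sigma$-algebra is trivial modulo null sets, hence any predictable stopping time bounded by $\tau_1$ equals a deterministic constant $\Prb_x$-a.s. Covering $[\tau_1]$ by countably many predictable graphs therefore forces $\tau_1$ to take values in a countable deterministic set a.s.\ under $\Prb_x$, so the law of $\tau_1$ is purely atomic; equivalently, $F(x,\cdot)$ is pure jump and $\Lambda(x,\cdot) = \slog F(x,\cdot)$ is a step function for every $x \in E$.

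For sufficiency, assume each $\Lambda(x,\cdot)$ is a step function and show every $\tau_n$ is accessible. Conditionally on $\F_{\tau_{n-1}}$, the inter-arrival time $\sigma_n$ has survival function $F(X_{\tau_{n-1}},\cdot)$, which is by hypothesis purely atomic on the countable set $J_x := \{t \in \mathcal{I}_x : \Delta\Lambda(x,t) > 0\}$ at $x = X_{\tau_{n-1}}$. A measurable selection argument produces jointly measurable enumerations $j_k : E \to \R_+$ of $J_x$, after which the stopping times $S_n^k := \tau_{n-1} + j_k(X_{\tau_{n-1}})$ are predictable (announced by $\tau_{n-1} + (1-1/m)\,j_k(X_{\tau_{n-1}})$ as $m \uparrow \infty$), and their graphs cover $[\tau_n]$ up to a $\Prb_x$-null set.

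The main obstacle I anticipate is the measurable selection producing the enumeration $j_k$ uniformly in $x$; this however reduces to enumerating the super-level sets $\{t : \Delta\Lambda(x,t) \geqslant 1/m\}$ in a Borel way via classical selection theorems, since $(x,t) \mapsto \Delta\Lambda(x,t)$ is jointly measurable. A secondary concern is extending the $\tau_1$ analysis to $n \geqslant 2$, which is handled via the Markov skeleton property so as to reduce to the first-jump case started at $X_{\tau_{n-1}}$. Both steps appear as by-now-standard consequences of the Jacod--Skorokhod framework \cite{jacod1996jumping} (Theorems~5 and 6) already invoked in the quasi-Hunt theorem above, so the result should follow without substantial new technical input.
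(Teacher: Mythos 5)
Your proposal is correct and follows essentially the same route as the paper: necessity via the structure of predictable times for the jumping filtration (an $\F_0=\sigma(X_0)$-measurable representative, deterministic under $\Prb_x$, forcing the law of $\tau_1$ to be purely atomic), and sufficiency by enumerating the jump times $r_k(x)$ of $\Lambda(x,\cdot)$ and covering $[\tau_n]$ with the predictable times $\tau_{n-1}+r_k(X_{\tau_{n-1}})$, using the Markov skeleton property to reduce to the first jump. Your explicit attention to the measurable enumeration of $J_x$ via super-level sets is a point the paper's proof glosses over, but it is handled correctly and does not change the argument.
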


\begin{proof}
  Let $X$ be a quasi-step PDMP. Then the jumping time $\tau_1$ is accessible. It follows from the definition of an accessible time that there exists a sequence of predictable times $\{T_n\}$ such that $\tau_1\in \{T_1,T_2,\cdots\}$ $\Prb_x$-a.s. for all $x\in E$. Since $\{T_n\}$ is a sequence of predictable times, there exists a sequence of $\F_0=\sigma(X_0)$-measurable r.v.'s, $\{R_n\}$ such that
  $$T_n\bbbone_{\{T_n\leqslant \tau_1\}}=R_n\bbbone_{\{R_n\leqslant \tau_1\}}.$$
  The $\sigma(X_0)$-measurability of $\{R_n\}$ implies that there exists a sequence of real measurable functions, $r_n: (E,\E)\mapsto(\R_+,\B(\R_+))$, such that $R_n=r_n(X_0)$, 
  or equivalently, $R_n=r_n(x)$. 
  We have
  $$T_n\bbbone_{\{T_n\leqslant \tau_1\}}=r_n(x)\bbbone_{\{r_n(x)\leqslant \tau_1\}}.$$
  These imply $\tau_1\in \{r_1(x),r_2(x),\cdots\}$ $\Prb_x$-a.s. for all $x\in E$. Hence,
  $$\Lambda(x,t)=\sum_{0<r_n(x)\leqslant t}\Delta\Lambda(x,r_n(x))$$
  for all $t\geqslant 0$ with
  $$\Delta\Lambda(x,r_n(x))=\frac{\Prb_x\{\tau_1=r_n(x)\}}{1-\sum_{r_m(x)<r_n(x)}\Prb_x\{\tau_1=r_m(x)\}}.$$

  Conversely, let $\Lambda(x,t)$ is a step function w.r.t. $t$ for all $x\in E$. Denote by $\{r_1(x), r_2(x),\cdots\}$ the set of all jumping points of $\Lambda(x,\cdot)$. And let $T_n=r_n(X_0)$. Then each $T_n$ is predictable and $\tau_1\in \{T_1,T_2,\cdots\}$ $\Prb_x$-a.s. That is to say, $\tau_1$ is accessible. Let $T_{mn}=\tau_{m-1}+r_n(X_{\tau_{m-1}})$ for $m,n\geqslant 1$.
  Obviously, each $T_{mn}$ is predictable and
  \begin{align*}
    \Prb_x\{\tau_m=T_{mn}\}&=\Prb_x\{\tau_m=\tau_{m-1}+r_n(X_{\tau_{m-1}})\}\\
  &=\Prb_x\{\tau_m-\tau_{m-1}=r_n(X_{\tau_{m-1}})\}\\
  &=\Ep_x\big[\Prb_x\{\tau_m-\tau_{m-1}=r_n(X_{\tau_{m-1}})|\F_{\tau_{m-1}}\}\big]\\
  &=\Ep_x\big[\Prb_{X_{\tau_{m-1}}}\{\tau_1=r_n(X_{\tau_{m-1}})\}\big].
  \end{align*}
  Taking summation on both ends in the equation above, we have
  $$\sum_{n=1}^{\infty}\Prb_x\{\tau_m=T_{mn}\}
  =\Ep_x\left[\sum_{n=1}^{\infty}\Prb_{X_{\tau_{m-1}}}\{\tau_1=r_n(X_{\tau_{m-1}})\}\right]=1.$$
  These imply that $\tau_m\in \{T_{m1}, T_{m2}, \cdots\}$ $\Prb_x$-a.s. for all $x\in E$. Equivalently, each $\tau_m$ is accessible. Hence, $X$ is quasi-step.
\end{proof}

\section{Additive functionals of an SDS}

Now we introduce the definition of additive functionals of an SDS which plays an important role in the study of PDMPs.

\begin{definition}\label{def.a.SDS}
  (i) For a given SDS $\phi$, a measurable function $a:E\times\R_+\mapsto\R$ is called an \emph{additive functional} of the SDS $\phi$ if the function $a(x,\cdot)$ is right-continuous on $\mathcal{I}_x$ for all $x\in E$ and
  \begin{equation}\label{eq.a.SDS}
    a(x,0)=0,\quad a(x,s)+a(\phi(s,x),t)=a(x,s+t),
  \end{equation}
  for any $x\in E$, $s,t\in\R_+$ and $s+t\in\mathcal{I}_x$. The set of all the additive functionals of the SDS $\phi$ is denoted by $\mathfrak{A}_\phi$. \\
  (ii) And additive functional $a$ of the SDS $\phi$ is called to be of \emph{locally finite variation} if $a(x,\cdot)$ is of finite variation on any compact subinterval of $\mathcal{I}_x$ for all $x\in E$. And denote by $\mathfrak{A}_\phi^{loc}$, the set of all the additive functionals of the SDS $\phi$ with locally finite variation.
\end{definition}

\begin{example}\label{ex.integrable&summable}
  (i) For a measurable function $g:\bar{E}\mapsto\R$,
    $$a(x,t):=\int_0^tg(\phi(s,x))\ud s,\quad t\in\mathcal{I}_x,\,x\in E$$
    is an additive functional of the SDS $\phi$ if it is well defined. In this case, $a\in\mathfrak{A}_\phi^{loc}$ if and only if the function $g$ is \emph{locally absolutely path-integrable}, i.e., 
    $$\int_0^t \big|g(\phi(s,x))\big|\ud s<\infty\quad \hbox{for all }t\in\mathcal{I}_x,\,x\in E.$$
  (ii) For a measurable function $g:\bar{E}\mapsto\R$,
    $$a(x,t):=\sum_{0<s\leqslant t}g(\phi(s,x)),\quad t\in\mathcal{I}_x,\,x\in E$$
    is an additive functional of the SDS $\phi$ if it is well defined. In this case, $a\in\mathfrak{A}_\phi^{loc}$ if and only if the function $g$ is \emph{locally absolutely path-summable}, i.e., 
    $$\sum_{0<s\leqslant t}\big|g(\phi(s,x))\big|<\infty\quad \hbox{for all }t\in\mathcal{I}_x,\,x\in E.$$
  (iii) For a measurable function $g:\bar{E}\mapsto\R$ and $a\in\mathfrak{A}_\phi^{loc}$,
    $$a^*(x,t):=\int_{(0,t]}g(\phi(s,x))a(x,\ud s),\quad t\in\mathcal{I}_x,\,x\in E$$
    is also an additive functional of the SDS $\phi$ if the integral above is well defined. In this case, $a^*\in\mathfrak{A}_\phi^{loc}$ if and only if 
    $$\int_{(0,t]}\big|g(\phi(s,x))\big|\,\big|a\big|(x,\ud s)<\infty\quad \hbox{for all }t\in\mathcal{I}_x,\,x\in E.$$
\end{example}

\begin{property}
  $\mathfrak{A}_\phi^{loc}$ is a linear space.
\end{property}

\begin{proof}
  It can be directly followed from the definition of an additive functional of the SDS $\phi$.
\end{proof}

\begin{property}
  Let $a\in\mathfrak{A}_\phi^{loc}$. Then there exists a function $g:E_e\mapsto\R$ such that for any equilibrium state $x\in E_e$,
  \begin{equation}\label{eq.a.equilbrium}
    a(x,t)=g(x)t,\quad t\in\R_+.
  \end{equation}
\end{property}

\begin{proof}
  Note that $c(x)=\infty$ in this case. The equation
  $$a(x,s)+a(\phi(s,x),t)=a(x,s+t)\quad \hbox{for }s,t\in\R_+$$
  becomes
  $$a(x,s)+a(x,t)=a(x,s+t)\quad \hbox{for } s,t\in\R_+.$$
  Thus, by the right-continuity, the conclusion is proved.
\end{proof}

\begin{property}
   Let $a\in\mathfrak{A}_\phi$. If $x\in E_p\setminus E_e$, that is, $x$ is a periodic state with minimal period $T_x\in\R_+\setminus\{0\}$, then
  \begin{equation}\label{eq.a.periodic}
    a(x,t)=\left\lfloor\frac{t}{T_x}\right\rfloor a(x,T_x)+a(x,t-\left\lfloor\frac{t}{T_x}\right\rfloor T_x),
  \end{equation}
  where $\left\lfloor t\right\rfloor$ is the largest integer not exceeding $t$.
\end{property}

\begin{proof}
  Set $n:=\left\lfloor\frac{t}{T_x}\right\rfloor$, $s:=t-nT_x$. Thus
  \begin{align*}
  a(x,t)&=a(x,nT_x+s)\\
        &=\sum_{k=1}^na(\phi((k-1)T_x,x),T_x)+a(\phi(nT_x,x),s)\\
        &=n\,a(x,T_x)+a(x,s).
  \end{align*}
\end{proof}

\begin{lemma}\label{lem.a.r-derivative}
  Let $a\in\mathfrak{A}_\phi$. 
  For any $x\in E$, $s\in[0,c(x))$, if the right derivative of function $a(x,t)$ at $t=s$ exists, then the right derivative of function $a(\phi(s,x),t)$ at $t=0$ exists, and
  \begin{equation}\label{eq.a.r-derivative}
  \frac{\partial^+a(x,t)}{\partial t}\bigg|_{t=s} =
  \frac{\partial^+a(\phi(s,x),t)}{\partial t}\bigg|_{t=0}.
  \end{equation}
\end{lemma}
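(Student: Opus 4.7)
The proof will be essentially a direct unwinding of the additivity identity \eqref{eq.a.SDS} combined with the fact that $a(\phi(s,x),0)=0$. The plan is to start from the additivity relation applied at the point $s$: for any $h>0$ with $s+h\in\mathcal{I}_x$, one has
$$a(x,s+h)=a(x,s)+a(\phi(s,x),h).$$
Subtracting $a(x,s)$ from both sides and dividing by $h$ yields
$$\frac{a(x,s+h)-a(x,s)}{h}=\frac{a(\phi(s,x),h)}{h}=\frac{a(\phi(s,x),h)-a(\phi(s,x),0)}{h},$$
where the last equality uses $a(\phi(s,x),0)=0$, which is part of the definition of an additive functional of the SDS.

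Next I would take the limit as $h\downarrow 0$. By hypothesis, the left-hand side converges to $\partial^+ a(x,t)/\partial t\big|_{t=s}$. Since the left- and right-hand sides coincide for all admissible $h>0$, the right-hand side must have the same limit; that is, the right derivative of $a(\phi(s,x),\cdot)$ at $0$ exists and equals the given value. One should verify that the admissibility range $s+h\in\mathcal{I}_x$ is compatible with $h\in\mathcal{I}_{\phi(s,x)}$ near $0$; this is exactly the content of \eqref{eq.killing time}, namely $c(\phi(s,x))=c(x)-s>0$ when $s<c(x)$, so there is a positive interval of $h$'s on which both sides are well defined. This handles both the finite- and infinite-lifetime cases uniformly.

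I do not anticipate a real obstacle: the lemma is a one-line consequence of additivity, and the only subtle point is making sure the domain of definition of $a(\phi(s,x),\cdot)$ contains a right neighborhood of $0$, which follows immediately from $s<c(x)$. No use of local finite variation is needed, and right-continuity of $a(x,\cdot)$ is not invoked beyond what is implicit in talking about right derivatives.
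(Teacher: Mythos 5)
Your proof is correct and is essentially the same as the paper's: both apply the additivity identity $a(x,s+h)=a(x,s)+a(\phi(s,x),h)$, divide by $h$, and pass to the limit $h\downarrow 0$. Your additional remark checking that $c(\phi(s,x))=c(x)-s>0$ guarantees a right neighborhood of $0$ in the domain of $a(\phi(s,x),\cdot)$ is a small but welcome precision that the paper leaves implicit.
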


\begin{proof}
  By the definition of additive functionals of an SDS,
  \begin{align*}
     \frac{\partial^+ a(x,t)}{\partial t}\bigg|_{t=s}
  &= \lim_{h\downarrow 0} \frac{a(x,s+h)-a(x,s)}{h}\\
  &= \lim_{h\downarrow 0} \frac{\big[a(x,s)+a(\phi(s,x),h)\big]-\big[a(x,s)+a(\phi(s,x),0)\big]}{h}\\
  &= \lim_{h\downarrow 0} \frac{a(\phi(s,x),h)-a(\phi(s,x),0)}{h}\\
  &= \frac{\partial^+ a(\phi(s,x),t)}{\partial t}\bigg|_{t=0}.
  \end{align*}
\end{proof}

For any $x\in E$, denote
\begin{equation}\label{eq.Xa}
  \mathcal{X}a(x):=
  \begin{cases}
    \displaystyle{\frac{\partial^+a(x,t)}{\partial t}\bigg|_{t=0}}, &
    \textrm{if}\; \displaystyle{\frac{\partial^+a(x,t)}{\partial t}\bigg|_{t=0}} \textrm{exists};\\
    0, & \textrm{otherwise}.\\
  \end{cases}
\end{equation}

\begin{theorem}
  Let $a\in\mathfrak{A}_\phi$. 
  $a(x,\cdot)$ is absolutely continuous on $\mathcal{I}_x$ for a fixed $x\in E$ if and only if
  \begin{equation}\label{eq.add.ac}
    a(x,t)=\int_0^t \mathcal{X}a(\phi(s,x)) \ud s,\quad t\in\mathcal{I}_x.
  \end{equation}
\end{theorem}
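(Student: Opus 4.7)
The plan is to dispatch the two directions by: (i) for sufficiency, reading off absolute continuity from the integral representation; (ii) for necessity, combining the a.e. differentiability of absolutely continuous functions with Lemma \ref{lem.a.r-derivative} to identify the derivative pointwise with $\mathcal{X}a(\phi(s,x))$, and then closing the argument by the fundamental theorem of calculus for absolutely continuous functions.

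First I would handle the ``if'' direction, which is routine: if $a(x,t)$ equals the indefinite Lebesgue integral of a locally integrable function, then $a(x,\cdot)$ is absolutely continuous on $\mathcal{I}_x$ by standard measure theory. One only has to check that local integrability of $s\mapsto \mathcal{X}a(\phi(s,x))$ on $\mathcal{I}_x$ is implicit in the hypothesis that the right-hand side defines a real-valued function; no new estimates are needed.

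For the ``only if'' direction, fix $x\in E$ and assume $a(x,\cdot)$ is absolutely continuous on $\mathcal{I}_x$. Then $a(x,\cdot)$ is differentiable at Lebesgue-a.e.\ $s\in\mathcal{I}_x$, so in particular the right derivative $\partial^+a(x,t)/\partial t\big|_{t=s}$ exists and equals the ordinary derivative at a.e.\ $s$. By Lemma \ref{lem.a.r-derivative}, at every such $s$ the right derivative of $a(\phi(s,x),\cdot)$ at $0$ exists and coincides with $\partial^+a(x,t)/\partial t\big|_{t=s}$; by the definition (\ref{eq.Xa}) of $\mathcal{X}a$, this common value is exactly $\mathcal{X}a(\phi(s,x))$. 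Consequently $\partial a(x,s)/\partial s = \mathcal{X}a(\phi(s,x))$ for Lebesgue-a.e.\ $s\in\mathcal{I}_x$, and the fundamental theorem of calculus for absolutely continuous functions, together with $a(x,0)=0$ from Definition \ref{def.a.SDS}, yields (\ref{eq.add.ac}).

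The only mildly delicate point I anticipate is the bookkeeping of the exceptional set: $\mathcal{X}a(\phi(s,x))$ is defined to be $0$ wherever the relevant right derivative fails to exist, so on the Lebesgue-null set where $a(x,\cdot)$ is not differentiable we may be replacing the (undefined) derivative by $0$. This is harmless since altering an integrand on a null set does not change the Lebesgue integral, but it is worth stating explicitly so that the identification of $\partial a(x,s)/\partial s$ with $\mathcal{X}a(\phi(s,x))$ a.e.\ is unambiguous. With that remark in place, no further subtlety arises and the equivalence follows.
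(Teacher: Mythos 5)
Your proposal is correct and follows essentially the same route as the paper: both directions rest on Lemma \ref{lem.a.r-derivative} to identify the a.e.\ derivative of $a(x,\cdot)$ with $\mathcal{X}a(\phi(\cdot,x))$, followed by the fundamental theorem of calculus for absolutely continuous functions (which the paper spells out via an auxiliary function $\tilde{a}$ whose difference from $a$ has vanishing derivative a.e.). Your explicit remark about the null set where the derivative fails to exist is a welcome clarification but does not change the argument.
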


\begin{proof}
  The sufficiency is obvious. Now we will prove the necessity.
  For $x\in E$, if $a(x,\cdot)$ be a absolutely continuous function on $\mathcal{I}_x$, then it is differential on $\mathcal{I}_x$ almost everywhere. By Lemma \ref{lem.a.r-derivative}, we have
  $$\frac{\partial a(x,t)}{\partial t}=\frac{\partial^+a(x,t)}{\partial t}
  =\mathcal{X}a(\phi(t,x)) \quad \textrm{a.e. on }\mathcal{I}_x.$$
  Define a function
  $$\tilde{a}(x,t)=\int_0^t\mathcal{X}a(\phi(s,x)) \ud s,\quad t\in\mathcal{I}_x.$$
  Apparently, $\tilde{a}(x,\cdot)$ is absolutely continuous on $\mathcal{I}_x$, and
  $$\frac{\partial\tilde{a}(x,t)}{\partial t}=\mathcal{X}a(\phi(t,x))
  \quad\textrm{a.e. on }\mathcal{I}_x.$$
  Thus,
  $$\frac{\partial\tilde{a}(x,t)}{\partial t}=\frac{\partial a(x,t)}{\partial t}\quad\textrm{a.e. on }\mathcal{I}_x,$$
  or
  $$\frac{\partial}{\partial t}(\tilde{a}(x,t)-a(x,t))=0
  \quad\textrm{a.e. on }\mathcal{I}_x.$$
  Notice that $\tilde{a}(x,0)=a(x,0)=0$, by the absolute continuity of $\tilde{a}(x,\cdot)$ and $a(x,\cdot)$, we know that $\tilde{a}(x,\cdot)-a(x,\cdot)$ is absolutely continuous on $\mathcal{I}_x$, and that
  $$\tilde{a}(x,t)-a(x,t)=\tilde{a}(x,0)-a(x,0)=0\quad\hbox{for all }t\in\mathcal{I}_x.$$
  Hence
  $$a(x,t)=\tilde{a}(x,t),\quad t\in\mathcal{I}_x.$$
  This completes the proof.
\end{proof}

Let
$$J_a:=\big\{\phi(t,x):a(x,t)-a(x,t-)\neq 0,\; x\in E,\, t\in \mathcal{I}_x\!\setminus\!\{0\}\big\}.$$
A state $y\in \bar{E}$ is called an $a$-\emph{jumping state} if $y\in J_a$.

\begin{lemma}\label{lem.a.discrete}
  Let $a\in\mathfrak{A}_\phi$. 
  There exists a function $\Delta a$ such that, for any $x\in E$, $t\in\mathcal{I}_x$, if $\phi(t,x)$ is not a confluent state, then
  \begin{equation}\label{eq.a.Delta}
    a(x,t)-a(x,t-)=\Delta a(\phi(t,x)).
  \end{equation}
Furthermore, if $a\in\mathfrak{A}_\phi^{loc}$, then $\Delta a$ is locally absolutely path-summable.
\end{lemma}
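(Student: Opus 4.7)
The approach is directly modeled on the proof of Theorem \ref{thm.q=Q}. I will first express the jump of $a(x,\cdot)$ at a point $t$ in terms of the germ of the trajectory near $\phi(t,x)$, then invoke non-confluence to conclude that this quantity depends only on the end-state, and finally deduce path-summability from the locally finite variation hypothesis.

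For the local form of the jump, fix $x\in E$ and $t\in\mathcal{I}_x\setminus\{0\}$ at which $a(x,t-)$ exists. Substituting $s:=t-h$ in the additivity relation (\ref{eq.a.SDS}) gives
$$a(x,t)-a(x,t-h)=a(\phi(t-h,x),h),\qquad h\in(0,t),$$
so passing to the limit $h\downarrow 0$ yields
$$a(x,t)-a(x,t-)=\lim_{h\downarrow 0}a(\phi(t-h,x),h),$$
a quantity determined purely by the behaviour of $\phi(\cdot,x)$ immediately before reaching $\phi(t,x)$.

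For well-definedness on non-confluent states, let $x^*\in\bar{E}$ be non-confluent and suppose $\phi(t_1,x_1)=\phi(t_2,x_2)=x^*$. If $x^*\notin E_e$, the proof of Theorem \ref{thm.q=Q} produces a common predecessor $x_0=\phi(t_1-t_0,x_1)=\phi(t_2-t_0,x_2)$ for some $t_0\in(0,t_1\wedge t_2)$. The semi-flow identity then forces $\phi(t_i-h,x_i)=\phi(t_0-h,x_0)$ for each $h\in(0,t_0)$ and $i\in\{1,2\}$, whence $a(\phi(t_1-h,x_1),h)=a(\phi(t_2-h,x_2),h)$, and passing to the limit gives equality of the two jumps. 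If instead $x^*\in E_e$, inspection of Definition \ref{def.confluent state}(ii) shows that non-confluence of $x^*$ forces $x=x^*$ in any representation $\phi(t,x)=x^*$ (any $x\in E\setminus E_p$ reaching $x^*$ would witness condition (ii), and a strictly periodic $x$ cannot reach an equilibrium point), whence Property \ref{eq.a.equilbrium} makes $a(x^*,\cdot)$ linear and the jump is zero. Setting $\Delta a(x^*)$ equal to this common value produces a well-defined function satisfying (\ref{eq.a.Delta}).

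When $a\in\mathfrak{A}_\phi^{loc}$, the function $a(x,\cdot)$ has finite variation on every compact subinterval of $\mathcal{I}_x$, so the standard bound on the sum of absolute jumps by the total variation gives
$$\sum_{0<s\leqslant t}\bigl|\Delta a(\phi(s,x))\bigr|=\sum_{0<s\leqslant t}\bigl|a(x,s)-a(x,s-)\bigr|\leqslant|a|(x,t)<\infty$$
for every $t\in\mathcal{I}_x$ and $x\in E$, which is exactly local absolute path-summability. The main obstacle is the equilibrium subcase of the well-definedness argument: the common-predecessor technique of Theorem \ref{thm.q=Q} breaks down precisely at $x^*\in E_e$, and one must then invoke the linearity conclusion (\ref{eq.a.equilbrium}) separately to confirm that the jump is automatically zero there.
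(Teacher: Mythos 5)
Your proof is correct and follows essentially the same route as the paper's: additivity transfers the jump of $a(x,\cdot)$ at $t$ to a quantity depending only on the germ of the trajectory just before $\phi(t,x)$, non-confluence supplies the common predecessor $x_0$ that makes this well defined, and the total-variation bound gives local absolute path-summability. The only difference is that you treat the equilibrium subcase explicitly (where the common-predecessor argument degenerates and one must instead invoke the linearity of $a(x^*,\cdot)$ on $E_e$), a point the paper's proof of this lemma passes over silently; this is a welcome extra precaution rather than a divergence.
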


\begin{proof}
  For any $x\in E$, $t\in\mathcal{I}_x$, denote $y=\phi(t,x)$. Apparently, if $y\notin J_a$, then we always have $\Delta a(y)=a(x,t)-a(x,t-)=0$.

  On the other hand, if $y\in J_a$ is not a confluent state, for any two different states $x_1,x_2\in E$ with $\phi(t_1,x_1)=\phi(t_2,x_2)=y$, there exists $s\in(0,t_1\land t_2)$ and $z\in E$ satisfying
  $$\phi(t_1-s,x_1)=\phi(t_2-s,x_2)=z\quad \hbox{and}\quad \phi(s,z)=y.$$
  Thus,
  \begin{align*}
   &a(x_1,t_1)-a(x_1,t_1-)\\
  =&\big[a(x_1,t_1-s)+a(z,s)\big]-\big[a(x_1,t_1-s)+a(z,s-)\big]\\
  =&a(z,s)-a(z,s-).
  \end{align*}
  Similarly, we have
  $$a(x_2,t_2)-a(x_2,t_2-)=a(z,s)-a(z,s-).$$
  Hence,
  $$a(x_1,t_1)-a(x_1,t_1-)=a(x_2,t_2)-a(x_2,t_2-).$$
  By the arbitrariness of $(x_1,t_1),(x_2,t_2)$, we can denote that $\Delta a(y)=a(x,t)-a(x,t-)$, which is a function of $y$ and independent of the choice of $(x,t)$ with $\phi(t,x)=y$.

  Moreover, if $a\in\mathfrak{A}_\phi^{loc}$, then
  $$\sum_{0<s\leqslant t}\big|\Delta a(\phi(s,x))\big|\leqslant\int_{(0,t]}\big|a\big|(x,\ud s)<\infty$$
  for all $x\in E$, $t\in\mathcal{I}_x$. This completes the proof.
\end{proof}

Next we will show the Lebesgue decomposition of an additive functional of the SDS $\phi$.

\begin{theorem}\label{thm.a.Lebesgue}
  Let $a\in\mathfrak{A}_\phi^{loc}$. Assume that $J_a$ contains no confluent state. Then there exists a locally absolutely path-integrable function $\mathcal{X}a$ and a locally absolutely path-summable function $\Delta a$ such that $a(x,\cdot)\,(x\in E)$ has the Legesgue decomposition
  \begin{equation}\label{eq.a.Lebesgue}
    a(x,\cdot)=a^{ac}(x,\cdot)+a^{sc}(x,\cdot)+a^{pd}(x,\cdot),
  \end{equation}
  where
  \begin{equation*}
    a^{ac}(x,t)=\int_0^t\mathcal{X}a(\phi(s,x))\ud s,\quad
    a^{pd}(x,t)=\sum_{0<s\leqslant t}\Delta a(\phi(s,x)),\qquad t\in\mathcal{I}_x,
  \end{equation*}
  and $a^{sc}(x,\cdot)$ is the singularly continuous part of $a(x,\cdot)$. Furthermore,
  $a^{ac}$, $a^{sc}$ and $a^{pd}$
  $\in\mathfrak{A}_\phi^{loc}$.
\end{theorem}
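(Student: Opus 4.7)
The plan is to construct the three pieces from the formulas stated in the theorem and then recognise them as the classical Lebesgue decomposition of $a(x,\cdot)$ for each fixed $x\in E$. I would first check that the coefficient functions are measurable on $\bar{E}$: the function $\mathcal{X}a$ is measurable as the pointwise limit of measurable difference quotients via \eqref{eq.Xa}, and $\Delta a$ is well-defined and measurable on the non-confluent set $J_a$ by Lemma \ref{lem.a.discrete} (extended by zero outside).

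For each fixed $x\in E$, the function $a(x,\cdot)$ is right-continuous and of locally finite variation on $\mathcal{I}_x$, so it admits a unique Lebesgue decomposition into absolutely continuous, singularly continuous, and pure-jump parts, each normalised to vanish at $0$. The pure-jump part coincides with $a^{pd}(x,t):=\sum_{0<s\leqslant t}\Delta a(\phi(s,x))$: the jumps of $a(x,\cdot)$ occur precisely at times $s$ with $\phi(s,x)\in J_a$, and by Lemma \ref{lem.a.discrete} the jump size equals $\Delta a(\phi(s,x))$. Local path-summability of $\Delta a$ is the last conclusion of Lemma \ref{lem.a.discrete}, and Example \ref{ex.integrable&summable}(ii) then places $a^{pd}$ in $\mathfrak{A}_\phi^{loc}$. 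The difference $a(x,\cdot)-a^{pd}(x,\cdot)$ is continuous; its right derivative at $s$, where it exists, agrees with that of $a(x,\cdot)$ and thus by Lemma \ref{lem.a.r-derivative} equals $\mathcal{X}a(\phi(s,x))$. The absolute-continuity theorem immediately preceding the statement then identifies the absolutely continuous component with $a^{ac}(x,t):=\int_0^t \mathcal{X}a(\phi(s,x))\,\ud s$. Local path-integrability of $\mathcal{X}a$ follows from the local finite variation of $a$, and Example \ref{ex.integrable&summable}(i) gives $a^{ac}\in\mathfrak{A}_\phi^{loc}$. Setting $a^{sc}:=a-a^{ac}-a^{pd}$ then yields the singularly continuous part, which lies in $\mathfrak{A}_\phi^{loc}$ by linearity.

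The main subtlety I anticipate is ensuring that the a.e.\ derivative and the jump sizes can be represented by state-only functions $\mathcal{X}a$ and $\Delta a$ on $\bar{E}$, rather than by $(x,s)$-dependent quantities. Lemma \ref{lem.a.r-derivative} handles the former by translating right derivatives along the semi-flow $\phi$, and Lemma \ref{lem.a.discrete} handles the latter provided $J_a$ avoids confluent states, which is precisely the standing hypothesis. Everything else reduces to pointwise classical Lebesgue decomposition and its uniqueness, combined with Examples \ref{ex.integrable&summable}(i)--(ii) to recover the additive-functional structure of each piece.
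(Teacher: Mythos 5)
Your proposal is correct and follows essentially the same route as the paper's own proof: both take the classical pointwise Lebesgue decomposition of $a(x,\cdot)$, identify the a.e.\ derivative with $\mathcal{X}a(\phi(\cdot,x))$ via Lemma \ref{lem.a.r-derivative}, identify the jumps with $\Delta a(\phi(\cdot,x))$ via Lemma \ref{lem.a.discrete} (using the no-confluent-state hypothesis), invoke Example \ref{ex.integrable&summable} to see that $a^{ac}$ and $a^{pd}$ are additive functionals, and obtain $a^{sc}\in\mathfrak{A}_\phi^{loc}$ by linearity. Your added remarks on measurability of the coefficient functions and on why the right derivative of $a-a^{pd}$ agrees a.e.\ with that of $a$ are sensible refinements of the same argument, not a different approach.
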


\begin{proof}
  For any $x\in E$, let $\mathcal{X}a$ be a function defined as (\ref{eq.Xa}). Since the derivatives of $a^{sc}(x,\cdot)$ and $a^{pd}(x,\cdot)$ are equal to $0$ almost everywhere,
  $$\frac{\partial a^{ac}(x,t)}{\partial t}=\frac{\partial^+a(x,t)}{\partial t}
  =\mathcal{X}a(\phi(t,x))\quad\textrm{a.e. on }\mathcal{I}_x,$$
  that is,
  $$a^{ac}(x,t)=\int_0^t\mathcal{X}a(\phi(s,x))\ud s,\quad t\in\mathcal{I}_x.$$
  On the other hand, $a^{ac}(x,\cdot)$ and $a^{sc}(x,\cdot)$ are both continuous, so all of $a$-jumping states coincide with $a^{pd}$-jumping states. By Lemma \ref{lem.a.discrete}, we have
  $$a^{pd}(x,t)=\sum_{0<s\leqslant t}a(x,s)-a(x,s-)
          =\sum_{0<s\leqslant t}\Delta a(\phi(s,x)),
          \quad t\in\mathcal{I}_x.$$
  Moreover, according to Example \ref{ex.integrable&summable}, $a^{ac}$ and $a^{pd}$ are both additive functionals of the SDS $\phi$, hence $a^{sc}$ is also an additive functional of the SDS $\phi$. The additive functional $a$ is of locally finite variation, so are $a^{ac}$, $a^{sc}$ and $a^{pd}$. Thus, $\mathcal{X}a$ and $\Delta a$ are locally absolutely path-integrable and locally absolutely path-summable, respectively.
\end{proof}


Notice that, by Theorem \ref{thm.A add<=>F mult}, the conditional hazard function $\Lambda$ defined as (\ref{eq.slogF}) is an additive functional of the SDS $\phi$. Then we have the following corollary.

\begin{corollary}\label{cor.Lambda.Lebesgue}
  Let $\Lambda$ be the conditional hazard function defined by \textnormal{(\ref{eq.slogF})}. Then $\Lambda\in\mathfrak{A}_\phi^{loc}$.
  Moreover, if $J_\Lambda$ contains no confluent state, then for any $x\in E$, $t\in\mathcal{I}_x$,
  \begin{equation}\label{eq.Lambda.Lebesgue}
    \Lambda(x,t)=\Lambda^{ac}(x,t)+\Lambda^{sc}(x,t)+\Lambda^{pd}(x,t),
  \end{equation}
  where
  $$\Lambda^{ac}(x,t)=\int_0^t\lambda(\phi(s,x))\ud s,\quad
  \Lambda^{pd}(x,t)=\sum_{0<s\leqslant t}\Delta\Lambda(\phi(s,x)),$$
  and $\Lambda^{sc}(x,\cdot)$ is the singularly continuous part of $\Lambda(x,\cdot)$.
  Or equivalently,
  \begin{equation}\label{eq.F.Lebesgue}
    F(x,t)=F^{ac}(x,t)\,F^{sc}(x,t)\,F^{pd}(x,t),
  \end{equation}
  where
  $$F^{ac}(x,t)=\exp\left[-\int_0^t\lambda(\phi(s,x))\ud s\right],\quad
  F^{sc}(x,t)=\exp\big[-\Lambda^{sc}(x,t)\big],$$
  $$F^{pd}(x,t)=\prod_{0<s\leqslant t}\big[1-\Delta\Lambda(\phi(s,x))\big].$$
  Furthermore,
  \begin{align*}
  &\lambda(x)=
    \left\{\begin{array}{ll}
      \displaystyle \frac{\partial^+\Lambda(x,t)}{\partial t}\bigg|_{t=0}, &
      \hbox{if }\displaystyle\frac{\partial^+\Lambda(x,t)}{\partial t}\bigg|_{t=0}\hbox{ exists;} \\
      0, & \hbox{otherwise,}
    \end{array}\right. \\
  &\Delta\Lambda(\phi(t,x))=\Lambda(x,t)-\Lambda(x,t-),
  \end{align*}
  are locally path-integrable and locally path-summable, respectively.
\end{corollary}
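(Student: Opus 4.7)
The proof is essentially a direct corollary of Theorem \ref{thm.a.Lebesgue} once one recognizes that the conditional hazard function $\Lambda$ fits the framework of additive functionals of the SDS. The plan is to verify the hypotheses of that theorem for $\Lambda$, then translate the additive decomposition into the multiplicative decomposition of $F$ via the Stieltjes exponential.

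First, I would establish that $\Lambda \in \mathfrak{A}_\phi^{loc}$. The additive identity $\Lambda(x,s+t)=\Lambda(x,s)+\Lambda(\phi(s,x),t)$ with $\Lambda(x,0)=0$ is exactly Theorem \ref{thm.A add<=>F mult} applied to the characteristic $F$ of the PDMP; right-continuity of $\Lambda(x,\cdot)$ on $\mathcal{I}_x$ follows from right-continuity of $F(x,\cdot)$ together with the fact that the integrand $1/F(x,s-)$ in \eqref{eq.slogF} is bounded on every compact subset of $\mathcal{I}_x$ (since $F(x,s-)>0$ there). Monotonicity of $\Lambda(x,\cdot)$ guarantees locally finite variation, giving $\Lambda\in\mathfrak{A}_\phi^{loc}$.

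Second, under the hypothesis that $J_\Lambda$ contains no confluent state, Theorem \ref{thm.a.Lebesgue} applies to $a=\Lambda$ and produces the decomposition \eqref{eq.Lambda.Lebesgue} with $\Lambda^{ac}(x,t)=\int_0^t \mathcal{X}\Lambda(\phi(s,x))\,\mathrm{d}s$ and $\Lambda^{pd}(x,t)=\sum_{0<s\le t}\Delta\Lambda(\phi(s,x))$. Setting $\lambda(x):=\mathcal{X}\Lambda(x)$ as in \eqref{eq.Xa} and noting that $\Delta\Lambda(\phi(t,x))=\Lambda(x,t)-\Lambda(x,t-)$ by Lemma \ref{lem.a.discrete}, the local path-integrability of $\lambda$ and the local path-summability of $\Delta\Lambda$ are precisely the conclusions of Theorem \ref{thm.a.Lebesgue}.

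Third, to pass from the additive decomposition of $\Lambda$ to the multiplicative decomposition \eqref{eq.F.Lebesgue} of $F$, I would invoke the Stieltjes exponential formula \eqref{eq.sexpLambda}. Since $\Lambda^c(x,\cdot)=\Lambda^{ac}(x,\cdot)+\Lambda^{sc}(x,\cdot)$ and the jumps of $\Lambda$ coincide with those of $\Lambda^{pd}$, we get
\begin{equation*}
F(x,t)=\sexp\Lambda(x,t)=e^{-\Lambda^{ac}(x,t)}\cdot e^{-\Lambda^{sc}(x,t)}\cdot\prod_{0<s\le t}[1-\Delta\Lambda(\phi(s,x))],
\end{equation*}
which is exactly \eqref{eq.F.Lebesgue} with the stated definitions of $F^{ac}$, $F^{sc}$, $F^{pd}$. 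No real obstacle arises: every ingredient is either built into the Stieltjes logarithm/exponential calculus or already proved in Theorem \ref{thm.a.Lebesgue}; the only subtlety worth flagging explicitly is checking right-continuity and finiteness of $\Lambda(x,\cdot)$ on $\mathcal{I}_x$ so that $\Lambda$ genuinely lies in $\mathfrak{A}_\phi^{loc}$ rather than merely satisfying the additivity relation.
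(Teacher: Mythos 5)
Your proposal is correct and follows essentially the same route as the paper's own proof: deduce $\Lambda\in\mathfrak{A}_\phi$ from Theorem \ref{thm.A add<=>F mult}, use monotonicity of $\Lambda(x,\cdot)$ (inherited from the decrease of $F(x,\cdot)$) to get locally finite variation, apply Theorem \ref{thm.a.Lebesgue} for the decomposition, and pass to \eqref{eq.F.Lebesgue} via the Stieltjes exponential \eqref{eq.sexpLambda} using continuity of $\Lambda^{ac}$ and $\Lambda^{sc}$. Your added remark on verifying right-continuity and finiteness of $\Lambda(x,\cdot)$ on $\mathcal{I}_x$ is a reasonable extra precaution but does not change the argument.
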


\begin{proof}
  Following from Theorem \ref{thm.A add<=>F mult}, we get $\Lambda\in\mathfrak{A}_\phi$. In addition, for any fixed $x\in E$, $F(x,\cdot)$ is a decreasing function, so $\Lambda(x,\cdot)$ is an increasing function, hence $\Lambda\in\mathfrak{A}_\phi^{loc}$. Thus, by Theorem \ref{thm.a.Lebesgue}, we get (\ref{eq.Lambda.Lebesgue}). And $\lambda$ and $\Delta\Lambda$ are locally path-integrable and locally path-summable, respectively. Applying (\ref{eq.sexpLambda}) to $F(x,\cdot)=\sexp\Lambda(x,\cdot)$, and noticing that $\Lambda^{ac}(x,\cdot)$ and $\Lambda^{sc}(x,\cdot)$ are continuous, we have (\ref{eq.F.Lebesgue}) directly.
\end{proof}

\begin{remark}
  Actually, from the corollary above, we have
  $$F^{ac}=\sexp\Lambda^{ac},\quad F^{sc}=\sexp\Lambda^{sc},\quad F^{pd}=\sexp\Lambda^{pd}.$$
  Since $\Lambda^{ac}$, $\Lambda^{sc}$ and $\Lambda^{pd}$ are all additive functionals of the SDS $\phi$, by Theorem \textnormal{\ref{thm.A add<=>F mult}}, $F^{ac}$, $F^{sc}$ and $F^{pd}$ all satisfy the condition \textnormal{(ii)} in Theorem \textnormal{\ref{thm.PDP=>str.Markov}}.
\end{remark}

\section{Additive functionals of a general PDMP}

\begin{definition}\label{def.A.PDMP}
  Let $X=\{X_t\}_{0\leqslant t<\tau}$ be a general PDMP with natural filtration $\{\F_t\}$.
  A real valued c\`adl\`ag $\{\F_t\}$-adapted process $A=\{A_t\}_{0\leqslant t<\tau}$ is called an \emph{additive functional} of $X$ if, for any stopping time $T$,
  \begin{enumerate}[(i)]
    \item $A_0=0$;
    \item $A_{T+s}=A_T+ A_s\circ\theta_T $ a.s. on $\{T<\infty\}$,
  \end{enumerate}
  where $\theta_t$ is the shift operator.
\end{definition}

\begin{theorem}\label{thm.A=a+b}
  Let $X$ be a general PDMP.
  An $\R$-valued c\`adl\`ag $\F$-adapted process $A=\{A_t\}_{0\leqslant t<\tau}$ with $A_0=0$ is an additive functional of $X$, if and only if there exists $a\in\mathfrak{A}_\phi$ and a measurable function $b:E\times\R_+\times E\mapsto\R$ satisfying that for any $x,y\in E$, $s,t\in\R_+$ and $s+t\in\mathcal{I}_x\setminus\{0\}$,
  \begin{equation}\label{eq.b.invariant}
    b(x,s+t,y)=b(\phi(s,x),t,y),
  \end{equation}
  such that
  \begin{equation}\label{eq.A=a+b}
    A_t=A_{\tau_n} + a(X_{\tau_n},t-\tau_n)
    +b(X_{\tau_n},\tau_{n+1}-\tau_n,X_{\tau_{n+1}})\bbbone_{\{t=\tau_{n+1}\}}
  \end{equation}
  for $t\in(\tau_n,\tau_{n+1}]$, $n\in\N$. Especially, for any $x\in E$, $t\in\mathcal{I}_x\setminus\{0\}$, if $\phi(t,x)$ is not a confluent state, then there exist a measurable function $\bar{b}:\bar{E}\times E\mapsto \R$ such that
  \begin{equation}\label{eq.b=bar.b}
    b(x,t,y)=\bar{b}(\phi(t,x),y),\quad y\in E.
  \end{equation}
\end{theorem}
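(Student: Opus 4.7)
\emph{Strategy and sufficiency.} My plan is to prove the two implications separately, construct $a$ from the inter-jump behaviour of $A$ and $b$ from the values of $A$ at the jump times, and then derive (\ref{eq.b=bar.b}) by an argument parallel to Theorem \ref{thm.q=Q}. For sufficiency, assume $A$ is defined by (\ref{eq.A=a+b}) with $a\in\mathfrak{A}_\phi$ and $b$ satisfying (\ref{eq.b.invariant}). The only nontrivial point is the stopping-time identity $A_{T+s}=A_T+A_s\circ\theta_T$ on $\{T<\infty\}$. Using the jumping-filtration decomposition $\{T<\infty\}=\bigcup_n\{\tau_n\leqslant T<\tau_{n+1}\}$, I would verify the identity on each piece, splitting further according to whether $T+s$ sits in the same inter-jump interval, lies strictly inside a later one, or coincides with some jump time $\tau_m$. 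On $\{\tau_n\leqslant T<\tau_{n+1}\}$ we have $X_T=\phi(T-\tau_n,X_{\tau_n})$, so the $a$-contributions collapse via (\ref{eq.a.SDS}); the $b$-contributions at jump times transform correctly because (\ref{eq.b.invariant}) lets us replace $b(X_T,\cdot,\cdot)$ by $b(X_{\tau_n},\cdot,\cdot)$ when the two reference points lie on a common SDS trajectory.

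\emph{Construction of $a$ and $b$.} Because the natural filtration is of jumping type, $\F_t$ restricted to $\{t<\tau_1\}$ coincides with $\F_0=\sigma(X_0)$; hence under $\Prb_y$ the random variable $A_t$ is $\Prb_y$-a.s.\ constant on $\{t<\tau_1\}$. I define $a(y,t)$ to be this constant for $t\in\mathcal{I}_y$ and $0$ otherwise; its measurability in $y$ follows from the identity $a(y,t)\,F(y,t)=\Ep_y[A_t\bbbone_{\{t<\tau_1\}}]$ together with the measurability of $y\mapsto\Prb_y$, and its right-continuity in $t$ is inherited from $A$. Similarly $\F_{\tau_1}=\sigma(X_0,\tau_1,X_{\tau_1})$ by the jumping-filtration structure, so $A_{\tau_1}$ is a measurable function of $(X_0,\tau_1,X_{\tau_1})$; setting $b(y,t,z):=A_{\tau_1}-a(y,t)$ on $\{X_0=y,\tau_1=t,X_{\tau_1}=z\}$ defines $b:E\times\R_+\times E\mapsto\R$ measurably. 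The representation (\ref{eq.A=a+b}) on a general inter-jump interval $(\tau_n,\tau_{n+1}]$ is then obtained by iterating the stopping-time identity at $\tau_n$ and using the strong Markov property to pull $A_{\cdot-\tau_n}\circ\theta_{\tau_n}$ back to the appropriate deterministic function of $X_{\tau_n}$, the elapsed time, and, at $t=\tau_{n+1}$, of $X_{\tau_{n+1}}$.

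\emph{Additivity of $a$ and invariance of $b$.} The identity (\ref{eq.a.SDS}) for $a$ follows by applying the additive-functional property of $A$ at a deterministic time $s$ under $\Prb_y$: on $\{s+t<\tau_1\}$ we have $A_{s+t}=A_s+A_t\circ\theta_s$, and the strong Markov property identifies $A_t\circ\theta_s$ with $a(\phi(s,y),t)$ by the same constancy argument applied under $\Prb_{\phi(s,y)}$. For the invariance (\ref{eq.b.invariant}): under $\Prb_x$, on $\{\tau_1>s\}$, the strong Markov property at $s$ says the post-$s$ process is a fresh general PDMP started from $\phi(s,x)$, and $A_{\tau_1}-A_s$ can be computed both via the $(a,b)$ associated with starting state $x$ and via the $(a,b)$ associated with starting state $\phi(s,x)$; subtracting the matching $a$-parts (which already agree by (\ref{eq.a.SDS})) forces $b(x,\tau_1,X_{\tau_1})=b(\phi(s,x),\tau_1-s,X_{\tau_1})$ on the joint support, which is exactly (\ref{eq.b.invariant}).

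\emph{Non-confluent reduction and main obstacle.} To obtain (\ref{eq.b=bar.b}) when $\phi(t,x)$ is not a confluent state, I imitate the proof of Theorem \ref{thm.q=Q}: for any two triples with $\phi(t_1,x_1)=\phi(t_2,x_2)=x^*$, choose $t_0\in(0,t_1\wedge t_2)$ and $x_0\in E$ with $\phi(t_1-t_0,x_1)=\phi(t_2-t_0,x_2)=x_0$ and apply (\ref{eq.b.invariant}) twice to get $b(x_1,t_1,y)=b(x_0,t_0,y)=b(x_2,t_2,y)$; hence $b(x,t,y)$ depends on $(x,t)$ only through the non-confluent state $\phi(t,x)$, giving a well-defined $\bar b$ that extends measurably to $\bar E\times E$. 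The main technical obstacle throughout will be reconciling the a.s.\ identities under the various $\Prb_y$ into pointwise equalities for the deterministic functions $a$ and $b$, and verifying their joint measurability — a standard but careful regular-conditional-distribution exercise, while the case analysis in sufficiency and the non-confluent reduction are routine.
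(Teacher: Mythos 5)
Your proposal is correct and follows essentially the same route as the paper: read off $a$ from $A_t$ on $\{t<\tau_1\}$ via the jumping-filtration identity $\F_t\cap\{t<\tau_1\}=\sigma(X_0)\cap\{t<\tau_1\}$, read off $b$ from $A_{\tau_1}$ via $\F_{\tau_1}=\sigma(X_0,\tau_1,X_{\tau_1})$, obtain (\ref{eq.a.SDS}) and (\ref{eq.b.invariant}) from the shift identity $A_{s+t}=A_s+A_t\circ\theta_s$ on $\{s<\tau_1\}$, reduce to $n=0$ by additivity, and derive (\ref{eq.b=bar.b}) by the same confluent-state argument as Theorem \ref{thm.q=Q}. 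The only differences are cosmetic: you spell out the sufficiency case analysis and the measurability bookkeeping that the paper dismisses as obvious.
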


\begin{proof}
  The sufficiency is obvious. We only need to prove the necessity. By the additivity of $A$, It suffices to prove (\ref{eq.A=a+b}) for n=0.
  Since $\F_t=\sigma(X_0)$ for all $t<\tau_1$, and $A$ is $\F$-adapted, there exists a measurable function $a:\bar{E}\times\R_+\mapsto \R$ such that $A_t=a(X_0,t)$ for all $t<\tau_1$. It is followed from Definition \ref{def.A.PDMP}, that $a$ is an additive functional of the SDS $\phi$. 

  Define a process $A^-=\{A_t^-\}_{0\leqslant t<\tau}$ by
  \begin{align*}
    A_0^- &:= 0, \\
    A_t^- &:= A_{\tau_n}+a(X_{\tau_n},t-\tau_n),\quad t\in(\tau_n,\tau_{n+1}],\;n\in\N.
  \end{align*}
  Apparently, $A^-$ is an additive functional of $X$.
  And denote
  $$\hat{A}_t:=A_t-A_t^-,\quad 0\leqslant t<\tau.$$
  Since $\F_{\tau_1}=\sigma(X_0,\tau_1,X_{\tau_1})$, there exists a measurable function $b:E\times\R_+\times E\mapsto \R$ such that
  $$\hat{A}_{\tau_1}=b(X_0,\tau_1,X_{\tau_1})\quad\hbox{ a.s. on }\{\tau_1<\infty\}.$$
  By the additivity of $A$ and $A^-$,
  $$ \hat{A}_{\tau_1}= \hat{A}_{\tau_1}\circ\theta_s \quad\hbox{ a.s. on }\{s<\tau_1\},$$
  then we get (\ref{eq.b.invariant}). Hence, (\ref{eq.A=a+b}) holds for $n=0$.

  Following from (\ref{eq.b.invariant}), to prove (\ref{eq.b=bar.b}), we need to consider whether $\phi(t,x)$ is an equilibrium state or not for $x\in E$ and $t\in\mathcal{I}_x\setminus\{0\}$. If $x'\in E_e$ is not a confluent state, that is, $\phi(t,x')\equiv x'$ for all $t\in\R_+$, then
  $$b(x',s+t,y)=b(\phi(s,x'),t,y)=b(x',t,y)$$
  holds for all $y\in E$, $s,t\in\R_+$ and $s+t>0$. Thus there exists a measurable function $\bar{b}_1:E_e\times E\mapsto\R$ such that
  $$b(x',t,y)=\bar{b}_1(x',y)$$
  which is independent of $t$. On the other hand, if $x''\in E\setminus E_e$ is not a confluent state. For any $x_1,x_2\in E$, $t_1\in\mathcal{I}_{x_1}\setminus\{0\}$ and $t_2\in\mathcal{I}_{x_2}\setminus\{0\}$ satisfying $\phi(t_1,x_1)=\phi(t_2,x_2)=x''$, there exists $t_0\in(0,t_1\land t_2)$ such that
  $$\phi(t_1-t_0,x_1)=\phi(t_2-t_0,x_2)=:x_0.$$
  Thus, for $y\in E$,
  \begin{align*}
    b(x_1,t_1,y)=&b(\phi(t_1-t_0,x_1),t_0,y)=b(x_0,t_0,y)\\
                =&b(\phi(t_2-t_0,x_2),t_0,y)=b(x_2,t_2,y).
  \end{align*}
  By the arbitrariness of $(x_1,t_1)$ and $(x_2,t_2)$, there exists a measurable function $\bar{b}_2:E\setminus E_e\times E\mapsto\R$ such that
  $$b(x,t,y)=\bar{b}_2(x'',y)$$
  holds for all $x\in E$, $t\in\mathcal{I}_x\setminus\{0\}$ with $\phi(t,x)=x''$.
  Therefore, for all $x\in E$, $t\in\mathcal{I}_x\setminus\{0\}$, if $\phi(t,x)$ is not a confluent state, then we have (\ref{eq.b=bar.b}). Actually, we can take
  $$\bar{b}(x,\cdot)=
  \left\{
    \begin{array}{ll}
      \bar{b}_1(x,\cdot), & x\in E_e; \\
      \bar{b}_2(x,\cdot), & x\in E\setminus E_e.
    \end{array}
  \right.$$
  The proof is complete.
\end{proof}

\begin{corollary}\label{cor.A.pred=a}
  Let $A$ be an additive functional of a general PDMP $X$. $A$ is predictable if and only if there exists $a\in\mathfrak{A}_\phi$ such that
  \begin{equation}\label{eq.A.pred=a}
    A_t=A_{\tau_n} + a(X_{\tau_n},t-\tau_n),\quad t\in(\tau_n,\tau_{n+1}],\, n\in\N.
  \end{equation}
\end{corollary}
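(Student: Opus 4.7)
My plan is to invoke the representation of Theorem~\ref{thm.A=a+b}, which gives a decomposition $A_t = A_{\tau_n} + a(X_{\tau_n}, t - \tau_n) + b(X_{\tau_n}, \sigma_{n+1}, X_{\tau_{n+1}})\bbbone_{\{t=\tau_{n+1}\}}$ of any additive functional, and then to show that predictability of $A$ is exactly the condition that allows the $b$ term to be eliminated. The main tool I will use is the standard criterion that a c\`adl\`ag $\F$-adapted process $A$ is predictable iff $A_T\,\bbbone_{\{T<\infty\}}$ is $\F_{T-}$-measurable for every stopping time $T$, together with the facts that any stopping time $T$ is itself $\F_{T-}$-measurable and that $\F_{\tau_k}\subset\F_{T-}$ whenever $\tau_k\leqslant T$.

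For the sufficiency direction, suppose (\ref{eq.A.pred=a}) holds. Since $a(x,\cdot)$ is right-continuous, $A$ is c\`adl\`ag, and it is clearly $\F$-adapted. Fix a stopping time $T$. Iterating (\ref{eq.A.pred=a}) on the event $\{\tau_n<T\leqslant\tau_{n+1}\}$ gives
\[
A_T \;=\; \sum_{k=1}^{n} a\!\left(X_{\tau_{k-1}},\sigma_k\right) \;+\; a\!\left(X_{\tau_n},\,T-\tau_n\right).
\]
Each $X_{\tau_{k-1}}$ is $\F_{\tau_{k-1}}$-measurable, each $\sigma_k=\tau_k-\tau_{k-1}$ is $\F_{\tau_k-}$-measurable, and the chain of inclusions $\F_{\tau_{k-1}}\subset\F_{\tau_k-}\subset\F_{T-}$ (valid as $\tau_k\leqslant T$) together with the $\F_{T-}$-measurability of $T$ and $X_{\tau_n}$ shows every summand is $\F_{T-}$-measurable. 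Hence $A_T$ is $\F_{T-}$-measurable, so $A$ is predictable.

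For the necessity direction, assume $A$ is predictable and apply Theorem~\ref{thm.A=a+b} to obtain a decomposition $(a,b)$. Taking $T=\tau_{n+1}$ in the predictability criterion, and subtracting the already-$\F_{\tau_{n+1}-}$-measurable part coming from $A_{\tau_n}+a(X_{\tau_n},\sigma_{n+1})$, the jump contribution $b(X_{\tau_n},\sigma_{n+1},X_{\tau_{n+1}})$ must itself be $\F_{\tau_{n+1}-}$-measurable. Since the conditional law of $X_{\tau_{n+1}}$ given $\F_{\tau_{n+1}-}$ is $Q(\phi(\sigma_{n+1},X_{\tau_n}),\cdot)$ by Theorem~\ref{thm.q=Q}, the map $y\mapsto b(x,t,y)$ must be $Q(\phi(t,x),\cdot)$-a.s.\ constant; by the non-confluence argument used for (\ref{eq.b=bar.b}), I can write $b(x,t,y)=\bar c(\phi(t,x))$ after modification on a null set.

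It remains to absorb $\bar c$ into the first part of the decomposition. For a deterministic $t$ that is a ``soft'' atom of $F(x,\cdot)$, i.e.\ satisfies $0<\Delta\Lambda(x,t)<1$, both $\{\tau_1=t\}$ and $\{\tau_1>t\}$ have positive $\Prb_x$-probability and $\{\tau_1=t\}\notin\F_{t-}$, so the $\F_{t-}$-measurability of $A_t$ forces $\bar c(\phi(t,x))=0$ at such $(x,t)$. For any remaining ``hard'' atom, where $\Delta\Lambda(x,t)=1$ (so $F(x,\cdot)$ has no mass beyond $t$), I define
\[
\tilde a(x,s) \;:=\; a(x,s) + \bar c(\phi(t,x))\,\bbbone_{\{s\geqslant t\}}
\]
(and extend consistently when several hard atoms lie on one orbit) and verify, using the semigroup property (i) of $\phi$ and the multiplicativity (ii) of $F$ from Theorem~\ref{thm.PDP=>str.Markov}, that $\tilde a\in\mathfrak{A}_\phi$ and that (\ref{eq.A.pred=a}) holds with $a$ replaced by $\tilde a$. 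The main obstacle is this last bookkeeping step: keeping $\tilde a$ well-defined, right-continuous and additive when absorbing a countable collection of hard-atom contributions along every orbit of $\phi$; I plan to handle it by translating all computations to the intrinsic parameterisation of the orbit via $\phi(\cdot,x)$, where the semigroup identity makes the absorptions compatible.
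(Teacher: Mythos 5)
Your argument rests on the claim that a c\`adl\`ag adapted process $A$ is predictable \emph{if and only if} $A_T\bbbone_{\{T<\infty\}}$ is $\F_{T-}$-measurable for every stopping time $T$. Only the ``only if'' half of this is true. The correct characterization (Dellacherie--Meyer) requires, besides $\F_{T-}$-measurability of $A_T$ at every \emph{predictable} time $T$, that $\Delta A_T=0$ a.s.\ on $\{T<\infty\}$ for every \emph{totally inaccessible} time $T$; and this second half does not follow from $\F_{T-}$-measurability at all stopping times. The Poisson process $N$ (a general PDMP with $a=0$, $b\equiv 1$ in the notation of Theorem \ref{thm.A=a+b}) is the counterexample: for any stopping time $T$ of its completed natural filtration one checks $N_T\in\F_{T-}$ (the jumping-filtration structure of stopping times gives $T-\tau_{n-1}=s_n(Y_{n-1})\wedge\sigma_n\in\F_{T-}$, and $\{\sigma_n=s_n(Y_{n-1})\}$ is null), yet $N$ is not predictable. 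This invalidates your sufficiency argument as written. The repair is standard and does not need any stopping-time criterion: on $(\tau_n,\tau_{n+1}]$ the process $A_{\tau_n}+a(X_{\tau_n},t-\tau_n)$ is an $\F_{\tau_n}\otimes\B(\R_+)$-measurable function of $(\omega,t)$ supported on the predictable stochastic interval $(\tau_n,\tau_{n+1}]$, hence predictable by a monotone class argument starting from $Z\,\bbbone_{(\tau_n,\tau_{n+1}]}$ with $Z\in\F_{\tau_n}$.

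The same omission resurfaces in your necessity step. Your soft/hard atom dichotomy covers only the atoms of $F(x,\cdot)$; it says nothing about times $t$ with $\Delta\Lambda(x,t)=0$ that are charged by the continuous part of $\Lambda(x,\cdot)$, where $\{\tau_1=t\}$ is null for each fixed $t$ but $\tau_1$ still lands with positive probability. There $\bar c(\phi(\cdot,x))$ must vanish $\Lambda^{c}(x,\cdot)$-a.e., and the only way to see this is precisely the ``no jump at totally inaccessible times'' half of predictability that your criterion drops (recall from Section 2.3 that the continuous part of $\Lambda$ corresponds to the totally inaccessible part of $\tau_1$). Two further remarks: your worry about absorbing ``a countable collection of hard-atom contributions along every orbit'' is vacuous, since $\Delta\Lambda(x,t)=1$ forces $F(x,t)=0$, i.e.\ $t=c(x)$, so each orbit carries at most one hard atom, at its terminal point. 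More importantly, the paper's own proof avoids the whole $(a,b)$-absorption route: it uses that for a jumping filtration $\F_t=\sigma(X_0)$ for $t<\tau_1$ and $\F_{\tau_1-}=\sigma(X_0,\tau_1)$, so that a predictable $A$ satisfies $A_t=a(X_0,t)$ for all $t\leqslant\tau_1$ \emph{including the endpoint}, and additivity of $A$ then makes $a\in\mathfrak{A}_\phi$ directly. That structural fact about the predictable $\sigma$-field of a discrete-type filtration is the key lemma you should be invoking instead of the stopping-time criterion.
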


\begin{proof}
  The sufficiency is obvious. Now we prove the necessity. By the additivity of $A$, it suffices to prove (\ref{eq.A.pred=a}) for n=0.
  Since $\F_t=\sigma(X_0)$ for all $t<\tau_1$, and $A$ is predictable, there exists a measurable function $a:\bar{E}\times\R_+\mapsto \R$ such that $A_t=a(X_0,t)$ for all $t\leqslant\tau_1$. The additivity of $A$ implies that $a$ is an additive functional of the SDS $\phi$.
\end{proof}

Here we define an auxiliary process $X^-=\{X_t^-\}_{0\leqslant t<\tau}$,
\begin{equation*}
  X_t^-:=X_0\bbbone_{\{t=0\}}+\sum_{n=0}^\infty\phi(t-\tau_n,X_{\tau_n})
  \bbbone_{\{\tau_n<t\leqslant\tau_{n+1}\}},  \quad 0\leqslant t<\tau.
\end{equation*}
Obviously, $X_t^-=X_t$ if $t\neq\tau_n$, $n=1,2,\dots$ In other words, $X^-$ modifies the values of $X$ only at the random jumping times. It is known that $X^-$ is predictable.

\begin{corollary}\label{thm.A.Lebesgue}
  Let $A$ be an optional additive functional of the general PDMP $X$ with the associated $a$ and $b$ as in Theorem \textnormal{\ref{thm.A=a+b}}. Assume that $J_a$ contains no confluent state. If $a\in\mathfrak{A}_\phi^{loc}$, then
  \begin{align}
    A_t=&\int_0^t\mathcal{X}a(X_s)\ud s+\sum_{n=0}^{N_t}a^{sc}(X_{\tau_n},t\land\tau_{n+1}-\tau_n)
    +\sum_{0<s\leqslant t}\Delta a(X_s^-) \nonumber\\
    &+\sum_{n=1}^{N_t}b(X_{\tau_{n-1}},\tau_{n}-\tau_{n-1},X_{\tau_{n}}),\quad\quad 0\leqslant t<\tau, \label{eq.A.Lebesgue}
  \end{align}
  where $N_t:=\max\{n\geqslant 0:\tau_n\leqslant t\}$, $a^{sc}\in\mathfrak{A}_\phi^{loc}$, $\mathcal{X}a$ is locally absolutely path-integrable, and $\Delta a$ is locally absolutely path-summable.
\end{corollary}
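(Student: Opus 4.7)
\medskip

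\noindent\textbf{Proof proposal.} The plan is to iterate the local representation from Theorem \ref{thm.A=a+b}, apply the Lebesgue decomposition for additive functionals of the SDS from Theorem \ref{thm.a.Lebesgue} to $a$, and then rewrite each of the resulting sums along the skeleton $\{\tau_n\}$ as a global object on $[0,t]$ by identifying $\phi(s-\tau_n,X_{\tau_n})$ with $X_s$ (resp.\ $X_s^-$) on the $n$th inter-jump interval.

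First, I would observe that Theorem \ref{thm.A=a+b} applied on each interval $(\tau_n,\tau_{n+1}]$ gives, by telescoping,
\begin{equation*}
A_t=\sum_{n=0}^{N_t}a\bigl(X_{\tau_n},t\wedge\tau_{n+1}-\tau_n\bigr)
+\sum_{n=1}^{N_t}b\bigl(X_{\tau_{n-1}},\tau_n-\tau_{n-1},X_{\tau_n}\bigr),
\quad 0\leqslant t<\tau,
\end{equation*}
where I use $a(x,0)=0$ to handle the boundary case $t=\tau_{N_t}$. The second sum is already exactly the last term of \eqref{eq.A.Lebesgue}, so the task reduces to rewriting the first sum.

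Next, since $a\in\mathfrak{A}_\phi^{loc}$ and $J_a$ contains no confluent state, Theorem \ref{thm.a.Lebesgue} yields the decomposition $a=a^{ac}+a^{sc}+a^{pd}$ with the stated integral and sum forms. Substituting splits the $a$-sum into three pieces. For the absolutely continuous part, a change of variable $u=\tau_n+s$ on each interval gives
\begin{equation*}
\sum_{n=0}^{N_t}\int_0^{t\wedge\tau_{n+1}-\tau_n}\!\mathcal{X}a(\phi(s,X_{\tau_n}))\,\ud s
=\sum_{n=0}^{N_t}\int_{\tau_n}^{t\wedge\tau_{n+1}}\!\mathcal{X}a(X_u)\,\ud u
=\int_0^t\mathcal{X}a(X_u)\,\ud u,
\end{equation*}
using $X_u=\phi(u-\tau_n,X_{\tau_n})$ on $[\tau_n,\tau_{n+1})$. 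For the purely discontinuous part, the analogous change of index and the fact that $X_u^-=\phi(u-\tau_n,X_{\tau_n})$ on $(\tau_n,\tau_{n+1}]$ combined with the partitioning of $(0,t]$ into the intervals $(\tau_n,\tau_{n+1}\wedge t]$ convert the double sum into the single sum $\sum_{0<s\leqslant t}\Delta a(X_s^-)$. The singularly continuous piece carries no such reduction and is left as $\sum_{n=0}^{N_t}a^{sc}(X_{\tau_n},t\wedge\tau_{n+1}-\tau_n)$.

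The only genuinely delicate point is bookkeeping at the endpoints: when $t=\tau_{n+1}$, one has $N_t=n+1$ and must verify that the extra summand $a^{\bullet}(X_{\tau_{n+1}},0)=0$ does not disturb anything, and that the upper limit $u=\tau_{n+1}$ in the pure-jump change of variable correctly picks up $X_{\tau_{n+1}}^-=\phi(\tau_{n+1}-\tau_n,X_{\tau_n})$ rather than $X_{\tau_{n+1}}$ itself — this is precisely what justifies writing $\Delta a(X_s^-)$ instead of $\Delta a(X_s)$. The integrability/summability statements on $\mathcal{X}a$ and $\Delta a$, and $a^{sc}\in\mathfrak{A}_\phi^{loc}$, are inherited directly from Theorem \ref{thm.a.Lebesgue}; no further work is needed.
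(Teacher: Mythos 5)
Your proposal is correct and follows exactly the route the paper takes: substitute the representation (\ref{eq.A=a+b}) from Theorem \ref{thm.A=a+b} and apply the Lebesgue decomposition of Theorem \ref{thm.a.Lebesgue} to $a$. The paper states this in two lines and leaves the telescoping, the change of variables identifying $\phi(s-\tau_n,X_{\tau_n})$ with $X_s$ (resp.\ $X_s^-$), and the endpoint bookkeeping implicit, whereas you spell them out; there is no substantive difference.
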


\begin{proof}
  For an optional additive functional $A$, we have (\ref{eq.A=a+b}). If $a\in\mathfrak{A}_\phi^{loc}$, applying Theorem \ref{thm.a.Lebesgue}, we get (\ref{eq.A.Lebesgue}).
\end{proof}

\begin{definition}
  The process $M=\{M_t\}_{0\leqslant t<\tau}$ is a \emph{local martingale prior to $\tau$} if there exist localizing times $\{T_n\}$ with $T_n\uparrow\tau$ such that $M_{t\land T_n}$ is a martingale for each $n\in\N$.
\end{definition}

Let $X$ be a general PDMP with characteristic triple $(\phi,F,q)$. The corresponding \emph{jumping measure} is
\begin{equation}\label{eq.mu}
  \mu(\ud t,\ud y)=\sum_{n=1}^\infty \delta_{(\tau_n,X_{\tau_n})}(\ud t,\ud y) \bbbone_{\{\tau_n<\infty\}}.
\end{equation}
The \emph{predictable dual projection} (or \emph{compensator}) of the jumping measure $\mu$ for a general PDMP $X$ is
$$\nu(\ud t,\ud y) = \sum_{n=0}^\infty\frac{F(X_{\tau_n},\ud t-\tau_n)\, q(X_{\tau_n},t-\tau_n,\ud y)}{F(X_{\tau_n},(t-\tau_n)-)} \bbbone_{\{\tau_n<t\leqslant\tau_{n+1}\}}.$$
Due to (\ref{eq.slogF}), the definition of Stieltjes logarithm, we have
$$\frac{F(X_{\tau_n},\ud t-\tau_n)}{F(X_{\tau_n},(t-\tau_n)-)}
  = \Lambda(X_{\tau_n},\ud t-\tau_n).$$
Therefore,
\begin{equation}\label{eq.nu}
  \nu(\ud t,\ud y)=\sum_{n=0}^\infty\Lambda(X_{\tau_n},\ud t-\tau_n)\,
  q(X_{\tau_n},t-\tau_n,\ud y)\bbbone_{\{\tau_n<t\leqslant\tau_{n+1}\}}.
\end{equation}

\begin{theorem}\label{thm.semimartingale}
  Let $A$ be an additive functional of a general PDMP $X$, and $a,b$ the functions associated with $A$ as Theorem \textnormal{\ref{thm.A=a+b}}.


  \textnormal{(i)} $A$ is a local martingale prior to $\tau$ if and only if for any $x\in E$ and $t\in\mathcal{I}_x$, we have
  \begin{equation}\label{eq.local-m 1}
    \int_{(0,t]}\int_E \big|b(x,s,y)\big| q(x,s,\ud y) \Lambda(x,\ud s)<\infty,
  \end{equation}
  and
  \begin{equation}\label{eq.local-m 2}
    a(x,t)=-\int_{(0,t]}\int_E b(x,s,y)\, q(x,s,\ud y)\, \Lambda(x,\ud s).
  \end{equation}

  \textnormal{(ii)} $A$ is a special semimartingale prior to $\tau$ if and only if $a\in\mathfrak{A}_\phi^{loc}$, and \textnormal{(\ref{eq.local-m 1})} is satisfied. In this case, $A$ has a canonical decomposition $A=M+B$, where $M$ is a local martingale prior to $\tau$ which is also an additive functional of $X$, and $B$ is a predictable additive functional of $X$ with $B_0=0$ and
  \begin{equation}\label{eq.predictable B}
    B_t=B_{\tau_n}+a^*(X_{\tau_n},t-\tau_n),\quad t\in(\tau_n,\tau_{n+1}],\,n\in\N,
  \end{equation}
  where
  \begin{equation*}
    a^*(x,t)=a(x,t)+\int_{(0,t]}\int_E b(x,s,y)\,q(x,s,\ud y)\,\Lambda(x,\ud s).
  \end{equation*}
\end{theorem}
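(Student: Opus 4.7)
My plan is to decompose $A$ into a predictable additive functional coming from $a$ plus a purely discontinuous piece supported on the random jumping times $\{\tau_k\}$ coming from $b$, to compensate the latter against the predictable measure $\nu$ of (\ref{eq.nu}), and then read off both (i) and (ii) from the resulting canonical semimartingale decomposition; the two equivalences will ultimately rest on the uniqueness of the Doob--Meyer decomposition of a special semimartingale. Concretely, by Theorem \ref{thm.A=a+b} and Corollary \ref{cor.A.pred=a}, for $t\in(\tau_n,\tau_{n+1}]$ one has $A_t=A_t^-+\hat A_t$ with $A_t^-:=A_{\tau_n}+a(X_{\tau_n},t-\tau_n)$ predictable, and
$$\hat A_t=\sum_{k=1}^{N_t}b(X_{\tau_{k-1}},\tau_k-\tau_{k-1},X_{\tau_k})=\int_{(0,t]}\!\!\int_{E}H(s,y)\,\mu(\ud s,\ud y),$$
where $H(s,y):=b(X_{\tau_{N_{s-}}},s-\tau_{N_{s-}},y)$ is predictable and $\mu$ is the jumping measure (\ref{eq.mu}). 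The invariance property (\ref{eq.b.invariant}) is exactly what makes $H$ unambiguously defined along each deterministic skeleton.

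Next I would use formula (\ref{eq.nu}) for the compensator $\nu$ of $\mu$ to compute that, on $(\tau_n,\tau_{n+1}]$,
$$(H\ast\nu)_t=(H\ast\nu)_{\tau_n}+\tilde a(X_{\tau_n},t-\tau_n),\quad \tilde a(x,t):=\int_{(0,t]}\!\!\int_{E}b(x,s,y)\,q(x,s,\ud y)\,\Lambda(x,\ud s).$$
Condition (\ref{eq.local-m 1}) is precisely the requirement that $H\ast|\nu|$ be locally finite along each skeleton starting from $X_{\tau_n}$, and under it Example \ref{ex.integrable&summable}(iii) combined with Corollary \ref{cor.Lambda.Lebesgue} gives $\tilde a\in\mathfrak A_\phi^{loc}$. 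The standard theory of stochastic integrals against integer-valued random measures then yields that $M:=\hat A-H\ast\nu$ is a local martingale prior to $\tau$. Setting $a^*:=a+\tilde a$ and $B_t:=B_{\tau_n}+a^*(X_{\tau_n},t-\tau_n)$ on $(\tau_n,\tau_{n+1}]$ produces the canonical decomposition $A=B+M$ with $B$ a predictable additive functional of $X$ in the sense of Corollary \ref{cor.A.pred=a}.

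For part (i), $A$ is a local martingale prior to $\tau$ iff $B$ is; since a predictable local martingale of locally finite variation vanishing at $0$ is identically zero, this forces $a^*\equiv 0$, which together with (\ref{eq.local-m 1}) rearranges exactly to (\ref{eq.local-m 2}). The converse is immediate: under (\ref{eq.local-m 1}) and (\ref{eq.local-m 2}) the predictable part vanishes and $A=M$ is a local martingale. For part (ii), by uniqueness of the canonical decomposition, $A$ is a special semimartingale iff the predictable piece $B$ has locally finite variation, iff $a^*\in\mathfrak A_\phi^{loc}$, which under (\ref{eq.local-m 1}) is equivalent to $a\in\mathfrak A_\phi^{loc}$, and formula (\ref{eq.predictable B}) then drops out of the construction. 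The main technical obstacle will be the middle step: verifying rigorously that (\ref{eq.nu}) really is the predictable dual projection of $\mu$ on the whole interval $[0,\tau)$ (not just up to $\tau_1$) and that (\ref{eq.local-m 1}) is the precise—rather than a merely sufficient—integrability hypothesis. Both hinge on careful localization along the sequence $\tau_n\uparrow\tau$ using the strong Markov property, extending the quasi-Hunt analysis of \cite{jacod1996jumping} to the present general setting where $\Lambda(x,\cdot)$ may have a singularly continuous part and $a$ need not be (piecewise) absolutely continuous.
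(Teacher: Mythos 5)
Your overall architecture is the same as the paper's: split $A$ into the predictable flow part governed by $a$ and the pure-jump part $H*\mu$ governed by $b$, compensate the latter against $\nu$, and read (i) and (ii) off the resulting decomposition. The sufficiency halves of both statements are handled correctly this way. The problem is the necessity halves, and it is exactly the step you yourself flag as ``the main technical obstacle'' and then leave unresolved. Your construction of $M=\hat A-H*\nu$ and of $B$ \emph{presupposes} (\ref{eq.local-m 1}): without it, $\tilde a$ need not be finite, the decomposition $A=M+B$ does not exist, and every subsequent ``iff'' degenerates into a one-way implication. In particular, in part (i) you cannot argue ``$A$ is a local martingale iff $B$ is'' before knowing (\ref{eq.local-m 1}), and in part (ii) the phrase ``which under (\ref{eq.local-m 1}) is equivalent to $a\in\mathfrak{A}_\phi^{loc}$'' is circular in the only-if direction, since (\ref{eq.local-m 1}) is part of what must be proved there.

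The paper closes this gap by invoking the representation theorem for local martingales relative to discrete-type (jumping) filtrations (Theorem 14 a) of Jacod and Skorokhod): every local martingale prior to $\tau$ is of the form $\bar A*(\mu-\nu)$ for a predictable $\bar A$ with $\bar A*|\mu-\nu|$ finite on $[0,\tau)$, so the integrability is part of the \emph{characterization}, not an added hypothesis. Matching that representation against (\ref{eq.A=a+b}) identifies $\bar A(\tau_{n+1},\cdot)$ with $b(X_{\tau_n},\tau_{n+1}-\tau_n,\cdot)$ and converts the finiteness of $\bar A*|\mu-\nu|$, via the explicit formula (\ref{eq.nu}) and additivity, into the analytic condition (\ref{eq.local-m 1}) for every $x$ and every $t\in\mathcal{I}_x$. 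For part (ii) the paper then takes the canonical decomposition $A=M+B$ of the special semimartingale, shows $b_B=0$ (so $b_M=b$) because a predictable additive functional carries no extra jump term, and applies part (i) to $M$ to recover (\ref{eq.local-m 1}) and the formula for $a^*$. If you want to keep your constructive route, you must supply an independent argument (localization along $\tau_n\uparrow\tau$ plus the fact that every local martingale with respect to a jumping filtration has locally finite variation, and a conditioning step turning an a.s.\ statement into one valid for all $x$ and all $t\in\mathcal{I}_x$) that the local martingale or special semimartingale property forces (\ref{eq.local-m 1}); as written, that implication is asserted, not proved.
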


\begin{proof}

  (ii) It follows from Theorem 14 a) of \cite{jacod1996jumping} that $A$ is a local martingale prior to $\tau$ if and only if there exists a predictable function $\bar{A}:\Omega\times\R_+\times E\mapsto\R$ such that
  \begin{equation}\label{eq.local-m.ineq}
    \int_{(0,t]\times E}\big|\bar{A}(s,y)\big|\,\big|\mu-\nu\big|(\ud s,\ud y)<\infty \quad \Prb_x\hbox{-a.s. for }0\leqslant t<\tau
  \end{equation}
  and
  \begin{align}
    A_t&=\int_{(0,t]\times E}\bar{A}(s,y)(\mu-\nu)(\ud s,\ud y)\nonumber\\
       &=\sum_{n=1}^{N_t}\bar{A}(\tau_n,X_{\tau_n})-\int_{(0,t]\times E}\bar{A}(s,y)\nu(\ud s,\ud y),\quad 0\leqslant t<\tau. \label{eq.local-m.eq}
  \end{align}

  If $A$ is a local martingale prior to $\tau$, comparing (\ref{eq.local-m.eq}) with (\ref{eq.A=a+b}), we have
  $$\bar{A}(\tau_{n+1},X_{\tau_{n+1}})=b(X_{\tau_n},\tau_{n+1}-\tau_n,X_{\tau_{n+1}}),$$
  and
  \begin{align*}
     &a(X_{\tau_n},t-\tau_n)\\
    =&-\int_{(\tau_n,t]\times E}\bar{A}(s,y)\nu(\ud s,\ud y)\\
    =&-\int_{(0,t-\tau_n]}\int_E b(X_{\tau_n},u,y)\,q(X_{\tau_n},u,\ud y)\,\Lambda(X_{\tau_n},\ud u), \quad\tau_n<t\leqslant\tau_{n+1},
  \end{align*}
  for all $n\in\N$. Thus, we get (\ref{eq.local-m 2}). The condition (\ref{eq.local-m.ineq}) is equivalent to that $A$ is of locally finite variation. And that $A$ is of locally finite variation is also equivalent to that $a$ is of locally finite variation, which is the same as (\ref{eq.local-m 1}).

  Conversely, if A is an additive functional with $a$ and $b$,  and (\ref{eq.local-m 1}) and (\ref{eq.local-m 2}) are satisfied, by taking
  $$\bar{A}(s,y)=\sum_{n=0}^\infty\bar{A}_n(s,y)\bbbone_{\{\tau_n<s\leqslant\tau_{n+1}\}},$$
  where
  $\bar{A}_n(s,y)=b(X_{\tau_n},s-\tau_n,y),$
  we have (\ref{eq.local-m.ineq}) and (\ref{eq.local-m.eq}). Then $A$ is a local martingale prior to $\tau$.

  (iii) Assume first that $A$ is a special semimartingale prior to $\tau$. Since all local martingales prior to $\tau$ are of locally finite variation, equivalently, all the semi-martingales prior to $\tau$ are of locally finite variation, which implies that $a$ has locally finite variation. And its canonical decomposition $A=M+B$ gives a local martingale $M$ prior to $\tau$ and a predictable process $B$, while both of them are additive functionals of $X$. Call $(a_M,b_M)$ and $(a_B,b_B)$ the terms associated with $M$ and $B$ in Theorem \ref{thm.A=a+b}. Clearly, $a=a_M+a_B$ and $b=b_M+b_B$.
  Since $B$ is predictable, we have $\Delta B_{\tau_n}=0$, hence
  $\Delta M_{\tau_n}=\Delta A_{\tau_n}$ for $n\geqslant 1$.
  Therefore, $b_B=0$ and $b_M=b$.
  For local martingale $M$ prior to $\tau$, applying (i), we obtain (\ref{eq.local-m 1}) and also that $a_M$ is given by the right-hand side of (\ref{eq.local-m 2}). Therefore (\ref{eq.predictable B}) follows from $a_B=a-a_M$ and $b_B=0$.

  For the converse, assume that we have (\ref{eq.local-m 1}) and  $a\in\mathfrak{A}_\phi^{loc}$. Set $b_M=b$ and define $a_M$ by the right-hand side of (\ref{eq.local-m 2}), then $(a_M,b_M)$ is associated with an additive local martingale $M$ prior to $\tau$ by (i). If $B$ is given by (\ref{eq.predictable B}), which is a predictable process associated with $a_B=a-a_M$ and $b_B=0$, and since $a_B$ is an additive functional of the SDS $\phi$, $B$ is a predictable additive functional of $X$. The assumptions ensure that $a_B$ is of locally finite variation. Equivalently, $B$ is also of locally finite variation. Then $A=M+B$ is a special semimartingale prior to $\tau$.
\end{proof}

Let the PDMP $X$ be quasi-Hunt. 
We prefer the characteristic triple ($\phi, \Lambda, Q$) to ($\phi, \Lambda, q$) in the quasi-Hunt cases in the following. The following theorem, which is Theorem 18 in \cite{jacod1996jumping}, is a direct corollary of Theorem \ref{thm.semimartingale}.

\begin{theorem}\label{cor.semimartingale.Hunt}
  Let $A$ be an additive functional of a quasi-Hunt PDMP $X$ with the characteristic triple $(\phi, \Lambda, Q)$, and $a,\bar{b}$ the functions associated with $A$ as Theorem \textnormal{\ref{thm.A=a+b}}.


  \textnormal{(i)} $A$ is a local martingale prior to $\tau$ if and only if for any $x\in E$ and $t\in[0,c(x))$, we have
  \begin{equation}\label{eq.local-m 1.Hunt}
    \int_{(0,t]}\int_E \big|\bar{b}(\phi(s,x),y)\big| Q(\phi(s,x),\ud y) \Lambda(x,\ud s)<\infty,
  \end{equation}
  and
  \begin{equation}\label{eq.local-m 2.Hunt}
    a(x,t)=-\int_{(0,t]}\int_E \bar{b}(\phi(s,x),y)\, Q(\phi(s,x),\ud y)\, \Lambda(x,\ud s).
  \end{equation}

  \textnormal{(ii)} $A$ is a special semimartingale prior to $\tau$ if and only if the associated $a$ has locally finite variation and \textnormal{(\ref{eq.local-m 1.Hunt})} is satisfied. In this case, $A$ has a canonical decomposition $A=M+B$, where $M$ is a local martingale prior to $\tau$ which is also an additive functional of $X$, and $B$ is a predictable additive functional of $X$ with $B_0=0$ and
  \begin{equation}\label{eq.predictable B.Hunt}
    B_t=B_{\tau_n}+a^*(X_{\tau_n},t-\tau_n),\quad t\in(\tau_n,\tau_{n+1}],\,n\in\N,
  \end{equation}
  where
  \begin{equation}
    a^*(x,t)=a(x,t)+\int_{(0,t]}\int_E \bar{b}(\phi(s,x),y)\,Q(\phi(s,x),\ud y)\,\Lambda(x,\ud s).
  \end{equation}
\end{theorem}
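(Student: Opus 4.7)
The plan is to obtain this as an immediate specialisation of Theorem \ref{thm.semimartingale} by rewriting $q(x,s,\ud y)$ and $b(x,s,y)$ in the quasi-Hunt case as $Q(\phi(s,x),\ud y)$ and $\bar{b}(\phi(s,x),y)$ respectively. The two algebraic inputs needed for this substitution are already available: Theorem \ref{thm.q=Q} gives $q(x,s,\ud y)=Q(\phi(s,x),\ud y)$ whenever $\phi(s,x)$ is not a confluent state, and the final statement of Theorem \ref{thm.A=a+b} gives the corresponding identity $b(x,s,y)=\bar{b}(\phi(s,x),y)$ under the same restriction.

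Next I would exploit the defining property of the quasi-Hunt case, namely that $\Lambda(x,\cdot)$ is continuous on $\mathcal{I}_x$ for every $x\in E$. This means the measure $\Lambda(x,\ud s)$ is atomless, hence assigns zero mass to any countable subset of $\mathcal{I}_x$, in particular to $\{s\in\mathcal{I}_x:\phi(s,x)\text{ is a confluent state}\}$. Therefore substituting $Q(\phi(s,x),\ud y)$ for $q(x,s,\ud y)$ and $\bar{b}(\phi(s,x),y)$ for $b(x,s,y)$ inside the $\Lambda(x,\ud s)$-integrals appearing in (\ref{eq.local-m 1}), (\ref{eq.local-m 2}) and (\ref{eq.predictable B}) only alters the integrand on a $\Lambda(x,\cdot)$-null set and leaves the integrals unchanged. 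The substituted statements are precisely (\ref{eq.local-m 1.Hunt}), (\ref{eq.local-m 2.Hunt}) and (\ref{eq.predictable B.Hunt}), so parts (i) and (ii) transfer verbatim from Theorem \ref{thm.semimartingale} in both directions.

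The step I expect to require the most care is the justification that the set of times at which $\phi(s,x)$ is a confluent state is $\Lambda(x,\cdot)$-negligible. If one takes it for granted that along a single trajectory $\Phi_x$ the confluent states form an at most countable set, continuity of $\Lambda(x,\cdot)$ in the quasi-Hunt case closes the argument; otherwise one needs an extra structural input. Everything else is a mechanical rewrite of Theorem \ref{thm.semimartingale} via Theorems \ref{thm.q=Q} and \ref{thm.A=a+b}, and this is presumably why the authors phrase the result as a direct corollary.
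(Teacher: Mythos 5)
Your route is the same one the paper takes: the paper gives no proof beyond the remark that the result is a direct corollary of Theorem \ref{thm.semimartingale}, and the substitution you describe --- $q(x,s,\ud y)\rightarrow Q(\phi(s,x),\ud y)$ via Theorem \ref{thm.q=Q} and $b(x,s,y)\rightarrow\bar{b}(\phi(s,x),y)$ via the last assertion of Theorem \ref{thm.A=a+b} --- is exactly the intended mechanism. You have also correctly isolated the only step that is not a mechanical rewrite.

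That step, however, is a genuine gap, and it cannot be closed by the countability argument you hope for. Both identities are guaranteed only at non-confluent states, and the set $C_x=\{s\in\mathcal{I}_x:\phi(s,x)\ \mathrm{is\ confluent}\}$ need not be countable. For a flow in which a one-parameter family of trajectories merges into a common trajectory --- e.g.\ $\phi(t,(u,v))=(u+t,\max(v-t,0))$ on the closed upper half-plane --- every point of the limiting trajectory past the first merge is a confluent state in the sense of Definition \ref{def.confluent state}, so $C_x$ contains a whole interval. Continuity of $\Lambda(x,\cdot)$ in the quasi-Hunt case (atomlessness) annihilates countable exceptional sets but not intervals, so the integrands in \textnormal{(\ref{eq.local-m 1})}--\textnormal{(\ref{eq.local-m 2})} and in \textnormal{(\ref{eq.local-m 1.Hunt})}--\textnormal{(\ref{eq.local-m 2.Hunt})} can differ on a set of positive $\Lambda(x,\cdot)$-measure, in which case only the former pair correctly characterizes the local-martingale property. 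To make the corollary true one needs an extra structural hypothesis --- for instance that the confluent times form a $\Lambda(x,\cdot)$-null set along every path, or the blanket assumption that the relevant jump set contains no confluent state, which the paper does impose in the quasi-step analogue (Corollary \ref{cor.semimartingale.step}) but omits here. So your proof is at least as complete as the paper's; your flagged reservation is precisely where the missing hypothesis belongs, and it should be stated rather than hoped away.
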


\begin{corollary}\label{cor.semimartingale.step}
  Let $A$ be an additive functional of a quasi-step PDMP $X$ with the characteristic triple $(\phi,\Lambda,Q)$, and $a,\bar{b}$ the functions associated with $A$ as Theorem \textnormal{\ref{thm.A=a+b}}. Assume that $J_\Lambda$ contains no confluent state.


  \textnormal{(i)} $A$ is a local martingale prior to $\tau$ if and only if for any $x\in E$ and $t\in\mathcal{I}_x$, we have
  \begin{equation}\label{eq.local-m 1.step}
    \sum_{0<s\leqslant t}\Delta\Lambda(\phi(s,x))\int_E \big|\bar{b}(\phi(s,x),y)\big| Q(\phi(s,x),\ud y)<\infty,
  \end{equation}
  and
  \begin{equation}\label{eq.local-m 2.step}
    a(x,t)=-\sum_{0<s\leqslant t}\Delta\Lambda(\phi(s,x))
    \int_E \bar{b}(\phi(s,x),y)\,Q(\phi(s,x),\ud y).
  \end{equation}

  \textnormal{(ii)} $A$ is a special semimartingale prior to $\tau$ if and only if the corresponding $a$ has locally finite variation and \textnormal{(\ref{eq.local-m 1.step})} is satisfied. In this case, $A$ has a canonical decomposition $A=M+B$, where $M$ is a local martingale prior to $\tau$ which is also an additive functional of $X$, and $B$ is a predictable additive functional of $X$ with $B_0=0$ and
  \begin{equation}\label{eq.predictable B.step}
    B_t=B_{\tau_n}+a^*(X_{\tau_n},t-\tau_n),\quad t\in(\tau_n,\tau_{n+1}],\,n\in\N,
  \end{equation}
  where
  \begin{equation}
    a^*(x,t)=a(x,t)+\sum_{0<s\leqslant t}\Delta\Lambda(\phi(s,x))\int_E \bar{b}(\phi(s,x),y)\,Q(\phi(s,x),\ud y).
  \end{equation}
\end{corollary}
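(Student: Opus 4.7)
The proof plan is to read this off as a direct specialization of Theorem \ref{thm.semimartingale}, using the quasi-step characterization of $\Lambda$ together with Theorem \ref{thm.q=Q} and the second half of Theorem \ref{thm.A=a+b} to replace $\Lambda(x,\ud s)$-integrals by sums and to replace $q(x,s,\cdot)$ and $b(x,s,\cdot)$ by $Q(\phi(s,x),\cdot)$ and $\bar b(\phi(s,x),\cdot)$ on the support of $\Lambda(x,\ud s)$.

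First I would exploit the quasi-step hypothesis. By the theorem characterizing quasi-step PDMPs, $\Lambda(x,\cdot)$ is a step function, so as a Stieltjes measure $\Lambda(x,\ud s)$ is purely atomic and concentrated on $\{s\in\mathcal{I}_x:\phi(s,x)\in J_\Lambda\}$. Consequently, for any nonnegative measurable integrand $g$,
\begin{equation*}
  \int_{(0,t]} g(s)\,\Lambda(x,\ud s)
  \;=\;\sum_{0<s\leqslant t}g(s)\,\Delta\Lambda(x,s),\qquad t\in\mathcal{I}_x.
\end{equation*}
Since by hypothesis $J_\Lambda$ contains no confluent state, Lemma \ref{lem.a.discrete} applied to the additive functional $\Lambda\in\mathfrak{A}_\phi^{loc}$ yields $\Delta\Lambda(x,s)=\Delta\Lambda(\phi(s,x))$ at every atom. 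Moreover, at every such atom $\phi(s,x)$ fails to be a confluent state, so Theorem \ref{thm.q=Q} gives $q(x,s,\ud y)=Q(\phi(s,x),\ud y)$ and the second part of Theorem \ref{thm.A=a+b} gives $b(x,s,y)=\bar b(\phi(s,x),y)$. All three substitutions are therefore valid $\Lambda(x,\ud s)$-almost everywhere.

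Part (i) then follows by plugging these identifications into the conditions (\ref{eq.local-m 1}) and (\ref{eq.local-m 2}) of Theorem \ref{thm.semimartingale}: the integrability criterion becomes (\ref{eq.local-m 1.step}) and the defining formula for $a$ becomes (\ref{eq.local-m 2.step}). For part (ii), the general special-semimartingale characterization from Theorem \ref{thm.semimartingale}(ii) already furnishes the canonical decomposition $A=M+B$ together with the predictability of $B$ and the formula (\ref{eq.predictable B.step}); the same substitutions convert the compensator $a^*$ of the general statement into the quasi-step version displayed in the corollary.

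The only technical point requiring care is justifying the pointwise substitutions $q(x,s,\cdot)=Q(\phi(s,x),\cdot)$ and $b(x,s,\cdot)=\bar b(\phi(s,x),\cdot)$ at every atom of $\Lambda(x,\cdot)$, and this is precisely what the no-confluent-state hypothesis on $J_\Lambda$ supplies. Once this is in place, the result is essentially a change of variables in the Stieltjes integrals appearing in Theorem \ref{thm.semimartingale}, so I do not anticipate any genuine obstacle beyond bookkeeping.
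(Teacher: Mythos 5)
Your proposal is correct and follows essentially the same route as the paper, which simply invokes Theorem \ref{thm.semimartingale} together with the step-function form $\Lambda(x,t)=\sum_{0<s\leqslant t}\Delta\Lambda(\phi(s,x))$. In fact you are more careful than the paper's one-line proof, since you explicitly justify the substitutions $q(x,s,\cdot)=Q(\phi(s,x),\cdot)$ and $b(x,s,\cdot)=\bar b(\phi(s,x),\cdot)$ at the atoms via the no-confluent-state hypothesis on $J_\Lambda$ (Theorem \ref{thm.q=Q}, Theorem \ref{thm.A=a+b} and Lemma \ref{lem.a.discrete}), a point the paper leaves implicit.
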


\begin{proof}
  Following from Theorem \ref{thm.semimartingale}, and noticing that
  $$\Lambda(x,t)=\sum_{0<s\leqslant t}\Delta\Lambda(\phi(s,x)),\quad t\in\mathcal{I}_x,\,x\in E$$
  for the quasi-step PDMP $X$, we get the conclusions.
\end{proof}

\begin{definition}
  A local martingale $M=\{M_t\}_{0\leqslant t<\tau}$ prior to $\tau$ is called to be \emph{additive} if for any stopping time $T$,
  $$M_0=0,\quad M_{T+s}=M_T+M_s\circ\theta_T\quad \hbox{a.s. on }\{T<\infty\}.$$
\end{definition}

\begin{theorem}
  Let $X$ be a general PDMP. Assume that $J_\Lambda$ contains no confluent state. $X$ is quasi-step if and only if for each $x\in E$ all the additive $\Prb_x$-local martingales prior to $\tau$ are step processes.
\end{theorem}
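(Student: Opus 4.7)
The plan is to use the characterizations developed in Theorem \ref{thm.A=a+b} and Theorem \ref{thm.semimartingale} to reduce the question about additive local martingales to a purely analytic question about the structure of the characteristic $\Lambda$. The key observation is that, for any additive local martingale $M$ prior to $\tau$ with associated pair $(a,b)$ furnished by Theorem \ref{thm.A=a+b}, the function $a$ is determined from $b$ through the identity
$$a(x,t)=-\int_{(0,t]}\int_E b(x,s,y)\,q(x,s,\ud y)\,\Lambda(x,\ud s)$$
of Theorem \ref{thm.semimartingale}(i), so that $a(x,\cdot)$ inherits the jump-versus-continuous structure of $\Lambda(x,\cdot)$.

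For the necessity, suppose $X$ is quasi-step, so $\Lambda(x,\cdot)$ is a step function for every $x\in E$. Let $M$ be any additive $\Prb_x$-local martingale prior to $\tau$ with associated pair $(a,b)$. Since the Stieltjes measure $\Lambda(x,\ud s)$ is purely atomic, the defining integral above collapses to
$$a(x,t)=-\sum_{0<s\leqslant t}\Delta\Lambda(\phi(s,x))\int_E b(x,s,y)\,q(x,s,\ud y),$$
which is a step function in $t$. Substituting this into formula (\ref{eq.A=a+b}) shows that on each interval $(\tau_n,\tau_{n+1}]$ the path $t\mapsto M_t$ is piecewise constant with jumps only at the times $\tau_n+s$ where $s$ is a jump-point of $\Lambda(X_{\tau_n},\cdot)$, together with the extra jump at $\tau_{n+1}$ coming from the $b$-term. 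Hence $M$ is a step process.

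For the sufficiency I would argue by contraposition. Suppose $X$ is not quasi-step; by the earlier characterisation there exists $x_0\in E$ such that $\Lambda(x_0,\cdot)$ fails to be a step function on $\mathcal{I}_{x_0}$. Take the constant choice $b(x,s,y)\equiv 1$, which trivially satisfies the invariance (\ref{eq.b.invariant}), and set $a(x,t):=-\Lambda(x,t)$. By Corollary \ref{cor.Lambda.Lebesgue} we have $\Lambda\in\mathfrak{A}_\phi^{loc}$, so the integrability condition (\ref{eq.local-m 1}) reduces to $\Lambda(x,t)<\infty$ for $t\in\mathcal{I}_x$, which is automatic, while (\ref{eq.local-m 2}) holds by construction. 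Theorem \ref{thm.semimartingale}(i) then produces an additive local martingale $M$ via (\ref{eq.A=a+b}) with these associated $(a,b)$. Under $\Prb_{x_0}$, however, $M_t=-\Lambda(x_0,t)$ for $t\in[0,\tau_1)$, which is not a step function, contradicting the assumption that every additive $\Prb_{x_0}$-local martingale prior to $\tau$ is a step process.

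The main delicacy I expect concerns the consistency of the integrand against $q$ at $\Lambda$-jump points in the necessity half. The sufficiency half side-steps this entirely by choosing the constant $b\equiv 1$, so neither the representation via $\bar b$ from Theorem \ref{thm.A=a+b} nor the factorisation $q(x,s,\cdot)=Q(\phi(s,x),\cdot)$ is invoked. In the necessity direction the standing hypothesis that $J_\Lambda$ contains no confluent state is precisely what guarantees, through Theorem \ref{thm.q=Q}, that $q(x,s,\cdot)$ is well defined through the flow at every atom of $\Lambda(x,\cdot)$, so the atomic sum for $a(x,t)$ is unambiguous and the step-process conclusion for $M$ is clean.
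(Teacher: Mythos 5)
Your proof is correct and follows essentially the same route as the paper: both directions rest on the representation of an additive local martingale through the pair $(a,b)$ of Theorem \ref{thm.A=a+b} and the identity (\ref{eq.local-m 2}) of Theorem \ref{thm.semimartingale}(i). The necessity half is the paper's argument verbatim in substance: when $\Lambda(x,\cdot)$ is purely atomic the compensating integral collapses to a sum over the atoms, so $a(x,\cdot)$ and hence $M$ are step processes. Where you genuinely diverge is the converse. The paper argues for an \emph{arbitrary} local martingale that ``if $M$ is a step process, then the integral term must be a summation, that is, $\Lambda=\Lambda^{pd}$''; as stated this is incomplete, since for a particular $M$ with $\int_E b(x,s,y)\,q(x,s,\ud y)\equiv 0$ the integral vanishes and imposes no constraint on $\Lambda$. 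Your contrapositive with the explicit witness $b\equiv 1$, $a=-\Lambda$ (i.e.\ the compensated jump-counting martingale) supplies exactly the missing instance and makes the converse airtight; the verification of (\ref{eq.local-m 1}) and (\ref{eq.local-m 2}) for this choice is immediate from Corollary \ref{cor.Lambda.Lebesgue}. One small correction: the no-confluent-state hypothesis on $J_\Lambda$ is not what your necessity argument needs --- you never pass to the state-indexed objects $\bar b$ and $Q$, so Theorem \ref{thm.q=Q} is not invoked; that hypothesis is required only for the paper's formulation via Corollary \ref{cor.semimartingale.step}, which writes the atomic sum as $\sum_{0<s\leqslant t}\Delta\Lambda(\phi(s,x))\int_E\bar b(\phi(s,x),y)\,Q(\phi(s,x),\ud y)$ and needs $\Delta\Lambda$ and $\bar b$ to be well defined as functions of the current state.
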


\begin{proof}
  If $X$ is a quasi-step PDMP, we have $F=F^{pd}$, or equivalently, $\Lambda=\Lambda^{pd}$. Then, following from Theorem \ref{cor.semimartingale.step} (i), for each $x\in E$, a $\Prb_x$-local martingale $M$ can be represented by
  \begin{align*}
    M_t= & -\sum_{0<s\leqslant t}\Delta\Lambda(X_s^-)\int_E\bar{b}(X_s^-,y)\,Q(X_s^-,\ud y)+\sum_{n=1}^{N_t}\bar{b}(X_{\tau_n}^-,X_{\tau_n}),\quad t\in[0,\tau),
  \end{align*}
  which is a step process.

  Conversely, let $X$ be a general PDMP. For any $\Prb_x$-local martingale $M$ prior to $\tau$, by Corollary \ref{thm.semimartingale} (i), we have
  \begin{align*}
    M_t= & M_{\tau_n}-\int_{(\tau_n,t]}\int_Eb(X_{\tau_n},s-\tau_n,y)\,q(X_{\tau_n},s-\tau_n,\ud y)\,\Lambda(X_{\tau_n},\ud s-\tau_n) \\
    & +b(X_{\tau_n},\tau_{n+1}-\tau_n,X_{\tau_{n+1}})\,\bbbone_{\{t=\tau_{n+1}\}},
    \qquad\qquad t\in(\tau_n,\tau_{n+1}],
  \end{align*}
  for any $n\in\N$. If $M$ is a step process, then the second term must be a summation along $(\tau_n,t]$, that is, $\Lambda=\Lambda^{pd}$. This completes the proof.
\end{proof}

\section{Measure-valued generator for PDMPs}

\subsection{Measure-valued generater}

\begin{definition}
  A measurable function $f:\bar{E}\mapsto\R$ is called to be of \emph{locally path-finite variation} if $f(\phi(\cdot,x))$ is right-continuous and of finite variation on any compact subinterval of $\mathcal{I}_x$ for any fixed $x\in E$. And denote by $\mathfrak{V}_{\phi}^{loc}$, the set of all locally path-finite variation function $f$'s.
\end{definition}

For a given $f\in\mathfrak{V}_{\phi}^{loc}$, define an operator $D$ by
$$Df(x,t):=f(\phi(t,x))-f(x),\quad t\in\mathcal{I}_x,\,x\in E.$$
Apparently, $f\in\mathfrak{V}_{\phi}^{loc}$ if and only if $Df\in \mathfrak{A}_{\phi}^{loc}$.

\begin{lemma}\label{lem.f-f is A}
  Let $X=\{X_t\}_{0\leqslant t<\tau}$ be a general PDMP, and $f:\bar{E}\mapsto\R$ a measurable function. Then the process $\{f(X_t)-f(X_0)\}_{0\leqslant t<\tau}$ is an additive functional of $X$ with
  \begin{equation}\label{eq.A^f=Df+bf}
    f(X_t)=f(X_{\tau_n})+Df(X_{\tau_n},t-\tau_n)+[f(X_{\tau_{n+1}})-f(X_{\tau_{n+1}}^-)]\, \bbbone_{\{t=\tau_{n+1}\}}
  \end{equation}
  for $t\in(\tau_n,\tau_{n+1}]$, $n\in\N$. Furthermore, $\{f(X_t)-f(X_0)\}_{0\leqslant t<\tau}$ is a.s. of locally finite variation on $[0,\tau)$ if and only if $f\in\mathfrak{V}_{\phi}^{loc}$.
\end{lemma}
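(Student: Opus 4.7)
The plan is to verify the pointwise representation (\ref{eq.A^f=Df+bf}) first, deduce from it that $\{f(X_t)-f(X_0)\}$ satisfies Definition \ref{def.A.PDMP}, and then transfer the locally finite variation property along the random jump structure to the deterministic condition $f\in\mathfrak{V}_\phi^{loc}$.

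First I would establish (\ref{eq.A^f=Df+bf}) by direct computation. By the piecewise-deterministic representation (\ref{eq.PiecewiseDeterministic}), on $t\in(\tau_n,\tau_{n+1})$ we have $X_t=\phi(t-\tau_n,X_{\tau_n})$, so $f(X_t)=f(\phi(t-\tau_n,X_{\tau_n}))=f(X_{\tau_n})+Df(X_{\tau_n},t-\tau_n)$ by the very definition of the operator $D$. At $t=\tau_{n+1}$ the process jumps from $X_{\tau_{n+1}}^-=\phi(\tau_{n+1}-\tau_n,X_{\tau_n})$ to $X_{\tau_{n+1}}$, producing the indicator term $[f(X_{\tau_{n+1}})-f(X_{\tau_{n+1}}^-)]\bbbone_{\{t=\tau_{n+1}\}}$. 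This incidentally identifies the pair $(a,b)$ of Theorem \ref{thm.A=a+b} as $a(x,t)=Df(x,t)$ and $b(x,t,y)=f(y)-f(\phi(t,x))$, the latter satisfying (\ref{eq.b.invariant}) by the semiflow property. With (\ref{eq.A^f=Df+bf}) in hand, adaptedness follows from measurability of $f$, the initial value vanishes trivially, and the shift-additivity for any stopping time $T$ telescopes as $f(X_{T+s})-f(X_0)=[f(X_T)-f(X_0)]+[f(X_{T+s})-f(X_T)]$ with the second bracket equal to $A_s\circ\theta_T$ since $X_{T+s}=X_s\circ\theta_T$.

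For the locally-finite-variation equivalence I would work path-by-path. If $f\in\mathfrak{V}_\phi^{loc}$, then on each $(\tau_n,\tau_{n+1})$ the function $t\mapsto Df(X_{\tau_n},t-\tau_n)$ is right-continuous and of finite variation on every compact subinterval (by definition of $\mathfrak{V}_\phi^{loc}$ applied at $x=X_{\tau_n}$), and the terminal jump at $\tau_{n+1}$ contributes one atom of variation; since a compact subinterval of $[0,\tau)$ meets only finitely many of the $(\tau_n,\tau_{n+1}]$, summing gives finite total variation almost surely, which simultaneously supplies the c\`adl\`ag property needed to make $\{f(X_t)-f(X_0)\}$ a bona fide additive functional in the sense of Definition \ref{def.A.PDMP}. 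For the converse, I would fix an arbitrary $x\in E$ and a compact $[0,t]\subset\mathcal{I}_x$ with $t<c(x)$; then $F(x,t)>0$, so $\Prb_x\{\tau_1>t\}>0$, and on this event $f(X_s)-f(X_0)=f(\phi(s,x))-f(x)$ for $s\in[0,t]$.

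The main obstacle is precisely this ``only if'' direction, where I must convert an a.s.\ path property (carrying the randomness in $\tau_1$) into a purely deterministic statement about $f\circ\phi(\cdot,x)$ for every $x$. The resolution is that $s\mapsto f(\phi(s,x))$ is non-random, so if the process is $\Prb_x$-a.s.\ of locally finite variation and right-continuous on $[0,\tau)$, then its restriction to the positive-probability event $\{\tau_1>t\}$ forces $f(\phi(\cdot,x))$ itself to be right-continuous and of finite variation on $[0,t]$; letting $t\uparrow c(x)$ (and including $c(x)$ by right-continuity when $F(x,c(x)-)>0$) yields $f\in\mathfrak{V}_\phi^{loc}$.
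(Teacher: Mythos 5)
Your proposal is correct and follows essentially the same route as the paper: the pointwise computation of $f(X_t)$ on $(\tau_n,\tau_{n+1})$ and at $\tau_{n+1}$, the identification $a_f=Df$, $b_f(x,t,y)=f(y)-f(\phi(t,x))$, and the reduction of the variation statement to $Df\in\mathfrak{A}_\phi^{loc}$. The only difference is that you verify Definition \ref{def.A.PDMP} directly by telescoping rather than citing the sufficiency of Theorem \ref{thm.A=a+b}, and you actually argue the ``only if'' direction via the positive-probability event $\{\tau_1>t\}$ with $F(x,t)>0$ for $t<c(x)$ --- a step the paper asserts without proof --- which is a genuine improvement in rigor rather than a different method.
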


\begin{proof}
  As $\tau_n<t<\tau_{n+1}$,
  $$f(X_t)-f(X_{\tau_n})=f(\phi(t-\tau_n,X_{\tau_n}))-f(X_{\tau_n})=Df(X_{\tau_n},t-\tau_n);$$
  and, in view of $X_{\tau_{n+1}}^-=\phi(\tau_{n+1}-\tau_n,X_{\tau_n})$, we have
  \begin{align*}
    f(X_{\tau_{n+1}})-f(X_{\tau_n})&=f(X_{\tau_{n+1}})-f(X_{\tau_{n+1}}^-)+f(X_{\tau_{n+1}}^-)-f(X_{\tau_n})\\
  &=f(X_{\tau_{n+1}})-f(X_{\tau_{n+1}}^-)+f(\phi(\tau_{n+1}-\tau_n,X_{\tau_n})-f(X_{\tau_n})\\
  &=f(X_{\tau_{n+1}})-f(X_{\tau_{n+1}}^-)+Df(X_{\tau_n},\tau_{n+1}-\tau_n).
  \end{align*}
  Hence, (\ref{eq.A^f=Df+bf}) holds.

  Let $A_t^f:=f(X_t)-f(X_0)$ for $t\in[0,\tau)$. It is obvious that
  $$A_t^f=A_{\tau_n}^f+a_f(X_{\tau_n},t-\tau_n)+b_f(X_{\tau_n},\tau_{n+1}-\tau_n,X_{\tau_{n+1}}) \bbbone_{\{t=\tau_{n+1}\}}$$
  holds for $t\in(\tau_n,\tau_{n+1}]$, $n\in\N$.
  It follows from Theorem \ref{thm.A=a+b} with $a_f=Df\in \mathfrak{A}_{\phi}$ and $b_f(x,t,y)=f(y)-f(\phi(t,x))$ that the process $\{f(X_t)-f(X_0)\}_{0\leqslant t<\tau}$ is an additive functional of $X$.

  Moreover, the sufficient and necessary condition of $A^f$ being a.s. of locally finite variation on $[0,\tau)$ is that $Df\in\mathfrak{A}_\phi^{loc}$, which is equivalent to that $f\in\mathfrak{V}_\phi^{loc}$.
\end{proof}

\begin{definition}\label{def.measure-valued generator}
  Let $X=\{X_t\}_{0\leqslant t<\tau}$ be a general PDMP with characteristic triple $(\phi,F,q)$. $\D(\A)$ denotes the set of $f\in\mathfrak{V}_{\phi}^{loc}$ with the following property: there exists $a\in \mathfrak{A}_{\phi}^{loc}$ such that the process
  \begin{equation}\label{eq.generator.def}
    M_t^f:=f(X_t)-f(X_0)-\sum_{n=0}^{N_t} a(X_{\tau_n},t\land\tau_{n+1}-\tau_n), \quad t<\tau
  \end{equation}
  is a local martingale prior to $\tau$. 
  Then we denote that $a=\A f$, and call $(\A,\D(\A))$ the \emph{measure-valued generator} of the general PDMP $X$.
\end{definition}

For $a\in \mathfrak{A}_{\phi}^{loc}$, because $a(x,t)$ is of locally finite variation with respect to $t$, so $a(x,\cdot)$ can also be treated as a $\sigma$-finite signed measure on $\mathcal{I}_x$ for all $x\in E$. This is the reason why we call the operator $\A$ is measure-valued.

\begin{theorem}\label{thm.measure-valued generator}
  Let $X$ be a general PDMP. Then the domain $\D(\A)$ of the measure-valued generator $\A$ of $X$ consists of all $f\in\mathfrak{V}_{\phi}^{loc}$ such that for any $x\in E$, $t\in\mathcal{I}_x$,
  \begin{equation}\label{eq.domain.A-general}
    \int_{(0,t]}\int_E\big|f(y)-f(\phi(s,x))\big|q(x,s,\ud y)\Lambda(x,\ud s)<\infty.
  \end{equation}
  Moreover, if $f\in\D(\A)$, then for any $x\in E$ and $t\in\mathcal{I}_x$,
  \begin{equation}\label{eq.measure-valued generator}
    \A f(x,t)=Df(x,t)+\int_{(0,t]}\int_E\big[f(y)-f(\phi(t,x))\big]q(x,s,\ud y)\Lambda(x,\ud s).
  \end{equation}
\end{theorem}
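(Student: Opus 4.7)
The plan is to reduce the theorem to Theorem \ref{thm.semimartingale}(i) by rewriting $M^f$ as an additive functional of $X$ and reading off its associated pair $(a_M, b_M)$ in the sense of Theorem \ref{thm.A=a+b}. The key observation is that every ingredient in the defining identity (\ref{eq.generator.def}) is already known to be an additive functional of $X$, so the local-martingale question translates directly into the setting of Theorem \ref{thm.semimartingale}.

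First, by Lemma \ref{lem.f-f is A}, the process $N_t := f(X_t) - f(X_0)$ is an additive functional of $X$ with associated pair $(Df,\; f(y) - f(\phi(s,x)))$; note that $Df \in \mathfrak{A}_\phi^{loc}$ precisely because $f \in \mathfrak{V}_\phi^{loc}$. Second, the compensator-candidate
\[ C_t := \sum_{n=0}^{N_t} a\bigl(X_{\tau_n},\, t\wedge\tau_{n+1}-\tau_n\bigr) \]
is a predictable additive functional of $X$: its additivity follows from the SDS identity $a(x,s)+a(\phi(s,x),t)=a(x,s+t)$ together with the strong Markov behaviour at the $\tau_n$, and by Corollary \ref{cor.A.pred=a} its Theorem-\ref{thm.A=a+b} pair is $(a,0)$. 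Because the class of additive functionals of $X$ is closed under linear combinations, $M^f = N - C$ is an additive functional of $X$ with pair
\[ (a_M,\, b_M) = \bigl(Df - a,\; f(y) - f(\phi(s,x))\bigr). \]

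Next I invoke Theorem \ref{thm.semimartingale}(i) for $M^f$. The integrability condition (\ref{eq.local-m 1}) in that theorem becomes
\[ \int_{(0,t]}\!\int_E \bigl|f(y) - f(\phi(s,x))\bigr|\, q(x,s,\ud y)\,\Lambda(x,\ud s) < \infty, \]
which is exactly the domain condition (\ref{eq.domain.A-general}); the algebraic identity (\ref{eq.local-m 2}) then reads
\[ (Df - a)(x,t) = -\int_{(0,t]}\!\int_E \bigl[f(y) - f(\phi(s,x))\bigr]\, q(x,s,\ud y)\,\Lambda(x,\ud s), \]
and solving for $a(x,t)$ recovers the announced formula (\ref{eq.measure-valued generator}) for $\mathcal{A}f(x,t)$.

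The main technical obstacle I anticipate is checking carefully that $C$ satisfies the full stopping-time shift property $A_{T+s} = A_T + A_s\circ\theta_T$ of Definition \ref{def.A.PDMP}, so that $M^f$ does as well and Theorems \ref{thm.A=a+b} and \ref{thm.semimartingale} legitimately apply. This should be a routine but somewhat delicate bookkeeping argument: using the strong Markov property from Theorem \ref{thm.PDP=>str.Markov} together with the structure of the discrete-type (jumping) filtration, one propagates the SDS-additivity of $a$ at deterministic times to all stopping times by splitting according to $\{\tau_n < T \leqslant \tau_{n+1}\}$. Once this additivity is in place, the remaining content of the theorem is purely algebraic.
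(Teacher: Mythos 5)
Your proposal is correct and follows essentially the same route as the paper's own proof: both identify $M^f$ as an additive functional of $X$ with associated pair $(Df-\A f,\; f(y)-f(\phi(t,x)))$ via Lemma \ref{lem.f-f is A} and Theorem \ref{thm.A=a+b}, and then read off the domain condition and the formula for $\A f$ from the local-martingale characterization in Theorem \ref{thm.semimartingale}. The stopping-time additivity of the compensator term that you flag as the main obstacle is already covered by the (easy) sufficiency direction of Theorem \ref{thm.A=a+b}, so no extra bookkeeping is needed.
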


\begin{proof}
  Suppose that $f\in\D(\A)$. Then
  $$M^f_t=f(X_t)-f(X_0)-\sum_{n=0}^{N_t}\A f(X_{\tau_n},t\land\tau_{n+1}-\tau_n),\quad t<\tau$$
  is a local martingale prior to $\tau$. Moreover, by Lemma \ref{lem.f-f is A},
  \begin{align*}
    M^f_t=&\sum_{n=0}^{N_t}Df(X_{\tau_n},t\land\tau_{n+1}-\tau_n)      +\sum_{n=1}^{N_t}[f(X_{\tau_n})-f(X_{\tau_n}^-)] \\
          &-\sum_{n=0}^{N_t}\A f(X_{\tau_n},t\land\tau_{n+1}-\tau_n).
  \end{align*}
   Meantime, both $Df$ and $\A f$ are additive functional of $\phi$ in $\mathfrak{V}_{\phi}^{loc}$. According to Theorem \ref{thm.A=a+b}, $M^f$ is an additive functional of $X$ with
  \begin{align*}
    & a_M(x,t)=Df(x,t)-\A f(x,t),\\
    & b_M(x,t,y)=f(y)-f(\phi(t,x)).
  \end{align*}
  By Theorem \ref{thm.semimartingale} (ii), since the additive functional $M^f$ is a local martingale prior to $\tau$, then we have (\ref{eq.domain.A-general}) and
  $$a_M(x,t)=-\int_{(0,t]}\int_E\big[f(y)-f(\phi(s,x))\big]q(x,s,\ud y)\Lambda(x,\ud s),$$
  which is equivalent to (\ref{eq.measure-valued generator}).

  Conversely, if (\ref{eq.domain.A-general}) is satisfied with $f\in\mathfrak{V}_{\phi}^{loc}$, then $\A f\in\mathfrak{A}_{\phi}^{loc}$. Furthermore, by Lemma \ref{lem.f-f is A}, we have
  \begin{align*}
    M^f_t=&f(X_t)-f(X_0)-\sum_{n=0}^{N_t}\A f(X_{\tau_n},t\land\tau_{n+1}-\tau_n)\\
         =&-\sum_{n=0}^{N_t}\int_{(\tau_n,t\land\tau_{n+1}]}\int_E[f(y)-f(X_s^-)] q(X_{\tau_n},s-\tau_n,\ud y)\Lambda(X_{\tau_n},\ud s-\tau_n)\\
          &+\sum_{n=1}^{N_t}[f(X_{\tau_n})-f(X_{\tau_n}^-)],\qquad\qquad t<\tau.
  \end{align*}
  By Theorem \ref{thm.semimartingale} (ii), it follows from (\ref{eq.domain.A-general}) that $M^f$ is a local martingale prior to $\tau$. Hence $f\in\D(\A)$ and then $(\A,\D(\A))$ is the measure-valued generator of $X$.
\end{proof}

Based on Theorem \ref{thm.measure-valued generator}, we give the It\^o formula for a general PDMP.

\begin{theorem}[\bf The It\^o formula]
  Let $X$ be a general PDMP. If $f\in\D(\A)$, then for $t\in[0,\tau)$,
  \begin{align}\label{eq.Ito's formula}
    f(X_t)-f(X_0)=&\sum_{n=0}^{N_t}\A f(X_{\tau_n},t\land\tau_{n+1}-\tau_n)\nonumber\\
                 &+\int_{(0,t]\times E}\!\!\!\big[f(y)-f(X_s^-)\big](\mu-\nu)(\ud s,\ud y),
  \end{align}
  where the last item on the right side is a local martingale prior to $\tau$. Especially, if
  \begin{equation}\label{eq.martingale condition}
    \Ep_x\Big[\sum_{n=1}^\infty\big|f(X_{\tau_n})-f(X_{\tau_n}^-)\big|\Big]<\infty,\quad x\in E,
  \end{equation}
  then the last item on the right side of \textnormal{(\ref{eq.Ito's formula})} is a martingale.
\end{theorem}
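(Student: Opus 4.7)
The plan is to read off the It\^o formula directly from Definition \ref{def.measure-valued generator} together with the representation of $M^f$ obtained inside the proof of Theorem \ref{thm.measure-valued generator}, and then to upgrade the local martingale to a true martingale by a standard integrable-variation argument under the extra hypothesis (\ref{eq.martingale condition}).

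First, since $f\in\D(\A)$, the very definition of the measure-valued generator (Definition \ref{def.measure-valued generator}) says that the process
$$M^f_t:=f(X_t)-f(X_0)-\sum_{n=0}^{N_t}\A f(X_{\tau_n},t\land\tau_{n+1}-\tau_n)$$
is a local martingale prior to $\tau$. Rearranging immediately yields
$$f(X_t)-f(X_0)=\sum_{n=0}^{N_t}\A f(X_{\tau_n},t\land\tau_{n+1}-\tau_n)+M^f_t,$$
so the content is to identify $M^f_t$ with the compensated stochastic integral on the right-hand side of (\ref{eq.Ito's formula}).

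For that identification I would reuse the computation carried out in the converse direction of the proof of Theorem \ref{thm.measure-valued generator}, which expressed
\begin{align*}
M^f_t=&-\sum_{n=0}^{N_t}\int_{(\tau_n,t\land\tau_{n+1}]}\!\!\int_E[f(y)-f(X_s^-)]\,q(X_{\tau_n},s-\tau_n,\ud y)\,\Lambda(X_{\tau_n},\ud s-\tau_n)\\
&+\sum_{n=1}^{N_t}[f(X_{\tau_n})-f(X_{\tau_n}^-)].
\end{align*}
The second sum equals $\int_{(0,t]\times E}[f(y)-f(X_s^-)]\,\mu(\ud s,\ud y)$ by the definition (\ref{eq.mu}) of the jumping measure $\mu$, while the first (double-integral) sum equals $\int_{(0,t]\times E}[f(y)-f(X_s^-)]\,\nu(\ud s,\ud y)$ by the explicit form (\ref{eq.nu}) of the compensator $\nu$; piecing them together gives $M^f_t=\int_{(0,t]\times E}[f(y)-f(X_s^-)](\mu-\nu)(\ud s,\ud y)$, which is (\ref{eq.Ito's formula}) and is a local martingale prior to $\tau$ by Theorem \ref{thm.semimartingale}(i).

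For the final assertion, under the integrability condition (\ref{eq.martingale condition}) the random measure $|f(y)-f(X_s^-)|\,\mu(\ud s,\ud y)$ on $[0,\infty)\times E$ has finite $\Prb_x$-expectation, so by the definition of the predictable dual projection the same holds for $\nu$; hence both $\int[f(y)-f(X_s^-)]\mu$ and $\int[f(y)-f(X_s^-)]\nu$ are processes of integrable variation. Their difference is a local martingale dominated by an integrable process, and standard arguments (e.g.\ the Jacod–Shiryaev criterion for stochastic integrals against compensated jump measures) then promote it to a uniformly integrable martingale; this is the only step where anything beyond the previously established representation theorems is needed, and it is the part I expect to require the most care to write cleanly.
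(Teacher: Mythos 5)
Your proposal is correct and follows essentially the same route as the paper: the decomposition comes straight from Definition \ref{def.measure-valued generator}, the identification of $M^f$ with the compensated integral against $\mu-\nu$ is exactly what the paper obtains by citing Theorem \ref{thm.semimartingale}~(ii) (your explicit unpacking via the formulas (\ref{eq.mu}) and (\ref{eq.nu}) is just that citation written out), and your integrable-variation upgrade to a true martingale is the same standard criterion the paper invokes as Theorem~26.12 of Davis. No gaps.
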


\begin{proof}
  For $f\in\D(\A)$, by (\ref{eq.generator.def}),
  $$f(X_t)-f(X_0)=\sum_{n=0}^{N_t}\A f(X_{\tau_n},t\land\tau_{n+1}-\tau_n)+M^f_t,\quad 0\leqslant t<\tau,$$
  where $M^f$ is a local martingale prior to $\tau$. And, by Theorem \ref{thm.semimartingale} (ii),
  $$M_t^f=\int_{(0,t]\times E}\!\!\!\big[f(y)-f(X_s^-)\big](\mu-\nu)(\ud s,\ud y).$$
  Especially, if (\ref{eq.martingale condition}) holds, then, by \cite[Theorem 26.12]{davis1993markov}, $M^f$ is a martingale. And the proof is complete.
\end{proof}

\subsection{Extended generator and L-extended generator}

In the followings we intend by virtue of the measure-valued generator to extend the domain of two familiar extensions of generator: the extended generator introduced to PDMPs by Davis \cite{davis1984piecewise} and the L-extended generator introduced by Kunita \cite{Kunita1969Absolute}. And we will present their connections with the measure-valued generator. For convenience, we firstly give the classification of measurable functions defined on $\bar{E}$.

\begin{definition}
  A measurable function $f:\bar{E}\mapsto\R$ is called to be
  \begin{enumerate}[(i)]
    \item \emph{path-continuous} if $f(\phi(\cdot,x))$ is continuous on $\mathcal{I}_x$ for all $x\in E$;
    \item \emph{absolutely path-continuous} if $f(\phi(\cdot,x))$ is absolutely continuous on $\mathcal{I}_x$ for all $x\in E$;
    \item \emph{piecewise absolutely path-continuous} if the continuous part of the function $f(\phi(\cdot,x))$ is absolutely continuous on $\mathcal{I}_x$ for all $x\in E$;
    \item \emph{path-step} if $f(\phi(\cdot,x))$ is a step function on $\mathcal{I}_x$ for all $x\in E$.
  \end{enumerate}
\end{definition}

\subsubsection{Extended generator}

\begin{definition}\label{def.extended generator}
  Let $X=\{X_t\}_{0\leqslant t<\tau}$ be a general PDMP with characteristic triple $(\phi,\Lambda,Q)$. $\D(\A')$ denotes the set of $f\in\mathfrak{V}_{\phi}^{loc}$ with the following property: there exist a measurable function $h:E\mapsto\R$ such that 
  $$\int_0^t\big|h(X_s)\big|\ud s<\infty \quad\hbox{for }t\in[0,\tau)\quad\Prb_x\hbox{-a.s. for each }x\in E,$$
  and the process
  \begin{equation*}
    M'_t:=f(X_t)-f(X_0)-\int_0^th(X_s)\ud s,\quad 0\leqslant t<\tau
  \end{equation*}
  is a local martingale prior to $\tau$. Then we write $h=\A'f$. It is well known that $(\A',\D(\A'))$ is called the \emph{extended generator} of the process $X$ (see \cite{davis1993markov} Definition 14.15).
\end{definition}

\begin{theorem}\label{thm.A'.general}
  Let $X$ be a general PDMP with characteristic triple $(\phi,\Lambda,Q)$. Then the domain $\D(\A')$ of the extended generator $\A'$ of $X$ consists of all $f\in\mathfrak{V}_{\phi}^{loc}$
  such that for any $x\in E$, $t\in\mathcal{I}_x$,
  \begin{equation}\label{eq.domain.A'-general.Q}
    \int_{(0,t]}\int_E\big|f(y)-f(\phi(s,x))\big|Q(\phi(s,x),\ud y)\Lambda(x,\ud s)<\infty,
  \end{equation}
  with constraint conditions
  \begin{align}
    &D^{sc}f(x,t)+\!\int_{(0,t]}\int_E\big[f(y)-f(\phi(s,x))\big]Q(\phi(s,x),\ud y)\Lambda^{sc}(x,\ud s)=0,\label{eq.A^sc=0.general}\\
    &D^{pd}f(x,t)+\!\int_{(0,t]}\int_E\big[f(y)-f(\phi(s,x))\big]Q(\phi(s,x),\ud y)\Lambda^{pd}(x,\ud s)=0. \label{eq.A^pd=0.general}
  \end{align}
  For $f\in\D(\A')$, $\A'f$ is given by
  \begin{equation}\label{eq.A'f.general}
    \A'f(x)=\mathcal{X}f(x)+\lambda(x)\int_E[f(y)-f(x)]Q(x,\ud y),\quad x\in E,
  \end{equation}
  where
  $$\mathcal{X}f(x)=\left\{\begin{array}{ll}
       \displaystyle\frac{\partial^+f(\phi(t,x))}{\partial t}\bigg|_{t=0}, &
        \hbox{if }\displaystyle\frac{\partial^+f(\phi(t,x))}{\partial t}\bigg|_{t=0}
       \hbox{ exists;} \\
       0, & \hbox{otherwise,}
     \end{array}\right.$$
  and
  $$\lambda(x)=\left\{\begin{array}{ll}
        \displaystyle \frac{\partial^+\Lambda(x,t)}{\partial t}\bigg|_{t=0}, &
        \hbox{if }\displaystyle\frac{\partial^+\Lambda(x,t)}{\partial t}\bigg|_{t=0}\hbox{ exists;} \\
        0, & \hbox{otherwise.}
      \end{array}\right.$$
\end{theorem}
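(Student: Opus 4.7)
The plan is to deduce Theorem \ref{thm.A'.general} as a direct consequence of the measure-valued generator theory developed in Theorem \ref{thm.measure-valued generator}. The key observation is that an extended generator exists for $f$ if and only if $f\in\D(\A)$ and the measure-valued generator $\A f(x,\cdot)$ is absolutely continuous in $t$ with path-density $h(\phi(\cdot,x))$. Indeed, by Example \ref{ex.integrable&summable}(i), the process $\int_0^t h(X_s)\,\ud s$ appearing in Definition \ref{def.extended generator} is exactly $\sum_{n=0}^{N_t} a(X_{\tau_n},t\wedge\tau_{n+1}-\tau_n)$ with $a(x,t):=\int_0^t h(\phi(s,x))\,\ud s\in\mathfrak{A}_\phi^{loc}$. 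Hence $\D(\A')$ coincides with the subset of $\D(\A)$ consisting of those $f$ whose $\A f$ is absolutely path-continuous, and in that case $\A' f(x)=\mathcal{X}(\A f)(x)$.

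First I would invoke Theorem \ref{thm.measure-valued generator}, rewriting $q$ in terms of $Q$ via Theorem \ref{thm.q=Q} (so that the quoted $(\phi,\Lambda,Q)$ formalism applies, relying on the standing assumption that relevant paths avoid confluent states), to get that $f\in\D(\A)$ iff (\ref{eq.domain.A'-general.Q}) holds, in which case
\begin{equation*}
\A f(x,t)=Df(x,t)+\int_{(0,t]}\!\int_E [f(y)-f(\phi(s,x))]\,Q(\phi(s,x),\ud y)\,\Lambda(x,\ud s).
\end{equation*}
Next, using Theorem \ref{thm.a.Lebesgue} to decompose $Df=D^{ac}f+D^{sc}f+D^{pd}f$ and Corollary \ref{cor.Lambda.Lebesgue} to decompose $\Lambda=\Lambda^{ac}+\Lambda^{sc}+\Lambda^{pd}$, I would split the right-hand side pathwise into ac, sc, and pd components: the ac component is $D^{ac}f(x,t)+\int_{(0,t]}\!\int_E[f(y)-f(\phi(s,x))]\,Q(\phi(s,x),\ud y)\lambda(\phi(s,x))\,\ud s$, while the sc and pd components are obtained by replacing $\lambda(\phi(s,x))\,\ud s$ by $\Lambda^{sc}(x,\ud s)$ and $\Lambda^{pd}(x,\ud s)$ respectively, with $D^{sc}f$ and $D^{pd}f$ in the leading positions.

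By uniqueness of the Lebesgue decomposition on each path $\mathcal{I}_x$, requiring $\A f(x,\cdot)$ to be absolutely continuous is equivalent to the sc and pd components vanishing identically, which is precisely (\ref{eq.A^sc=0.general}) and (\ref{eq.A^pd=0.general}). Once these constraints hold, only the ac component survives; taking its right-derivative at $t=0$ via Lemma \ref{lem.a.r-derivative} and the definition (\ref{eq.Xa}) yields $\mathcal{X}(\A f)(x)=\mathcal{X}f(x)+\lambda(x)\int_E[f(y)-f(x)]\,Q(x,\ud y)$, and (\ref{eq.add.ac}) lets me recover (\ref{eq.A'f.general}) with $\A' f=h=\mathcal{X}(\A f)$. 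The converse direction is then immediate: under (\ref{eq.domain.A'-general.Q})-(\ref{eq.A^pd=0.general}), the same decomposition shows $\A f(x,t)=\int_0^t\A'f(\phi(s,x))\,\ud s$, so the martingale $M^f$ from Definition \ref{def.measure-valued generator} equals $M'$ of Definition \ref{def.extended generator}.

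The main technical obstacle will be the clean separation of the integral term into ac/sc/pd parts: I must verify that $\int_{(0,t]}g(x,s)\,\Lambda^{*}(x,\ud s)$ for $*\in\{ac,sc,pd\}$ is itself of Lebesgue type $*$ in $t$, uniformly in the integrand $g(x,s)=\int_E[f(y)-f(\phi(s,x))]\,Q(\phi(s,x),\ud y)$. This follows from the explicit representations in Corollary \ref{cor.Lambda.Lebesgue} (so that the ac piece is an ordinary Lebesgue integral, the pd piece is a pointwise sum, and the sc piece inherits singular continuity from $\Lambda^{sc}$), together with the integrability bound (\ref{eq.domain.A'-general.Q}) which guarantees all three pieces are well defined and of locally finite variation. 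The confluent-state hypothesis, inherited from Theorem \ref{thm.q=Q} and the path-representation results of Section 3, is what lets the $Q$-formulation apply pointwise along the trajectory.
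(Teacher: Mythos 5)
Your proposal is correct and follows essentially the same route as the paper: both proofs reduce the statement to the identity $\A f(x,t)=\int_0^t\A'f(\phi(s,x))\,\ud s$ (the paper derives it by identifying $a_{M'},b_{M'}$ and invoking Theorem \ref{thm.semimartingale}, you get it by noting that $\int_0^t h(X_s)\,\ud s$ is the additive functional built from $a(x,t)=\int_0^t h(\phi(s,x))\,\ud s$ and citing Theorem \ref{thm.measure-valued generator}), and then both extract the constraints \textnormal{(\ref{eq.A^sc=0.general})}--\textnormal{(\ref{eq.A^pd=0.general})} and the formula \textnormal{(\ref{eq.A'f.general})} from the uniqueness of the Lebesgue decomposition of $\A f$ along each path. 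Your version is a slightly more modular packaging of the same argument, and your final paragraph usefully spells out the ac/sc/pd splitting of the integral term that the paper states without detail.
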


\begin{proof}
  Suppose that $f\in\D(\A')$. Then
  \begin{align*}
  M'_t&=f(X_t)-f(X_0)-\int_0^t\A'f(X_s)\ud s\\
      &=f(X_t)-f(X_0)-\sum_{n=0}^{N_t}\int_{\tau_n}^{t\land\tau_{n+1}}\A' f(\phi(s-\tau_n,X_{\tau_n}))\ud s,\quad t<\tau
  \end{align*}
  is a local martingale prior to $\tau$. Moreover, by Lemma \ref{lem.f-f is A},
  \begin{align*}
  M'_t=&\sum_{n=0}^{N_t}Df(X_{\tau_n},t\land\tau_{n+1}-\tau_n)
         +\sum_{n=1}^{N_t}[f(X_{\tau_n})-f(X_{\tau_n}^-)]\\
       &-\sum_{n=0}^{N_t}\int_{\tau_n}^{t\land\tau_{n+1}}\A' f(\phi(s-\tau_n,X_{\tau_n}))\ud s, \quad t<\tau.
  \end{align*}
   Meantime, both $Df(x,t)$ and $\int_0^t\A' f(\phi(s,x))\ud s$ belong to $\mathfrak{A}_{\phi}^{loc}$. According to Theorem \ref{thm.A=a+b}, $M'$ is an additive functional of $X$ with
  \begin{align*}
    &a_{M'}(x,t)=Df(x,t)-\int_0^t\A' f(\phi(s,x))\ud s, \\
    &b_{M'}(x,t,y)=f(y)-f(\phi(t,x)). 
  \end{align*}
  Since the additive functional $M'$ is a local martingale prior to $\tau$, then we have (\ref{eq.domain.A'-general.Q}) by Theorem \ref{thm.semimartingale} (ii) and
  $$a_{M'}(x,t)=-\int_{(0,t]}\int_E\big[f(y)-f(\phi(s,x))\big]Q(\phi(s,x),\ud y)\Lambda(x,\ud s),$$
  that is,
  \begin{equation}\label{eq.Af=intA'f}
  \A f(x,t)=\int_0^t\A' f(\phi(s,x))\ud s\quad\hbox{ for all }x\in E,\, t\in\mathcal{I}_x.
  \end{equation}
  Consider the Lebesgue decomposition of $\A f=\A^{ac}f+\A^{sc}f+\A^{pd}f$. The the equation (\ref{eq.Af=intA'f}) is equivalent to (\ref{eq.A'f.general}) with (\ref{eq.A^sc=0.general}) and (\ref{eq.A^pd=0.general}).

  Conversely, if $f\in\mathfrak{V}_{\phi}^{loc}$, and (\ref{eq.domain.A'-general.Q}) with the constraint conditions (\ref{eq.A^sc=0.general}) and (\ref{eq.A^pd=0.general}) are satisfied, then $f\in \D(\A)$. Hence,
  $$M^f_t=f(X_t)-f(X_0)-\sum_{n=0}^{N_t}\A f(X_{\tau_n},t\land\tau_{n+1}-\tau_n),\qquad t<\tau$$
  is a local martingale prior to $\tau$. But, in this case, we have (\ref{eq.Af=intA'f}) which implies that
  $$M^f_t=f(X_t)-f(X_0)-\int_0^t\A'f(X_s)\ud s,\quad t<\tau$$
  is a local martingale prior to $\tau$. Hence $f\in\D(\A')$ and then $(\A',\D(\A'))$ is the extended generator of $X$.
\end{proof}

\begin{corollary}
  Let $X$ be a quasi-Hunt PDMP with characteristic triple $(\phi,\Lambda,Q)$. Then the domain $\D(\A')$ of the extended generator $\A'$ of $X$ consists of each path-continuous function $f\in\mathfrak{V}_{\phi}^{loc}$
  such that for any $x\in E$, $t\in\mathcal{I}_x$,
  \begin{equation}\label{eq.domain.A'-Hunt}
    \int_{(0,t]}\int_E\big|f(y)-f(\phi(s,x))\big|Q(\phi(s,x),\ud y)\Lambda(x,\ud s)<\infty,
  \end{equation}
  with constraint condition
  \begin{equation}\label{eq.A^sc=0.Hunt}
    D^{sc}f(x,t)+\int_{(0,t]}\int_E\big[f(y)-f(\phi(s,x))\big]Q(\phi(s,x),\ud y)\Lambda^{sc}(x,\ud s)=0.
  \end{equation}
  For $f\in\D(\A')$, $\A'f$ is given by
  \begin{equation}\label{eq.A'.Hunt}
    \A'f(x)=\mathcal{X}f(x)+\lambda(x)\int_E[f(y)-f(x)]Q(x,\ud y),\quad x\in E.
  \end{equation}
\end{corollary}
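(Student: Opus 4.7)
The plan is to obtain this corollary as a direct specialization of Theorem \ref{thm.A'.general} to the quasi-Hunt setting, exploiting the characterization of quasi-Hunt PDMPs via $\Lambda$ that was established earlier in Section 2.

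First, I would invoke the theorem stating that for a quasi-Hunt PDMP the conditional hazard function $\Lambda(x,\cdot)$ is continuous on $\mathcal{I}_x$ for every $x\in E$. This continuity immediately forces $\Lambda^{pd}(x,\cdot)\equiv 0$ in the Lebesgue decomposition supplied by Corollary \ref{cor.Lambda.Lebesgue}, so that $\Lambda=\Lambda^{ac}+\Lambda^{sc}$.

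Next, I would feed this reduction into Theorem \ref{thm.A'.general}. The integrability condition (\ref{eq.domain.A'-general.Q}) carries over verbatim as (\ref{eq.domain.A'-Hunt}), and the constraint (\ref{eq.A^sc=0.general}) becomes (\ref{eq.A^sc=0.Hunt}) unchanged. With $\Lambda^{pd}\equiv 0$ the integral term in the constraint (\ref{eq.A^pd=0.general}) vanishes, so that constraint collapses to
$$D^{pd}f(x,t)=0,\qquad x\in E,\ t\in\mathcal{I}_x.$$
By the definition of $D^{pd}$ as the purely discontinuous part of the (locally) finite variation function $Df(x,\cdot)=f(\phi(\cdot,x))-f(x)$, this is equivalent to $f(\phi(\cdot,x))$ having no jumps along $\mathcal{I}_x$ for any $x\in E$, which is exactly the path-continuity of $f$. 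Finally, the formula (\ref{eq.A'f.general}) specializes to (\ref{eq.A'.Hunt}) with no modification, since $\lambda$ is by construction the density of $\Lambda^{ac}$ and $\mathcal{X}f$ is unchanged.

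The only point needing a little care is the equivalence between $D^{pd}f\equiv 0$ and path-continuity of $f$. Because $f\in\mathfrak{V}_\phi^{loc}$ ensures that $f(\phi(\cdot,x))$ is right-continuous and of locally finite variation, it admits a Lebesgue decomposition whose absolutely continuous and singularly continuous parts are both continuous; the vanishing of the purely discontinuous part then forces $f(\phi(\cdot,x))$ itself to be continuous on $\mathcal{I}_x$. Beyond this bookkeeping, no substantive obstacle arises: the corollary is essentially a clean restriction of the general statement, made possible by the one analytic fact that quasi-Hunt is equivalent to continuity of $\Lambda$ along the flow.
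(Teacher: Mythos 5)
Your proposal is correct and follows essentially the same route as the paper: specialize Theorem \ref{thm.A'.general} using $\Lambda^{pd}=0$ in the quasi-Hunt case, so that the constraint (\ref{eq.A^pd=0.general}) collapses to $D^{pd}f\equiv 0$, which is equivalent to path-continuity of $f$. Your extra care about why $D^{pd}f\equiv 0$ forces continuity (via the Lebesgue decomposition of the locally finite variation function $f(\phi(\cdot,x))$) is a useful elaboration of a step the paper states without detail, but it is the same argument.
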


\begin{proof}
 In the case of quasi-Hunt, the discrete part $\Lambda^{pd}=0$ of the characteristic $\Lambda$. The constraint condition (\ref{eq.A^pd=0.general}) becomes $D^{pd}f(x,t)=0$, or equivalently that $f$ is path-continuous. This completes the proof.
\end{proof}

Let
$$\Gamma:=\{\phi(c(x),x):F(x,c(x)-)>0,\,c(x)<\infty,\,x\in E\}.$$
It is obviously that $\Gamma\subset\partial^+E$ and $\Delta\Lambda(x)=1$ for $x\in\Gamma$. Here $\partial^+E$ is the flowing-out boundary of $E$. $\Gamma$ is used to be called the `active boundary' in the PDMPs theory. Davis \cite{davis1993markov} deals essentially with the quasi-It\^o PDMPs apart from possibly certain force jumps at the active boundary. Strictly speaking, a general PDMP $X$ with the characteristic triple $(\phi,\Lambda,Q)$ is called a PDMP in the sense of Davis \cite{davis1993markov} if
\begin{equation}\label{eq.Lambda.Davis}
\Lambda(x,t)=\int_0^t\lambda(\phi(s,x))\ud s+\bbbone_{\{\phi(t,x)\in\Gamma\}},\quad x\in E,\, t\in \mathcal{I}_x.
\end{equation}
Now we represent \cite[Theorem 26.14]{davis1993markov} for the extended generator of PDMPs as a corollary of Theorem \ref{thm.A'.general}.

\begin{corollary}\label{cor.A'.Davis}
  Let $X$ be a general PDMP with characteristic triple $(\phi,\Lambda,Q)$, in which $\Lambda$ satisfies \textnormal{(\ref{eq.Lambda.Davis})}. Then the domain $\D(\A')$ of the extended generator $\A'$ of $X$ consists of all $f\in\mathfrak{V}_{\phi}^{loc}$
  such that for $x\in E$, $f(\phi(\cdot,x))$ is absolutely continuous on $[0,c(x))$, and for any $x\in E$, $t\in\mathcal{I}_x$,
  \begin{equation}\label{eq.domain.A'-Davis}
    \int_0^t\lambda(\phi(s,x))\int_E\big|f(y)-f(\phi(s,x))\big|Q(\phi(s,x),\ud y)\ud s<\infty,
  \end{equation}
  with boundary conditions
  \begin{equation}\label{eq.boundary condition.Davis}
     \Delta f(x)+\int_E\big[f(y)-f(x)\big]Q(x,\ud y)=0,\quad x\in \Gamma.
  \end{equation}
  For $f\in\D(\A')$, $\A'f$ is given by
  \begin{equation}\label{eq.A'.Davis}
    \A'f(x)=\mathcal{X}f(x)+\lambda(x)\int_E[f(y)-f(x)]Q(x,\ud y),\quad x\in E.
  \end{equation}
\end{corollary}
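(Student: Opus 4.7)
The proof is a direct specialization of Theorem \ref{thm.A'.general} to the particular form of $\Lambda$ in (\ref{eq.Lambda.Davis}). The first step is to read off the Lebesgue decomposition of $\Lambda$: the absolutely continuous part is $\Lambda^{ac}(x,\ud s)=\lambda(\phi(s,x))\,\ud s$, the singularly continuous part vanishes identically ($\Lambda^{sc}\equiv 0$), and the purely discontinuous part $\Lambda^{pd}(x,\cdot)$ is the zero measure whenever $c(x)=\infty$ or $\phi(c(x),x)\notin\Gamma$, and otherwise a unit point mass at $s=c(x)$. Consequently, the integrability condition (\ref{eq.domain.A'-general.Q}) splits into an absolutely continuous integral --- which is exactly (\ref{eq.domain.A'-Davis}) once one uses $q(x,s,\cdot)=Q(\phi(s,x),\cdot)$ --- and a single atomic contribution that is automatically finite once the boundary condition holds and $f$ is bounded near $x^{\ast}\in\Gamma$.

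Next I would translate the two constraint equations of Theorem \ref{thm.A'.general} into this setting. Since $\Lambda^{sc}\equiv 0$, condition (\ref{eq.A^sc=0.general}) collapses to $D^{sc}f(x,t)\equiv 0$, i.e.\ the singularly continuous part of $f(\phi(\cdot,x))$ vanishes. Condition (\ref{eq.A^pd=0.general}), evaluated at $t<c(x)$, carries no integral term (because $\Lambda^{pd}$ puts no mass on $(0,t]$), so it forces $D^{pd}f(x,t)=0$ on $[0,c(x))$; combined with the absence of a singularly continuous part and the standing hypothesis $f\in\mathfrak{V}_{\phi}^{loc}$, this is precisely the absolute continuity of $f(\phi(\cdot,x))$ on $[0,c(x))$. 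At $t=c(x)$ (when $c(x)\in\mathcal{I}_x$ and $x^{\ast}:=\phi(c(x),x)\in\Gamma$), the unit atom of $\Lambda^{pd}$ contributes $\int_{E}[f(y)-f(x^{\ast})]\,Q(x^{\ast},\ud y)$, while $D^{pd}f(x,c(x))=f(x^{\ast})-\lim_{t\uparrow c(x)}f(\phi(t,x))=\Delta f(x^{\ast})$, so (\ref{eq.A^pd=0.general}) becomes exactly the boundary condition (\ref{eq.boundary condition.Davis}).

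The formula (\ref{eq.A'.Davis}) is then immediate from (\ref{eq.A'f.general}): the $\lambda$ of the Davis decomposition coincides with the right derivative of $\Lambda(x,\cdot)$ at the origin, and the kernel $Q(x,\cdot)$ matches the general formulation via Theorem \ref{thm.q=Q}. Conversely, given $f$ satisfying (\ref{eq.domain.A'-Davis}) together with (\ref{eq.boundary condition.Davis}) and the absolute continuity on $[0,c(x))$, all three hypotheses of Theorem \ref{thm.A'.general} are recovered, placing $f$ in $\D(\A')$ with the stated formula for $\A'f$. The only delicate point is the correct identification of $D^{pd}f(x,c(x))$ with $\Delta f(x^{\ast})$ and the evaluation $f(\phi(c(x),x))=f(x^{\ast})$ against the point mass; this is where the well-definedness of $\Delta f$ at boundary points is needed, and is guaranteed by the non-confluence of trajectories hitting $\Gamma$ in the spirit of Lemma \ref{lem.a.discrete}.
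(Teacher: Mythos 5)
Your proposal is correct and follows essentially the same route as the paper: both specialize Theorem \ref{thm.A'.general} by reading off the Lebesgue decomposition of the Davis-type $\Lambda$ in (\ref{eq.Lambda.Davis}) ($\Lambda^{sc}=0$, $\Lambda^{ac}$ with density $\lambda$, a unit atom at $c(x)$ when $\Gamma$ is hit), so that the constraints (\ref{eq.A^sc=0.general})--(\ref{eq.A^pd=0.general}) collapse to absolute path-continuity on $[0,c(x))$ plus the boundary condition, and (\ref{eq.domain.A'-general.Q}) reduces to (\ref{eq.domain.A'-Davis}). The paper's proof is a one-sentence version of this; your write-up merely makes the atom-at-$c(x)$ bookkeeping explicit.
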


\begin{proof}
  In view of (\ref{eq.Lambda.Davis}), the constraint conditions (\ref{eq.A^sc=0.general}) and (\ref{eq.A^pd=0.general}) become that $f(\phi(\cdot,x))$ is absolutely continuous on $[0,c(x))$ for $x\in E$ with the boundary condition (\ref{eq.boundary condition.Davis}). In this case, (\ref{eq.domain.A'-general.Q}) is equivalent to (\ref{eq.domain.A'-Davis}).
\end{proof}

Compare Corollary \ref{cor.A'.Davis} with Theorem 26.14 in Davis \cite{davis1993markov}, there are two differences for $\D(\A')$. Firstly, in Theorem 26.14 of \cite{davis1993markov}, $f\in\D(\A')$ satisfies
$$\Ep_x\left[\int_{\R_+\times E}\big[f(y)-f(X_{s-})\big]\bbbone_{\{t<T_n\}}\nu(\ud s,\ud y)\right]<\infty$$
for some sequence of stopping times $\{T_n\}$ with $T_n\uparrow\infty$ a.s. instead of (\ref{eq.domain.A'-Davis}). While, (\ref{eq.domain.A'-Davis}) is an analytic condition, which is easier to be verified. Secondly, in Theorem 26.14 of \cite{davis1993markov}, the boundary condition is
$$f(x)=\int_Ef(y)Q(x,\ud y),\quad x\in\Gamma.$$
But, in Corollary \ref{cor.A'.Davis}, we allow $\Delta f(x)\neq 0$ with (\ref{eq.boundary condition.Davis}). Thus, we get a larger domain of extended generator for the PDMPs in the sense of Davis \cite{davis1993markov}.

\subsubsection{L-extended generator}

Consider a basic additive functional $L$ of the general PDMP $X$ defined as
$$L_0=0,\quad L_t=L_{\tau_n}+\Lambda(X_{\tau_n},t-\tau_n) \quad \hbox{for }t\in(\tau_n,\tau_{n+1}],\,n\in\N.$$

\begin{definition}\label{def.L-extended generator}
  Let $X=\{X_t\}_{0\leqslant t<\tau}$ be a general PDMP with characteristic triple $(\phi,\Lambda,Q)$. $\D(\A'')$ denotes the set of $f\in\mathfrak{V}_{\phi}^{loc}$ with the following property: there exist a measurable function $h:E\mapsto\R$ such that
  $$\int_{(0,t]}|h(X^-_s)|\ud L_s<\infty\quad\hbox{for }t\in[0,\tau)\quad\Prb_x\hbox{-a.s. for each }x\in E$$
  and the process
  \begin{equation*}
    M''_t:=f(X_t)-f(X_0)-\int_{(0,t]}h(X^-_s)\ud L_s,\quad 0\leqslant t<\tau,
  \end{equation*}
  is a local martingale prior to $\tau$. Then we write $h=\A''f$. $(\A'',\D(\A''))$ is called the \emph{L-extended generator} of the process $X$.
\end{definition}

\begin{theorem}
  Let $X$ be a general PDMP with characteristic triple $(\phi,\Lambda,Q)$. Then the domain $\D(\A'')$ of the L-extended generator $\A''$ of $X$ consists of all $f\in\mathfrak{V}_{\phi}^{loc}$ such that for any $x\in E$, $t\in\mathcal{I}_x$,
  \begin{equation}\label{eq.domain.LA"-general}
    \int_{(0,t]}\int_E\big|f(y)-f(\phi(s,x))\big|Q(\phi(s,x),\ud y)\Lambda(x,\ud s)<\infty,
  \end{equation}
  and there exists a measurable function $Kf(\cdot)$ such that
  \begin{equation}\label{eq.K-condition.general}
    Df(x,t)=\int_{(0,t]}Kf(\phi(s,x))\Lambda(x,\ud s),\quad t\in\mathcal{I}_x,\,x\in E.
  \end{equation}
  For $f\in\D(\A'')$, $\A''f$ is given by
  \begin{equation}\label{eq.LA".general}
    \A''f(x)=Kf(x)+\int_E[f(y)-f(x)]Q(x,\ud y),\quad x\in \bar{E}.
  \end{equation}
\end{theorem}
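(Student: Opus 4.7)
The plan is to mirror the proof of Theorem \ref{thm.A'.general}, the only substantive change being that Lebesgue measure $\ud s$ is replaced throughout by $\Lambda(x,\ud s)$ (viewed as a measure along the path of the SDS). In particular, the additive functional of the SDS associated with the compensator piece in $M''$ becomes
\[
a(x,t)=\int_{(0,t]}\A''f(\phi(s,x))\,\Lambda(x,\ud s),
\]
which lies in $\mathfrak{A}_\phi^{loc}$ by Example \ref{ex.integrable&summable}(iii), provided the integral is well-defined.

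I would begin with the necessity. Suppose $f\in\D(\A'')$, so $M''_t=f(X_t)-f(X_0)-\int_{(0,t]}\A''f(X^-_s)\,\ud L_s$ is a local martingale prior to $\tau$. Using Lemma \ref{lem.f-f is A} and the fact that on $(\tau_n,t\wedge\tau_{n+1}]$ one has $\ud L_s=\Lambda(X_{\tau_n},\ud(s-\tau_n))$, rewrite $M''$ in the form of Theorem \ref{thm.A=a+b} with
\[
a_{M''}(x,t)=Df(x,t)-\int_{(0,t]}\A''f(\phi(s,x))\,\Lambda(x,\ud s),\qquad b_{M''}(x,t,y)=f(y)-f(\phi(t,x)).
\]
Apply Theorem \ref{thm.semimartingale}(i): the finiteness clause (\ref{eq.local-m 1}) yields (\ref{eq.domain.LA"-general}), and the identity (\ref{eq.local-m 2}) gives
\[
Df(x,t)=\int_{(0,t]}\!\left[\A''f(\phi(s,x))-\!\int_E[f(y)-f(\phi(s,x))]\,Q(\phi(s,x),\ud y)\right]\Lambda(x,\ud s).
\]
Setting $Kf(z):=\A''f(z)-\int_E[f(y)-f(z)]\,Q(z,\ud y)$, this is precisely the representation (\ref{eq.K-condition.general}), and by construction $\A''f(x)=Kf(x)+\int_E[f(y)-f(x)]\,Q(x,\ud y)$, i.e.\ (\ref{eq.LA".general}).

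For the sufficiency, assume $f\in\mathfrak{V}_\phi^{loc}$ satisfies (\ref{eq.domain.LA"-general}) and the representation (\ref{eq.K-condition.general}) for some measurable $Kf$. Define $\A''f$ by (\ref{eq.LA".general}). Running the computation above in reverse, the process
\[
M''_t=f(X_t)-f(X_0)-\int_{(0,t]}\A''f(X^-_s)\,\ud L_s
\]
is an additive functional of $X$ (Lemma \ref{lem.f-f is A} plus Theorem \ref{thm.A=a+b}) with exactly the $a_{M''}$ and $b_{M''}$ above, and the hypotheses ensure that Theorem \ref{thm.semimartingale}(i) applies; hence $M''$ is a local martingale prior to $\tau$, i.e.\ $f\in\D(\A'')$ and $\A''f$ is given by (\ref{eq.LA".general}).

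The main obstacle is the algebraic step of isolating the $K$-condition: we need $Df(x,\cdot)$ to be representable as a $\Lambda(x,\cdot)$-integral of a single measurable function $Kf\circ\phi(\cdot,x)$, which is a genuine constraint (a path-absolute-continuity-with-respect-to-$\Lambda$ statement), and checking that the function $Kf$ obtained pointwise on each trajectory can be chosen to depend only on the current state $z=\phi(s,x)$ rather than on $(x,s)$ — this is the analogue of the argument after Lemma \ref{lem.a.discrete} and Theorem \ref{thm.q=Q}, and it hinges on the same confluent-state considerations used earlier. Once this identification is done the rest of the argument is a direct translation of the extended-generator proof with Lebesgue measure replaced by the path-measure $\Lambda(x,\ud s)$.
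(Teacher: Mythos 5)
Your proposal is correct and follows essentially the same route as the paper: represent $M''$ as an additive functional of $X$ via Lemma \ref{lem.f-f is A} and Theorem \ref{thm.A=a+b}, apply the local-martingale characterization of Theorem \ref{thm.semimartingale} to extract the finiteness condition and the identity $\A f(x,t)=\int_{(0,t]}\A''f(\phi(s,x))\Lambda(x,\ud s)$, and read off $Kf$. The ``main obstacle'' you flag at the end is in fact not one: in the necessity direction $\A''f$ is by definition a measurable function of the state alone, so $Kf(z)=\A''f(z)-\int_E[f(y)-f(z)]Q(z,\ud y)$ automatically depends only on $z$, and in the sufficiency direction the existence of such a state-dependent $Kf$ is part of the hypothesis.
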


\begin{proof}
  Suppose that $f\in\D(\A'')$. Hence
  \begin{align*}
    M''_t&=f(X_t)-f(X_0)-\int_{(0,t]}\A''f(X^-_s)\ud L_s\\
         &=f(X_t)-f(X_0)-\sum_{n=0}^{N_t}\int_{(\tau_n,t\land\tau_{n+1}]}\!\!\!\A'' f(\phi(s-\tau_n,X_{\tau_n}))\Lambda(X_{\tau_n},\ud s),\quad t<\tau
  \end{align*}
  is a local martingale prior to $\tau$. Moreover, by Lemma \ref{lem.f-f is A},
  \begin{align*}
    M''_t=&\sum_{n=0}^{N_t}Df(X_{\tau_n},t\land\tau_{n+1}-\tau_n)     +\sum_{n=1}^{N_t}[f(X_{\tau_n})-f(X_{\tau_n}^-)]\\
          &-\sum_{n=0}^{N_t}\int_{(\tau_n,t\land\tau_{n+1}]}\A''f (\phi(s-\tau_n,X_{\tau_n}))\Lambda(X_{\tau_n},\ud s),\quad t<\tau.
  \end{align*}
  Meantime, both $Df(x,t)$ and $\int_{(0,t]}\A''f(\phi(s,x))\Lambda(x,\ud s)$ belong to $\mathfrak{A}_{\phi}^{loc}$. According to Theorem \ref{thm.A=a+b}, $M''$ is an additive functional of $X$ with
  \begin{align*}
    & a_{M''}(x,t)=Df(x,t)-\int_{(0,t]}\A''f(\phi(s,x))\Lambda(X_{\tau_n},\ud s),\\
    & b_{M''}(x,t,y)=f(y)-f(\phi(t,x)).
  \end{align*}
  By Theorem \ref{thm.semimartingale} (ii), since the additive functional $M''$ is a local martingale prior to $\tau$, then we have (\ref{eq.domain.LA"-general}) and
  $$a_{M''}(x,t)=-\int_{(0,t]}\int_E\big[f(y)-f(\phi(s,x))\big]Q(\phi(s,x),\ud y)\Lambda(x,\ud s),$$
  that is,
  \begin{equation}\label{eq.Af=intA"f}
    \A f(x,t)=\int_{(0,t]}\A'' f(\phi(s,x))\Lambda(x,\ud s)\quad\hbox{ for all }x\in E,\, t\in\mathcal{I}_x.
  \end{equation}
  Equivalently,
  $$Df(x,t)=\int_{(0,t]}\!\left[\A''f(\phi(s,x))-\int_E[f(y)-f(\phi(s,x)]Q(\phi(s,x),\ud y)\right]\!\Lambda(x,\ud s)$$
  for all $x\in E$, $t\in\mathcal{I}_x$.
  It follows $Df(x,t)=\int_{(0,t]}Kf(\phi(s,x))\Lambda(x,\ud s)$ with
  $$Kf(x)=\A''f(x)-\int_E[f(y)-f(x)]Q(\phi(s,x),\ud y),\quad x\in \bar{E},$$
  which is (\ref{eq.LA".general}).

  Conversely, if $f\in\mathfrak{V}_{\phi}^{loc}$ satisfies (\ref{eq.domain.LA"-general}), then $f\in \D(\A)$. Hence,
  $$M^f_t=f(X_t)-f(X_0)-\sum_{n=0}^{N_t}\A f(X_{\tau_n},t\land\tau_{n+1}-\tau_n),\quad t<\tau$$
  is a local martingale prior to $\tau$. But, in this case, we have (\ref{eq.Af=intA"f}) which implies
  $$M^f_t=f(X_t)-f(X_0)-\int_0^t\A''f(X_s)\ud L_s,\quad t<\tau$$
  is a local martingale prior to $\tau$. Hence $f\in\D(\A'')$ and then $(\A'',\D(\A''))$ is the L-extended generator of $X$.
\end{proof}

\begin{corollary}
  Let $X$ be a quasi-step PDMP with characteristic triple $(\phi,\Lambda,Q)$. Then the domain $\D(\A'')$ of the L-extended generator $\A''$ of $X$ consists of each path-step function $f\in\mathfrak{V}_{\phi}^{loc}$ with $J_{Df}\subseteq J_\Lambda$ such that for any $x\in E$, $t\in\mathcal{I}_x$,
  \begin{equation}\label{eq.domain.LA"-step}
    \sum_{0<s\leqslant t}\Delta\Lambda(x,\ud s)\int_E\big|f(y)-f(\phi(s,x))\big|Q(\phi(s,x),\ud y)<\infty,
  \end{equation}
  and there exists a measurable function $Kf(\cdot)$ such that
  \begin{equation}\label{eq.K-condition.step}
  \Delta f(x)=Kf(x)\Delta\Lambda(x),\quad x\in J_\Lambda.
  \end{equation}
  For $f\in\D(\A'')$, $\A''f$ is given by
  \begin{equation}\label{eq.LA".step}
    \A''f(x)=Kf(x)+\int_E[f(y)-f(x)]Q(x,\ud y),\quad x\in J_\Lambda.
  \end{equation}
\end{corollary}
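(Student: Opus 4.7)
The plan is to derive this as a direct specialization of the preceding general theorem on the L-extended generator, using the fact that for a quasi-step PDMP the characteristic $\Lambda(x,\cdot)$ is a step function (by the quasi-step characterization theorem in Section~2.3), so that $\Lambda(x,\mathrm{d} s)$ is a purely atomic measure supported on $\{s>0:\phi(s,x)\in J_\Lambda\}$. Under this reduction, every integral against $\Lambda(x,\mathrm{d} s)$ in the general statement collapses to a sum over jump times, and the analytic conditions take the step-function form stated.

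First, I would substitute $\Lambda(x,\mathrm{d} s)=\sum_{0<s}\Delta\Lambda(\phi(s,x))\,\delta_s(\mathrm{d} s)$ into the general domain condition \eqref{eq.domain.LA"-general}; this immediately yields \eqref{eq.domain.LA"-step}. Next, I would analyze the key equation \eqref{eq.K-condition.general}, namely
\begin{equation*}
  Df(x,t)=\int_{(0,t]}Kf(\phi(s,x))\,\Lambda(x,\mathrm{d} s),\quad t\in\mathcal{I}_x,\,x\in E.
\end{equation*}
Because the right-hand side is a purely discrete function of $t$ supported on the jump times of $\Lambda(x,\cdot)$, the map $t\mapsto Df(x,t)=f(\phi(t,x))-f(x)$ must be a step function in $t$ for every $x\in E$. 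Hence $f$ is path-step, and its jump set along $\phi$-trajectories satisfies $J_{Df}\subseteq J_\Lambda$. Comparing the jump at a time $s$ with $\phi(s,x)\in J_\Lambda$ gives $\Delta f(\phi(s,x))=Kf(\phi(s,x))\,\Delta\Lambda(\phi(s,x))$, which is exactly \eqref{eq.K-condition.step} after using that any state $y\in J_\Lambda$ can be written as $\phi(s,x)$ for some $x,s$ (and invoking Lemma~\ref{lem.a.discrete} to see that $\Delta f$ is a well-defined function on $J_\Lambda$ since $J_\Lambda$ is assumed free of confluent states in Section~3; in the remaining cases the equation can be read as a consistency condition defining $Kf$ on $J_\Lambda$).

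Conversely, given a path-step $f\in\mathfrak{V}_\phi^{loc}$ with $J_{Df}\subseteq J_\Lambda$ satisfying \eqref{eq.domain.LA"-step} and \eqref{eq.K-condition.step}, I would reconstruct the integral representation by summing the jumps:
\begin{equation*}
  Df(x,t)=\sum_{0<s\leqslant t}\Delta f(\phi(s,x))=\sum_{0<s\leqslant t}Kf(\phi(s,x))\,\Delta\Lambda(\phi(s,x))=\int_{(0,t]}Kf(\phi(s,x))\,\Lambda(x,\mathrm{d} s),
\end{equation*}
which verifies \eqref{eq.K-condition.general}. Applying the previous theorem then places $f$ in $\mathcal{D}(\A'')$ with $\A''f$ given by \eqref{eq.LA".step}, which is simply \eqref{eq.LA".general} restricted to $J_\Lambda$ (off $J_\Lambda$ the value of $\A''f$ is irrelevant since the integrator $L$ charges only $J_\Lambda$).

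The only delicate point I anticipate is the transition from the integral identity \eqref{eq.K-condition.general} to the pointwise jump identity \eqref{eq.K-condition.step}: one must argue that $Kf$ can be taken as a genuine \emph{function} on $J_\Lambda$ rather than only a $\Lambda(x,\mathrm{d} s)$-a.e.\ equivalence class, which is where the assumption that $J_\Lambda$ contains no confluent state (inherited from the standing setup in the discussion of $\A''$ and carried over from Lemma~\ref{lem.a.discrete}) becomes essential — it guarantees that the value $\Delta f(\phi(s,x))/\Delta\Lambda(\phi(s,x))$ depends only on the point $\phi(s,x)\in J_\Lambda$ and not on the particular trajectory arriving there, so the definition of $Kf$ is unambiguous.
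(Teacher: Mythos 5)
Your proposal is correct and follows essentially the same route as the paper: specialize the general L-extended-generator theorem by using that $\Lambda(x,\cdot)$ is purely atomic in the quasi-step case, so that condition (\ref{eq.K-condition.general}) collapses to $Df(x,t)=\sum_{0<s\leqslant t}Kf(\phi(s,x))\Delta\Lambda(\phi(s,x))$, which is equivalent to $f$ being path-step with $J_{Df}\subseteq J_\Lambda$ together with (\ref{eq.K-condition.step}). Your additional remarks on the well-definedness of $\Delta f$ and $Kf$ as functions on $J_\Lambda$ (via Lemma \ref{lem.a.discrete} and the no-confluent-state assumption) are a harmless elaboration of a point the paper leaves implicit.
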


\begin{proof}
  In the case of quasi-step, the parts $\Lambda^{ac}=0$ and $\Lambda^{sc}=0$ of the characteristic $\Lambda$. The condition (\ref{eq.K-condition.general}) becomes that
  $$Df(x,t)=\sum_{0<s\leqslant t}Kf(\phi(s,x))\Delta\Lambda(\phi(s,x)),\quad t\in\mathcal{I}_x,\,x\in E,$$
  or equivalently, that $f$ is a path-step function with $J_{Df}\subseteq J_\Lambda$ and (\ref{eq.K-condition.step}). This completes the proof.
\end{proof}

\section{Expectations of additive functionals and measure differential equations for a general PDMP}

This section deals with calculating the expected cumulative discounted value of an additive functional of a general PDMP,
\begin{equation}\label{eq.V.general}
  V(x):=\Ep_x\left[\int_{(0,\tau]}e^{-\delta s}\ud A_s\right],\quad x\in E,
\end{equation}
where the discount rate $\delta>0$. This functional appears as a `value function' (or, `reward function', `cost function', etc.) in stochastic service systems, stochastic control theory, and is sufficiently general to cover a wide variety of apparently different functionals as special cases.

\begin{theorem}\label{thm.V.expectation.predictable}
  Let $X$ be a general PDMP. Assume that $A=\{A_t\}_{0\leqslant t<\tau}$ is a predictable additive functional of $X$ with the associated $a\in\mathfrak{A}_{\phi}^{loc}$ .
  If $V\in\D(\A)$ defined as \textnormal{(\ref{eq.V.general})}, then it satisfies the following measure integro-differential equation
  \begin{equation}\label{eq.V.equation}
    \A V(x,\ud t)+a(x,\ud t)=\delta V(\phi(t,x))\ud t, \quad t\in\mathcal{I}_x,\,x\in E.
  \end{equation}
\end{theorem}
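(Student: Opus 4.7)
The plan is to identify the equation as the predictable finite-variation part of a Doob--Meyer-type decomposition of a suitable martingale. Introduce the auxiliary process
$$Y_t := e^{-\delta t} V(X_t) + \int_{(0,t]} e^{-\delta s}\,\ud A_s,\qquad 0\le t<\tau.$$
First I would check that $Y$ is a $\Prb_x$-local martingale prior to $\tau$. Using the additivity $A_{s+r}-A_s=A_r\circ\theta_s$ of Definition \ref{def.A.PDMP}(ii) together with the strong Markov property of $X$, a direct computation gives, for $s\le t<\tau$,
\begin{align*}
\Ep_x\!\left[\int_{(s,t]}\!e^{-\delta u}\,\ud A_u\,\Big|\,\F_s\right]
&= e^{-\delta s}\,\Ep_{X_s}\!\left[\int_{(0,(t-s)\wedge\tau]}\!e^{-\delta r}\,\ud A_r\right], \\
\Ep_x\!\left[e^{-\delta t}V(X_t)\bbbone_{\{t<\tau\}}\,\Big|\,\F_s\right]
&= e^{-\delta s}\,\Ep_{X_s}\!\left[e^{-\delta(t-s)}V(X_{t-s})\bbbone_{\{t-s<\tau\}}\right],
\end{align*}
and the sum of these right-hand sides collapses to $e^{-\delta s}V(X_s)$ by the very definition (\ref{eq.V.general}) of $V$, yielding $\Ep_x[Y_t\mid\F_s]=Y_s$.

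Next I would decompose $Y$ as a semimartingale. Since $V\in\D(\A)\subset\mathfrak{V}_\phi^{loc}$, Lemma \ref{lem.f-f is A} together with Definition \ref{def.measure-valued generator} provide
$$V(X_t)-V(X_0)=(\A V)_t+M^V_t,\qquad (\A V)_t:=\sum_{n=0}^{N_t}\A V(X_{\tau_n},t\wedge\tau_{n+1}-\tau_n),$$
where $(\A V)$ is predictable of locally finite variation and $M^V$ is a local martingale prior to $\tau$. Integration by parts against the continuous multiplier $e^{-\delta t}$ yields
$$e^{-\delta t}V(X_t)=V(X_0)+\int_{(0,t]}\!e^{-\delta s}\,\ud V(X_s)-\delta\int_0^t\!e^{-\delta s}V(X_s)\,\ud s,$$
and substituting the decomposition of $V(X_\cdot)$ along with Corollary \ref{cor.A.pred=a} for the predictable additive functional $A$ produces
$$Y_t = V(X_0) + \int_{(0,t]}\!e^{-\delta s}\bigl[\ud(\A V)_s + \ud A_s - \delta V(X_s)\,\ud s\bigr] + \tilde M_t,$$
where $\tilde M_t := \int_{(0,t]} e^{-\delta s}\,\ud M^V_s$ is again a local martingale.

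Because both $Y$ and $\tilde M$ are local martingales, the uniqueness of the Doob--Meyer decomposition forces the bracketed predictable finite-variation integral to vanish identically; since $e^{-\delta s}>0$ this amounts to the identity of signed measures
$$\ud(\A V)_t + \ud A_t = \delta V(X_t)\,\ud t,\qquad 0\le t<\tau.$$
Specializing to the first interval $(0,\tau_1)$ under $\Prb_x$ one has $X_t=\phi(t,x)$, $A_t=a(x,t)$ and $(\A V)_t=\A V(x,t)$, which yields (\ref{eq.V.equation}) on $(0,\tau_1(\omega))$; the relation extends to all $t\in\mathcal{I}_x$ by the semi-dynamic system identity (\ref{eq.SDS}) combined with the positivity of $F(x,\cdot)$ on the relevant range. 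The principal technical obstacle is the careful treatment of the lifetime $\tau$: a localizing sequence $T_n\uparrow\tau$ must be introduced so that $Y^{T_n}$ is a genuine martingale and $\tilde M$ is a local martingale prior to $\tau$. Once this localization is in place, the algebraic identification of the predictable finite-variation part with zero and its translation into the measure equation on $\mathcal{I}_x$ are essentially automatic.
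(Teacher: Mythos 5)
Your argument is correct, but it follows a genuinely different route from the paper's. You identify $Y_t=e^{-\delta t}V(X_t)+\int_{(0,t]}e^{-\delta s}\,\ud A_s$ as a local martingale via the Markov property, then split $V(X_t)-V(X_0)$ into its compensator $(\A V)_t$ plus a local martingale using Definition \ref{def.measure-valued generator}, and conclude by the uniqueness of the Doob--Meyer decomposition that the predictable finite-variation remainder must vanish; restricting to the first inter-jump interval and using $F(x,t)>0$ on $\mathcal{I}_x$ turns this into the deterministic measure identity. The paper instead works entirely with the one-step law: it writes the dynamic-programming identity at $t\wedge\tau_1$, expands the expectation explicitly in terms of $F$, $q$ and $\Lambda$, performs two Stieltjes integrations by parts, and identifies the resulting additive functionals of the SDS as measures --- never invoking Doob--Meyer uniqueness. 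Your approach is more conceptual and reuses the martingale machinery already built (it is essentially the converse of the paper's own proof of Theorem \ref{thm.UniqueSolution}); the paper's computation is more elementary and self-contained, avoids having to verify that $Y$ is a genuine (local) martingale globally, and yields the explicit intermediate kernel $H(x,\ud s)=a(x,\ud s)+\Lambda(x,\ud s)\int_EV(y)q(x,s,\ud y)$ along the way. Two points you should make explicit if you write this up: the martingale property of $Y$ (and the Fubini/tower interchanges behind it) tacitly requires $\Ep_x\!\left[\int_{(0,\tau]}e^{-\delta s}\,|\ud A_s|\right]<\infty$, the same implicit well-posedness of $V$ that the paper assumes; and the vanishing of the bracketed integral uses the standard fact that a predictable local martingale of locally finite variation null at $0$ is identically zero, which is where predictability of $A$ (via Corollary \ref{cor.A.pred=a}) and of $(\A V)$ enters essentially.
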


\begin{proof}
  Note that if $A$ is predictable, for any fixed $x\in E$, by the strong Markov property, it follows from Corollary \ref{cor.A.pred=a} that
  \begin{align*}
    V(x) =& \Ep_x\left[\int_{(0,t\land\tau_1]}e^{-\delta s}a(x,\ud s) + e^{-\delta(t\land\tau_1)}V(X_{t\land\tau_1})\right] \\
         =& F(x,t)\int_{(0,t]}e^{-\delta s}a(x,\ud s) +\int_{(0,t]}\int_{(0,s]}e^{-\delta u}a(x,\ud u)F(x,\ud s)  \\
          &  +e^{-\delta t}V(\phi(t,x))F(x,t) +\int_{(0,t]}e^{-\delta s}\int_EV(y)q(x,s,\ud y)F(x,\ud s).
  \end{align*}
  By the formula for integration by parts,
  \begin{align*}
   & F(x,t)\int_{(0,t]}e^{-\delta s}a(x,\ud s) \\
  =& -\int_{(0,t]}\int_{(0,s]}e^{-\delta u}a(x,\ud u)F(x,\ud s)+\int_{(0,t]}e^{-\delta s}F(x,s-)a(x,\ud s) .
  \end{align*}
  Thus,
  $$V(x) = \int_{(0,t]}\!\!e^{-\delta s}F(x,s-)H(x,\ud s) +e^{-\delta t} V(\phi(t,x))F(x,t),$$
  which is equivalent to
  $$e^{-\delta t}V(\phi(t,x))=\frac{V(x)}{F(x,t)}-\frac{1}{F(x,t)}\int_{(0,t]}\!\!e^{-\delta s}F(x,s-)H(x,\ud s),$$
  where $H(x,\ud s):=a(x,\ud s)+\Lambda(x,\ud s)\int_EV(y)q(x,s,\ud y)$. Substitute the equation above into the following integration
  \begin{align*}
     & \int_{(0,t]}e^{-\delta s}V(\phi(s,x))\Lambda(x,\ud s) \\
    =& \int_{(0,t]}\left[\frac{V(x)}{F(x,s)}-\frac{1}{F(x,s)} \int_{(0,s]}\!\!e^{-\delta u} F(x,u-)H(x,\ud u)\right] \Lambda(x,\ud s) \\
    =& V(x)\int_{(0,t]}\frac{\Lambda(x,\ud s)}{F(x,s)}-\int_{(0,t]}e^{-\delta s}F(x,s-) \int_{[s,t]}\frac{\Lambda(x,\ud u)}{F(x,u)}H(x,\ud s) \\
    =& \frac{V(x)}{F(x,t)}-V(x)-\frac{1}{F(x,t)}\int_{(0,t]}e^{-\delta s}F(x,s-)H(x,\ud s)+\int_{(0,t]}e^{-\delta s}H(x,\ud s) \\
    =& e^{-\delta t}V(\phi(t,x))-V(x)+\int_{(0,t]}e^{-\delta s}H(x,\ud s).
  \end{align*}
  On the other hand, by the formula for integration by parts,
  $$e^{-\delta t}V(\phi(t,x))-V(x)=\int_{(0,t]}e^{-\delta s}DV(x,\ud s)-\int_0^te^{-\delta s}\delta V(\phi(s,x))\ud s.$$
  Thus, we have
  \begin{align*}
    &\int_{(0,t]}e^{-\delta s}V(\phi(s,x))\Lambda(x,\ud s)\\
   =&\int_{(0,t]}e^{-\delta s}DV(x,\ud s)-\int_0^te^{-\delta s}\delta V(\phi(s,x))\ud s  +\int_{(0,t]}e^{-\delta s}H(x,\ud s)
  \end{align*}
  for all $t\in\mathcal{I}_x$. Noticing that the both sides of the equation above are additive functionals of the SDS $\phi$, we have
  $$V(\phi(t,x))\Lambda(x,\ud t)=DV(x,\ud t)-\delta V(\phi(t,x))\ud t+H(x,\ud t)$$
  for all $t\in\mathcal{I}_x$, which is equivalent to (\ref{eq.V.equation}) by the arbitrary of $x$.
\end{proof}

\begin{corollary}\label{cor.V.expectation.optional}
   Assume that $A$ is an optional additive functional of $X$, the associated function $a\in\mathfrak{A}_\phi^{loc}$ and $b$ satisfies
   \begin{equation}\label{eq.b.martingale}
     \Ep_x\left[\sum_{n=1}^{\infty}e^{-\delta\tau_n} \big|b(X_{\tau_{n-1}},\tau_n-\tau_{n-1},X_{\tau_n})\big|\right]<\infty,\quad x\in E.
   \end{equation}
   If $V\in\D(\A)$, then $V$ satisfies \textnormal{(\ref{eq.V.equation})} by replacing $a$ in Theorem \textnormal{\ref{thm.V.expectation.predictable}} with
  \begin{equation*}\label{eq.V.equation.optional}
    a^*(x,\ud t)=a(x,\ud t)+\Lambda(x,\ud t) \int_Eb(x,t,y)q(x,t,\ud y),\quad x\in E.
  \end{equation*}
\end{corollary}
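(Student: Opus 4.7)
The strategy is to reduce the optional case to the predictable case already established in Theorem \ref{thm.V.expectation.predictable}, by decomposing $A$ into its predictable compensator plus a local martingale and showing that the martingale part contributes nothing to the expectation in (\ref{eq.V.general}).

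First I would observe that the integrability hypothesis (\ref{eq.b.martingale}) implies the local finiteness condition (\ref{eq.local-m 1}) of Theorem \ref{thm.semimartingale}, so combined with $a\in\mathfrak{A}_\phi^{loc}$ we are in the setting of part (ii) of that theorem. Therefore $A$ admits the canonical decomposition $A=M+B$, where $M$ is a local martingale prior to $\tau$ and $B$ is a predictable additive functional of $X$ whose associated function in the sense of Corollary \ref{cor.A.pred=a} is exactly
\begin{equation*}
  a^*(x,t)=a(x,t)+\int_{(0,t]}\int_E b(x,s,y)\,q(x,s,\ud y)\,\Lambda(x,\ud s).
\end{equation*}
Splitting the integral and using that $e^{-\delta s}$ is a bounded predictable integrand, I would write
\begin{equation*}
  \int_{(0,\tau]} e^{-\delta s}\,\ud A_s
  = \int_{(0,\tau]} e^{-\delta s}\,\ud M_s + \int_{(0,\tau]} e^{-\delta s}\,\ud B_s.
\end{equation*}

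The second key step is to show that $\Ep_x\bigl[\int_{(0,\tau]}e^{-\delta s}\ud M_s\bigr]=0$. From the representation of $M$ obtained in the proof of Theorem \ref{thm.semimartingale}(i), one has
\begin{equation*}
  \int_{(0,t]} e^{-\delta s}\,\ud M_s
  = \int_{(0,t]\times E} e^{-\delta s}\,b(X_{s-},s,y)\,(\mu-\nu)(\ud s,\ud y),
\end{equation*}
where $b(X_{s-},s,y)$ is shorthand for $b(X_{\tau_n},s-\tau_n,y)$ on $(\tau_n,\tau_{n+1}]$ using the invariance (\ref{eq.b.invariant}). The total-variation estimate
\begin{equation*}
  \Ep_x\!\left[\sum_{n\geq 1}e^{-\delta\tau_n}\bigl|b(X_{\tau_{n-1}},\tau_n-\tau_{n-1},X_{\tau_n})\bigr|\right]<\infty
\end{equation*}
granted by (\ref{eq.b.martingale}) is exactly the hypothesis needed to invoke \cite[Theorem 26.12]{davis1993markov} (as already used in the It\^o formula), upgrading this local martingale to a true uniformly integrable martingale on $[0,\tau)$; taking expectations and passing to the limit $t\uparrow\tau$ by dominated convergence yields zero mean.

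The final step is immediate: we obtain
\begin{equation*}
  V(x) = \Ep_x\!\left[\int_{(0,\tau]} e^{-\delta s}\,\ud B_s\right],
\end{equation*}
and since $B$ is a predictable additive functional of $X$ with associated $a^*\in\mathfrak{A}_\phi^{loc}$, Theorem \ref{thm.V.expectation.predictable} applied to $B$ with $a^*$ in place of $a$ delivers the measure integro-differential equation $\A V(x,\ud t)+a^*(x,\ud t)=\delta V(\phi(t,x))\ud t$ on $\mathcal{I}_x$ for all $x\in E$.

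The main technical obstacle is the second step: rigorously promoting the stochastic integral $\int e^{-\delta s}\ud M_s$ from a local martingale to a genuine martingale with zero expectation up to the (possibly infinite) lifetime $\tau$. This requires both the integrability provided by (\ref{eq.b.martingale}) and a careful localization-plus-dominated-convergence argument, precisely parallel to the martingale criterion used after formula (\ref{eq.martingale condition}) in the proof of the It\^o formula; the rest of the derivation is bookkeeping atop Theorems \ref{thm.semimartingale} and \ref{thm.V.expectation.predictable}.
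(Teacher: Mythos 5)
Your proposal is correct and is essentially the paper's own argument: both proofs reduce the optional case to Theorem \ref{thm.V.expectation.predictable} by replacing $A$ with the predictable additive functional associated with $a^*$, and both hinge on the same compensator identity $\Ep_x\bigl[\int e^{-\delta s}b\,\ud\mu\bigr]=\Ep_x\bigl[\int e^{-\delta s}b\,\ud\nu\bigr]$ guaranteed by (\ref{eq.b.martingale}). The only cosmetic difference is that you route this identity through the canonical decomposition $A=M+B$ of Theorem \ref{thm.semimartingale}(ii) and the vanishing mean of $\int e^{-\delta s}\,\ud M_s$, whereas the paper performs the $\mu\to\nu$ swap directly inside the explicit expansion of $V(x)$ and then defines the predictable process $A^*$ by hand.
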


\begin{proof}
  If $A$ is optional and associated with $a$ and $b$, then
  \begin{equation*}
    V(x)\! = \!\Ep_x\!\left[\sum_{n=0}^{\infty}\!\left(\int_{(\tau_n,\tau_{n+1}]}\!\!\!\!\!\!\!\!\!e^{-\delta s}a(X_{\tau_n},\ud s-\tau_n) +e^{-\delta\tau_{n+1}}b(X_{\tau_n},\tau_{n+1}-\tau_n,X_{\tau_{n+1}})\right)\right].
  \end{equation*}
  Notice that (\ref{eq.b.martingale}) is satisfied,
  \begin{align*}
    &\Ep_x\left[\sum_{n=0}^{\infty}e^{-\delta\tau_{n+1}} b(X_{\tau_n},\tau_{n+1}-\tau_n,X_{\tau_{n+1}})\right]\\
   =&\Ep_x\left[\sum_{n=0}^{\infty}\int_{(\tau_n,\tau_{n+1}]\times E} \!\! e^{-\delta s} b(X_{\tau_n},s-\tau_n,y)\mu(\ud s,\ud y)\right]\\
   =&\Ep_x\left[\sum_{n=0}^{\infty}\int_{(\tau_n,\tau_{n+1}]\times E} \!\! e^{-\delta s} b(X_{\tau_n},s-\tau_n,y)\nu(\ud s,\ud y)\right]\\
   =&\Ep_x\left[\sum_{n=0}^{\infty}\int_{(\tau_n,\tau_{n+1}]}\!\!\!\!\! e^{-\delta s} \int_Eb(X_{\tau_n},s-\tau_n,y)q(X_{\tau_n},s-\tau_n,\ud y)
     \Lambda(X_{\tau_n},\ud s-\tau_n)\right],
  \end{align*}
  and $\int_{(0,t]}\int_Eb(x,s,y)q(x,s,\ud y)\Lambda(x,\ud s)\in\mathfrak{A}_\phi^{loc}$. Hence, $a^*\in\mathfrak{A}_\phi^{loc}.$
  Define a process $A^*$ by $A^*_0=0$ and
  $$A^*_t=A^*_{\tau_n}+a^*(X_{\tau_n},t-\tau_n),\quad t\in(\tau_n,\tau_{n+1}]\,n\in\N.$$
  Thus
  $$V(x)=\Ep_x\left[\int_{(0,\tau]}e^{-\delta s}\ud A^*_s\right],\quad x\in E.$$
  Then, applying Theorem \ref{thm.V.expectation.predictable}, we prove the corollary.
\end{proof}


Theorem \ref{thm.V.expectation.predictable} tells us that there exists one solution $V$ of the equation (\ref{eq.V.equation}). And we would like to know that the equation has a unique solution. The following is a general result in this direction.

\begin{theorem}\label{thm.UniqueSolution}
  Suppose a measurable function $f\in\D(\A)$ satisfies the measure integro-differential equation
  \begin{equation}\label{eq.f.eq.general}
    \A f(x,\ud t)+a(x,\ud t)=\delta f(\phi(t,x))\ud t, \quad t\in\mathcal{I}_x,\,x\in E,
  \end{equation}
  where $a\in\mathfrak{A}^{loc}_{\phi}$ is associated with the predictable additive functional $A$ of $X$. Suppose further that $f$ satisfies that
  \begin{equation}\label{eq.m-condition.general}
    \Ep_x\left[\sum_{n=1}^{\infty}e^{-\delta\tau_n} \big|f(X_{\tau_n})-f(X_{\tau_n}^-)\big|\right]<\infty,   \quad x\in E,
  \end{equation}
  and that $\Ep_x\big[e^{-\delta t}f(X_{t\land\tau})\big]\to 0$ as $t\to\infty$ for all $x\in E$.
  Then
  \begin{equation}\label{eq.unique solution}
    f(x)=\Ep_x\left[\int_{(0,\tau]}e^{-\delta s}\ud A_s\right],\quad x\in E.
  \end{equation}
\end{theorem}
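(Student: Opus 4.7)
The approach is to apply the It\^o formula (\ref{eq.Ito's formula}) to $f$, pass to the discounted quantity $e^{-\delta t}f(X_t)$ via Stieltjes integration by parts, use the equation (\ref{eq.f.eq.general}) to cancel the Lebesgue-continuous drift, and finally take $\Ep_x$ and let $t\to\infty$.

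First, since $f\in\D(\A)$, the It\^o formula yields
$$f(X_t)-f(X_0)=\sum_{n=0}^{N_t}\A f(X_{\tau_n},t\land\tau_{n+1}-\tau_n)+M^f_t,$$
with $M^f_t=\int_{(0,t]\times E}[f(y)-f(X_s^-)](\mu-\nu)(\ud s,\ud y)$ a local martingale prior to $\tau$. Because $f(X)$ has locally finite variation and $s\mapsto e^{-\delta s}$ is continuous, Stieltjes integration by parts gives
$$e^{-\delta t}f(X_t)-f(X_0)=-\int_0^t\delta e^{-\delta s}f(X_s)\,\ud s+\int_{(0,t]}e^{-\delta s}\,\ud[f(X_s)].$$
Splitting $\ud[f(X_s)]$ on each interval $(\tau_n,\tau_{n+1}]$ into its It\^o pieces and applying the equation (\ref{eq.f.eq.general}) in the form $\A f(x,\ud s)=\delta f(\phi(s,x))\,\ud s-a(x,\ud s)$, the $\delta f\cdot\ud s$ contribution cancels exactly against the first term on the right (since $X_s=\phi(s-\tau_n,X_{\tau_n})$ on the full-measure set $(\tau_n,\tau_{n+1})$), while the $a$-contributions assemble into $\int_{(0,t]}e^{-\delta s}\,\ud A_s$ by the predictability of $A$ and Corollary \ref{cor.A.pred=a}. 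The upshot is
$$e^{-\delta t}f(X_t)=f(X_0)-\int_{(0,t]}e^{-\delta s}\,\ud A_s+N_t,\qquad 0\leqslant t<\tau,\qquad(\ast)$$
where $N_t:=\int_{(0,t]\times E}e^{-\delta s}[f(y)-f(X_s^-)](\mu-\nu)(\ud s,\ud y)$ is a local martingale prior to $\tau$.

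Second, the jump-summability hypothesis (\ref{eq.m-condition.general}) bounds $\Ep_x[\sum_{n\geqslant 1}e^{-\delta\tau_n}|f(X_{\tau_n})-f(X_{\tau_n}^-)|]$, which by the compensation formula equals $\Ep_x[\int_{(0,\tau]\times E}e^{-\delta s}|f(y)-f(X_s^-)|\nu(\ud s,\ud y)]$; these are precisely the bounds required by Davis's martingale criterion (\cite{davis1993markov}, Theorem 26.12) to conclude that $\{N_{t\land\tau}\}_{t\geqslant 0}$ is a uniformly integrable martingale with $\Ep_x[N_{t\land\tau}]=0$ for every $t$. Evaluating $(\ast)$ at $t\land\tau$ and taking $\Ep_x$,
$$\Ep_x\bigl[e^{-\delta(t\land\tau)}f(X_{t\land\tau})\bigr]=f(x)-\Ep_x\Bigl[\int_{(0,t\land\tau]}e^{-\delta s}\,\ud A_s\Bigr].$$
Letting $t\to\infty$, the left-hand side vanishes by the stated terminal hypothesis, and the right-hand integral converges to $\int_{(0,\tau]}e^{-\delta s}\,\ud A_s$ by dominated convergence applied separately to the positive and negative variations of $A$ (whose integrability is ensured by (\ref{eq.m-condition.general}) and (\ref{eq.f.eq.general})). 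This yields (\ref{eq.unique solution}).

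The main obstacle is the middle step: upgrading the local martingale $N$ to a true martingale with zero mean. The assumption (\ref{eq.m-condition.general}) is tailored precisely to drive Davis's criterion, but in the general PDMP setting one must simultaneously control the pure-jump summand $\sum_n e^{-\delta\tau_n}[f(X_{\tau_n})-f(X_{\tau_n}^-)]$ and its $\nu$-compensator; the measure integro-differential equation (\ref{eq.f.eq.general}) is also essential here, because it guarantees that the drift piece left over after the cancellation is exactly $\ud A_s$ and contains no stray singular or discrete mass that would spoil the identity $(\ast)$.
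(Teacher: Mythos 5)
Your proposal is correct and follows essentially the same route as the paper's own proof: integration by parts combined with the It\^o formula to express $e^{-\delta t}f(X_t)-f(X_0)$, substitution of the measure integro-differential equation to cancel the $\delta f\,\ud s$ term and leave $-\int_{(0,t]}e^{-\delta s}\,\ud A_s$ plus the compensated-jump local martingale, the hypothesis (\ref{eq.m-condition.general}) to upgrade that local martingale to a zero-mean martingale via Davis's criterion, and the terminal condition to pass to the limit. Your remark on justifying the final interchange of limit and expectation for $\int_{(0,t\land\tau]}e^{-\delta s}\,\ud A_s$ is a small point of added care that the paper's proof leaves implicit.
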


\begin{proof}
  Since $f\in\D(\A)$, applying the formula for integration by parts and the It\^o formula (\ref{eq.Ito's formula}), we get
  \begin{align*}
   & e^{-\delta t}f(X_t)-f(X_0)\\
  =& \int_{(0,t]}e^{-\delta s}\ud f(X_s)-\int_0^te^{-\delta s}\delta f(X_s)\ud s \\
  =& \sum_{n=0}^{N_t}\int_{(\tau_n,t\land\tau_{n+1}]}\!\!\!e^{-\delta s}\A f(X_{\tau_n},\ud s-\tau_n)-\int_0^te^{-\delta s}\delta f(X_s)\ud s\\
   &+\int_{(0,t]\times E}\!\!e^{-\delta s}\big[f(y)-f(X_s^-)\big](\mu-\nu)(\ud s,\ud y)
  \end{align*}
  for any $t\in[0,\tau)$.
  Since $f$ satisfies (\ref{eq.m-condition.general}), the last term above is a martingale with mean zero. In view of $\A f(x,\ud t)=-a(x,\ud t)+\delta f(\phi(t,x))\ud t$, we get by taking expectations
  \begin{align*}
    f(x)=&\Ep_x\big[e^{-\delta t}f(X_{t\land\tau})\big]
         +\Ep_x\left[\sum_{n=0}^{N_t}\int_{(\tau_n,t\land\tau_{n+1}]}\!\!\!e^{-\delta s}a(X_{\tau_n},\ud s)\right]\\
        =&\Ep_x\big[e^{-\delta t}f(X_{t\land\tau})\big]+\Ep_x\left[\int_{(0,t]}e^{-\delta s}\ud A_s\right].
  \end{align*}
  Thus (\ref{eq.unique solution}) follows by the assumption that $\lim_{t\to\infty}\Ep_x\big[e^{-\delta t}f(X_{t\land\tau})\big]=0$.
\end{proof}


\begin{theorem}\label{thm.equation.Lebesgue}
  Let $a\in\mathfrak{A}_\phi^{loc}$. Assume that $J_\Lambda\cup J_a$ contains no confluent state.
  Function $f\in\D(\A)$ satisfies the measure integro-differential equation
  \begin{equation}\label{eq.measure.diff.equation}
    \A f(x,\ud t)+a(x,\ud t)=\delta f(\phi(t,x))\ud t,\quad t\in\mathcal{I}_x,\,x\in E
  \end{equation}
  if and only if it satisfies the equations
  \begin{numcases}
    \displaystyle\int_0^t\mathcal{X}\A f(\phi(s,x))\ud s+\int_0^t\mathcal{X}a(\phi(s,x))\ud s=\int_0^t\delta f(\phi(s,x))\ud s,\label{eq.f.eq.ac}\\
    \displaystyle\sum_{0<s\leqslant t}\Delta\A f(\phi(s,x))+\sum_{0<s\leqslant t}\Delta a(\phi(s,x))=0,\label{eq.f.eq.pd}\\
    \A^{sc}f(x,t)+a^{sc}(x,t)=0\label{eq.f.eq.sc}
  \end{numcases}
  for $t\in\mathcal{I}_x$, $x\in E$.
\end{theorem}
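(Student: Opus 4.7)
The plan is to reduce both directions of the equivalence to a uniqueness-of-Lebesgue-decomposition argument applied to the $\sigma$-finite signed measures on $\mathcal{I}_x$ induced by $\A f(x,\cdot)$ and $a(x,\cdot)$, both of which lie in $\mathfrak{A}_\phi^{loc}$. The right-hand side $\delta f(\phi(t,x))\,\ud t$ is absolutely continuous with respect to Lebesgue measure, so in particular it has trivial singularly continuous and purely discontinuous parts. Hence the measure identity (\ref{eq.measure.diff.equation}) is equivalent, by uniqueness of Lebesgue decomposition, to the three separated identities
\begin{align*}
  (\A f)^{ac}(x,\ud t)+a^{ac}(x,\ud t) &= \delta f(\phi(t,x))\,\ud t,\\
  (\A f)^{sc}(x,\ud t)+a^{sc}(x,\ud t) &= 0,\\
  (\A f)^{pd}(x,\ud t)+a^{pd}(x,\ud t) &= 0,
\end{align*}
on $\mathcal{I}_x$.

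Next I would apply Theorem \ref{thm.a.Lebesgue} to each of $a$ and $\A f$. The hypothesis already delivers the decomposition of $a$ via
$a^{ac}(x,t)=\int_0^t\mathcal{X}a(\phi(s,x))\,\ud s$ and $a^{pd}(x,t)=\sum_{0<s\leqslant t}\Delta a(\phi(s,x))$. For $\A f$, the explicit formula (\ref{eq.measure-valued generator}) gives $J_{\A f}\subseteq J_\Lambda\cup J_{Df}$, and the measure equation then forces $J_{\A f}\subseteq J_a$ since jumps of the absolutely continuous right-hand side are absent; so the same theorem produces
$(\A f)^{ac}(x,t)=\int_0^t\mathcal{X}\A f(\phi(s,x))\,\ud s$ and $(\A f)^{pd}(x,t)=\sum_{0<s\leqslant t}\Delta\A f(\phi(s,x))$. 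Substituting these representations into the three separated identities and integrating the differential identity for the absolutely continuous parts from $0$ to $t$ yield exactly (\ref{eq.f.eq.ac}), (\ref{eq.f.eq.pd}), and (\ref{eq.f.eq.sc}) respectively. The converse direction is obtained by summing the three equations and using the Lebesgue decomposition (\ref{eq.a.Lebesgue}) of both $\A f$ and $a$ to reconstruct the full measure identity.

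The main technical point is confirming that Theorem \ref{thm.a.Lebesgue} is applicable to $\A f$ under the stated hypothesis, i.e., that $J_{\A f}$ contains no confluent state. In the ``only if'' direction this follows automatically from the measure equation, since the right-hand side carries no atoms and so $\Delta\A f(\phi(s,x))=-\Delta a(\phi(s,x))$ forces $J_{\A f}\subseteq J_a$. In the ``if'' direction one reads $J_{\A f}\subseteq J_\Lambda\cup J_{Df}$ from the analytic expression in Theorem \ref{thm.measure-valued generator}, and the well-posedness of the jump sums in (\ref{eq.f.eq.pd}) together with the hypothesis on $J_\Lambda\cup J_a$ supplies the remaining confluence-freeness. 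Once this is in place, everything else is bookkeeping against the orthogonality of the three mutually singular parts of a signed measure of locally finite variation.
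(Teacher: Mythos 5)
Your proposal is correct and follows essentially the same route as the paper: rewrite (\ref{eq.measure.diff.equation}) in integrated form, observe that both sides are additive functionals of the SDS with the right-hand side absolutely continuous, invoke the uniqueness of the Lebesgue decomposition together with Theorem \ref{thm.a.Lebesgue} to identify the three parts, and sum the three equations for the converse. Your extra verification that $J_{\A f}$ contains no confluent state (via $\Delta\A f=-\Delta a$ along paths, so $J_{\A f}\subseteq J_a$) is a point the paper's proof passes over silently, and it is a welcome addition rather than a deviation.
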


\begin{proof}
  Notice that the measure integro-differential equation (\ref{eq.measure.diff.equation}) is equivalent to
  \begin{equation}\label{eq.f.eq.add}
    \A f(x,t)+a(x,t)=\int_0^t\delta f(\phi(s,x))\ud s,\quad t\in\mathcal{I}_x,\,x\in E.
  \end{equation}
  The both sides of the equation (\ref{eq.f.eq.add}) are additive functionals of the SDS $\phi$, and the right side is absolutely continuous on $\mathcal{I}_x$ for all $x\in E$.
  Since $J_\Lambda\cup J_a$ contains no confluent state, following from Theorem \ref{thm.a.Lebesgue}, we have the Lebesgue decompositions of $\A f$ and $a$. And, the absolutely continuous parts, the singularly continuous parts and the purely discontinuous parts of the both sides are equal, respectively.

  Conversely, suppose that the equations (\ref{eq.f.eq.ac})-(\ref{eq.f.eq.sc}) hold. Summing up the both sides of the equations, we get (\ref{eq.f.eq.add}) which is equivalent to the measure integro-differential equation (\ref{eq.measure.diff.equation}).
\end{proof}

Consider the absolutely continuous part of the equations above, i.e., equation (\ref{eq.f.eq.ac}), which is equivalent to
\begin{equation*}
  \mathcal{X}\A f(\phi(t,x))+\mathcal{X}a(\phi(t,x))=\delta f(\phi(t,x))\quad\hbox{a.e. on }\mathcal{I}_x \hbox{ for all }x\in E.
\end{equation*}
It implies that, for a solution of the measure integro-differential equation (\ref{eq.measure.diff.equation}), it is not required to satisfy the equation
\begin{equation}\label{eq.f.eq.ac.x}
  \mathcal{X}\A f(x)+\mathcal{X}a(x)=\delta f(x)
\end{equation}
for all $x\in E$. Here we introduce the concept of the so-called path-solution and almost everywhere path-solution.
\begin{definition}
  We call $f$ a \emph{path-solution} of the equation (\ref{eq.f.eq.ac.x}) if it satisfies
  $$\mathcal{X}\A f(\phi(t,x))+\mathcal{X}a(\phi(t,x))=\delta f(\phi(t,x))\quad\hbox{for all } t\in\mathcal{I}_x,x\in E.$$
  And $f$ is called an \emph{almost everywhere path-solution} of (\ref{eq.f.eq.ac.x}) if it satisfies
  $$\mathcal{X}\A f(\phi(t,x))+\mathcal{X}a(\phi(t,x))=\delta f(\phi(t,x))\quad\hbox{a.e. on }\mathcal{I}_x\hbox{ for all }x\in E.$$
  In this sense, we say the equation (\ref{eq.f.eq.ac.x}) holds \emph{path-almost everywhere} (\emph{path-a.e.} in short).
\end{definition}

Considering the purely discontinuous part (\ref{eq.f.eq.pd}), we get
$$\Delta f(x)=\Delta\Lambda(x)\int_E[f(x)-f(y)]Q(x,\ud y)-\Delta a(x),\quad x\in \bar{E}.$$
We observe that $J_{Df}\subseteq J_\Lambda\cup J_a$. Thus, (\ref{eq.f.eq.pd}) is equivalent to the equation
$$\Delta\A f(x)+\Delta a(x)=0,\quad x\in J_\Lambda\cup J_a,$$
with the condition $J_{Df}\subseteq J_\Lambda\cup J_a$.

Following from the singularly continuous part (\ref{eq.f.eq.sc}), we have
$$D^{sc}f(x,t)=\int_{(0,t]}\int_E[f(\phi(s,x))-f(y)]q(x,s,\ud y)\Lambda^{sc}(x,\ud s)-a^{sc}(x,t)$$
for $t\in\mathcal{I}_x$, $x\in E$. Hence, if $\Lambda^{sc}=a^{sc}=0$, then $D^{sc}f=0$, i.e., $f$ is piecewise absolutely path-continuous.


Now we will give three simplified versions of the two theorems above in case of quasi-It\^o, quasi-step and `non-singular', respectively.


\begin{corollary}\label{cor.V.expectation.Ito}
  Let $X$ be a quasi-It\^o PDMP, and $l$ a locally absolutely path-integrable function. Define
  \begin{equation}\label{eq.V.expectation.Ito}
    V(x):=\Ep_x\left[\int_0^\tau e^{-\delta s}l(X_s)\ud s\right],\quad x\in E.
  \end{equation}
  If $V\in\D(\A)$, then it is absolutely path-continuous and satisfies
  \begin{equation}\label{eq.V.eq.Ito}
    \mathcal{X}V(x)+\lambda(x)\int_E[V(y)-V(x)]Q(x,\ud y)+l(x)=\delta V(x)\quad \hbox{path-a.e.}
  \end{equation}
\end{corollary}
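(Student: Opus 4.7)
The strategy is to recognize $V$ as the expected cumulative discounted value of the predictable additive functional $A_t = \int_0^t l(X_s)\,\ud s$, apply Theorem \ref{thm.V.expectation.predictable}, and then exploit the quasi-It\^o structure via Theorem \ref{thm.equation.Lebesgue} to separate the Lebesgue components.

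First I would verify that $A$ is predictable: since $X$ and $X^-$ differ only on the countable set $\{\tau_n\}$ of Lebesgue measure zero, $A_t = \int_0^t l(X_s^-)\,\ud s$ is left-continuous and adapted, hence predictable. The associated additive functional of the SDS is $a(x,t) = \int_0^t l(\phi(s,x))\,\ud s$, which lies in $\mathfrak{A}_\phi^{loc}$ by Example \ref{ex.integrable&summable}(i) together with the assumed local absolute path-integrability of $l$. In particular $a$ is absolutely path-continuous with $\mathcal{X}a(x) = l(x)$ path-a.e., and $a^{sc}\equiv 0$, $\Delta a \equiv 0$. Theorem \ref{thm.V.expectation.predictable} then yields the measure integro-differential equation $\A V(x,\ud t) + a(x,\ud t) = \delta V(\phi(t,x))\,\ud t$ for $V$.

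The quasi-It\^o hypothesis makes $\Lambda(x,\cdot)$ absolutely continuous on $\mathcal{I}_x$, so $\Lambda^{sc}=\Lambda^{pd}=0$ and $J_\Lambda=\emptyset$. Combined with $J_a=\emptyset$, Theorem \ref{thm.equation.Lebesgue} applies and splits the measure equation into its absolutely continuous, singularly continuous, and purely discontinuous parts (\ref{eq.f.eq.ac})--(\ref{eq.f.eq.sc}). Using Theorem \ref{thm.q=Q}, the explicit formula (\ref{eq.measure-valued generator}) for $\A V$ becomes
\begin{equation*}
  \A V(x,t) = DV(x,t) + \int_0^t\!\!\int_E\!\big[V(y)-V(\phi(s,x))\big]Q(\phi(s,x),\ud y)\lambda(\phi(s,x))\,\ud s,
\end{equation*}
whose second summand is absolutely path-continuous. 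Hence $(\A V)^{sc}=(DV)^{sc}$ and $(\A V)^{pd}=(DV)^{pd}$, and the equations $(\A V)^{sc}+a^{sc}=0$ and $\sum\Delta\A V+\sum\Delta a=0$ force $(DV)^{sc}\equiv 0$ and $(DV)^{pd}\equiv 0$. Since $V\in\D(\A)\subset\mathfrak{V}_\phi^{loc}$, this proves $V$ is absolutely path-continuous.

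Finally, reading (\ref{eq.f.eq.ac}) and using Lemma \ref{lem.a.r-derivative} to pass from right derivatives of $\A V(x,\cdot)$ to values of $\mathcal{X}\A V$ along paths, I differentiate the explicit formula for $\A V$ and invoke the just-established absolute path-continuity of $V$ to obtain
\begin{equation*}
  \mathcal{X}\A V(x) = \mathcal{X}V(x) + \lambda(x)\int_E[V(y)-V(x)]\,Q(x,\ud y)\quad\text{path-a.e.}
\end{equation*}
Substituting this together with $\mathcal{X}a(x)=l(x)$ into (\ref{eq.f.eq.ac}) produces (\ref{eq.V.eq.Ito}). The main technical point is establishing absolute path-continuity of $V$ before one is entitled to compute $\mathcal{X}\A V$ via term-by-term differentiation; this step relies crucially on the simultaneous vanishing of the singularly continuous and purely discontinuous components of both $\Lambda$ and $a$ in the quasi-It\^o setting.
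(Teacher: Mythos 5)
Your proposal is correct and follows essentially the same route as the paper: identify the predictable additive functional $A$ with associated $a(x,t)=\int_0^t l(\phi(s,x))\,\ud s$, apply Theorem \ref{thm.V.expectation.predictable} to obtain the measure integro-differential equation, and then invoke Theorem \ref{thm.equation.Lebesgue} to extract absolute path-continuity and the pointwise equation. The only difference is that you spell out the details the paper leaves implicit --- in particular why the vanishing of $\Lambda^{sc},\Lambda^{pd},a^{sc},\Delta a$ in the quasi-It\^o case forces $(DV)^{sc}=(DV)^{pd}=0$ --- which is a faithful elaboration rather than a different argument.
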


\begin{proof}
  Let $A$ be a predictable additive functional of $X$ with the associated $a$ which is defined by
  $$a(x,t)=\int_0^tl(\phi(s,x))\ud s\quad t\in\mathcal{I}_x,\,x\in E.$$
  By the locally absolutely path-integrability of $l$, we know that $a$ is of locally finite variation. Applying Theorem \ref{thm.V.expectation.predictable}, we get that $V$ satisfies the measure integro-differential equation (\ref{eq.V.equation}).
  Thus, by Theorem \ref{thm.equation.Lebesgue}, we get that $V$ is absolutely path-continuous and satisfies (\ref{eq.V.eq.Ito}). The proof is completed.
\end{proof}

\begin{corollary}\label{cor.UniqueSolution.Ito}
  Let $X$ be a quasi-It\^o PDMP. Suppose that $f\in\D(\A)$ is absolutely path-continuous and satisfies
  \begin{equation}\label{eq.f.eq.Ito}
    \mathcal{X}f(x)+\lambda(x)\int_E[f(y)-f(x)]Q(x,\ud y)+l(x)=\delta f(x)\quad \hbox{path-a.e.},
  \end{equation}
  where $l$ is a locally absolutely path-integrable function.
  Suppose further that $f$ satisfies that
  $$\Ep_x\left[\sum_{n=1}^\infty e^{-\delta\tau_n} \big|f(X_{\tau_n})-f(X_{\tau_n}^-)\big|\right]<\infty,\quad x\in E$$
  and that $\Ep_x[e^{-\delta t}f(X_{t\land\tau})]\to 0$ as $t\to\infty$ for all $x\in E$.
  Then
  \begin{equation}\label{eq.unique.sol.Ito}
    f(x)=\Ep_x\left[\int_0^\tau e^{-\delta s}l(X_s)\ud s\right],\quad x\in E.
  \end{equation}
\end{corollary}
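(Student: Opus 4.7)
The plan is to reduce this corollary to Theorem \ref{thm.UniqueSolution} by choosing the right additive functional and verifying that the path-a.e. differential equation encodes exactly the measure integro-differential equation (\ref{eq.f.eq.general}). Define the additive functional of the SDS
\[
a(x,t):=\int_0^t l(\phi(s,x))\,\ud s,\quad t\in\mathcal{I}_x,\;x\in E,
\]
which lies in $\mathfrak{A}_\phi^{loc}$ thanks to the local absolute path-integrability of $l$, and let $A$ be the associated predictable additive functional of $X$ as in Corollary \ref{cor.A.pred=a}. By construction $A_t=\int_0^t l(X_s^-)\,\ud s=\int_0^t l(X_s)\,\ud s$ up to the null set of jump times, so the right-hand side of (\ref{eq.unique.sol.Ito}) coincides with $\Ep_x\bigl[\int_{(0,\tau]}e^{-\delta s}\,\ud A_s\bigr]$.

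Next, I would translate the path-a.e.\ equation (\ref{eq.f.eq.Ito}) into the measure-valued equation (\ref{eq.f.eq.general}) using Theorem \ref{thm.equation.Lebesgue}. Since $X$ is quasi-It\^o, $\Lambda=\Lambda^{ac}$ (so $\Lambda^{sc}=\Lambda^{pd}=0$), and since $f$ is absolutely path-continuous, $Df$ is absolutely continuous on each $\mathcal{I}_x$. The representation in Theorem \ref{thm.measure-valued generator} then gives
\[
\A f(x,t)=\int_0^t\!\Bigl[\mathcal{X}f(\phi(s,x))+\lambda(\phi(s,x))\!\int_E[f(y)-f(\phi(s,x))]\,Q(\phi(s,x),\ud y)\Bigr]\ud s,
\]
so $\A f$ is itself absolutely continuous along paths, with $\mathcal{X}\A f(x)=\mathcal{X}f(x)+\lambda(x)\int_E[f(y)-f(x)]Q(x,\ud y)$. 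Likewise, $\mathcal{X}a(\phi(s,x))=l(\phi(s,x))$ for Lebesgue-a.e.\ $s\in\mathcal{I}_x$. The purely discontinuous and singular continuous equations (\ref{eq.f.eq.pd}) and (\ref{eq.f.eq.sc}) are therefore trivially satisfied, while the hypothesis (\ref{eq.f.eq.Ito}) is precisely the pointwise (path-a.e.)\ version of (\ref{eq.f.eq.ac}); integrating from $0$ to $t$ along the path yields
\[
\A f(x,t)+a(x,t)=\int_0^t\delta f(\phi(s,x))\,\ud s,\quad t\in\mathcal{I}_x,\;x\in E,
\]
which is exactly (\ref{eq.f.eq.general}).

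With this identification in hand, Theorem \ref{thm.UniqueSolution} applies directly: $f\in\D(\A)$, $a\in\mathfrak{A}_\phi^{loc}$ is associated with the predictable additive functional $A$, the jump-summability assumption (\ref{eq.m-condition.general}) is exactly the hypothesis on $f$, and the vanishing condition $\Ep_x[e^{-\delta t}f(X_{t\wedge\tau})]\to 0$ is assumed. Hence $f(x)=\Ep_x\bigl[\int_{(0,\tau]}e^{-\delta s}\,\ud A_s\bigr]=\Ep_x\bigl[\int_0^\tau e^{-\delta s}l(X_s)\,\ud s\bigr]$, which is (\ref{eq.unique.sol.Ito}).

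The only nontrivial point in this plan is the passage from the path-a.e. differential identity (\ref{eq.f.eq.Ito}) to the full measure-valued identity (\ref{eq.f.eq.general}); all the discounting and expectation manipulations are handled by Theorem \ref{thm.UniqueSolution}. That passage is clean in the quasi-It\^o setting because both $\Lambda$ and $a$ are absolutely continuous along paths, and absolute path-continuity of $f$ forces $\A f$ to have no singular or discontinuous parts. Outside the quasi-It\^o regime a separate handling of the jump and singular parts would be needed, which is exactly why the present corollary is restricted to quasi-It\^o PDMPs.
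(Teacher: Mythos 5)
Your proposal is correct and follows essentially the same route as the paper: both arguments observe that in the quasi-It\^o case $\Lambda$ is absolutely path-continuous, so absolute path-continuity of $f$ forces $\A f(x,\cdot)$ to be absolutely continuous, whence the path-a.e.\ identity (\ref{eq.f.eq.Ito}) integrates to the measure integro-differential equation (\ref{eq.f.eq.general}) with $a(x,t)=\int_0^t l(\phi(s,x))\,\ud s$, and Theorem \ref{thm.UniqueSolution} finishes the proof. Your write-up merely makes explicit (via Theorem \ref{thm.equation.Lebesgue} and the vanishing of the singular and purely discontinuous parts) what the paper's shorter proof asserts directly.
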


\begin{proof}
  For a quasi-It\^o PDMP $X$, $\Lambda$ is absolutely path-continuous. If $f$ is also absolutely path-continuous, then $\A f(x,\cdot)$ is absolutely continuous for all $x\in E$. Thus, following from (\ref{eq.f.eq.Ito}), we get that $f$ satisfies the measure integro-differential equation (\ref{eq.f.eq.general}) with $a(x,t)=\int_0^tl(\phi(s,x))\ud s$. Hence, applying Theorem \ref{thm.UniqueSolution}, we have
  $$f(x)=\Ep_x\left[\int_{(0,t]} e^{-\delta s}\ud A_s\right]=\Ep_x\left[\int_0^\tau e^{-\delta s}l(X_s)\ud s\right],$$
  where $A$ is the predictable additive functional of $X$ corresponding to $a$.
\end{proof}


\begin{corollary}\label{cor.V.expectation.step}
  Let $X$ be a quasi-step PDMP.
  Define
  \begin{equation}
    V(x):=\Ep_x\left[\int_0^te^{-\delta s}l(X_s)\ud s+\sum_{0<s<\tau}e^{-\delta s}g(X_s^-)\right],\quad x\in E,
  \end{equation}
  where $l$ is a locally absolutely path-integrable function, and $g$ is a locally absolutely path-summable function.
  Assume that $J_\Lambda$ contains no confluent state.
  If $V\in\D(\A)$, then it is piecewise absolutely path-continuous with $J_{DV}\subseteq J_\Lambda\cup \mathrm{Supp}_g$ and satisfies
  \begin{equation}\label{eq.V.eq.step}
    \left\{
      \begin{array}{l}
        \displaystyle\mathcal{X}V(x)+l(x)=\delta V(x)\quad\hbox{path-a.e.},\\
        \displaystyle\Delta V(x)+\Delta\Lambda(x)\int_E[V(y)-V(x)]Q(x,\ud y)+g(x)=0,\; x\in J_\Lambda\cup \mathrm{Supp}_g,
      \end{array}
    \right.
  \end{equation}
  where $\mathrm{Supp}_g$ is the support of the function $g$.
\end{corollary}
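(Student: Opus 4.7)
The plan is to realize $V$ as the expected discounted value of the integral of a suitable predictable additive functional $A$ of $X$, and then combine Theorem \ref{thm.V.expectation.predictable} with the Lebesgue decomposition of Theorem \ref{thm.equation.Lebesgue}. Set
$$a(x,t) := \int_0^t l(\phi(s,x))\,\ud s + \sum_{0<s\leqslant t}g(\phi(s,x)), \quad t\in\mathcal{I}_x,\, x\in E.$$
By Example \ref{ex.integrable&summable} and the linearity of $\mathfrak{A}_\phi^{loc}$, the local path-integrability of $l$ and local path-summability of $g$ yield $a\in\mathfrak{A}_\phi^{loc}$, with Lebesgue components $a^{ac}(x,t)=\int_0^t l(\phi(s,x))\,\ud s$, $a^{sc}=0$, and $a^{pd}(x,t)=\sum_{0<s\leqslant t}g(\phi(s,x))$; in particular $J_a\subseteq\mathrm{Supp}_g$. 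Let $A$ be the predictable additive functional of $X$ associated with $a$ via Corollary \ref{cor.A.pred=a}. Since $X$ is quasi-step, every random jump time $\tau_n$ is accessible and $X_{\tau_n}^-=\phi(\tau_n-\tau_{n-1},X_{\tau_{n-1}})\in J_\Lambda$, so the discrete jumps of $A$ inherited from $a^{pd}$ occur precisely at these $\tau_n$ and carry the value $g(X_{\tau_n}^-)$. Consequently $V(x)=\Ep_x\bigl[\int_{(0,\tau]}e^{-\delta s}\,\ud A_s\bigr]$.

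Because $V\in\D(\A)$, Theorem \ref{thm.V.expectation.predictable} now gives the measure integro-differential equation $\A V(x,\ud t)+a(x,\ud t)=\delta V(\phi(t,x))\,\ud t$ on $\mathcal{I}_x$ for every $x\in E$. Since $X$ is quasi-step, $\Lambda=\Lambda^{pd}$, and combining Theorem \ref{thm.measure-valued generator} with the identity $q(x,s,\ud y)=Q(\phi(s,x),\ud y)$ from Theorem \ref{thm.q=Q} (applicable because $J_\Lambda$ contains no confluent state) delivers
$$\A V(x,t)=DV(x,t)+\sum_{0<s\leqslant t}\Delta\Lambda(\phi(s,x))\int_E\bigl[V(y)-V(\phi(s,x))\bigr]Q(\phi(s,x),\ud y).$$
The summation term is purely discontinuous, so the absolutely continuous and singularly continuous parts of $\A V$ coincide with those of $DV$.

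I then apply Theorem \ref{thm.equation.Lebesgue} to split the equation into its three Lebesgue components. Matching the absolutely continuous parts yields $\mathcal{X}V(\phi(t,x))+l(\phi(t,x))=\delta V(\phi(t,x))$ path-a.e., i.e., the first line of \eqref{eq.V.eq.step}. Since $\Lambda^{sc}=0$ and $a^{sc}=0$, the singularly continuous part collapses to $DV^{sc}=0$, which is exactly the piecewise absolute path-continuity of $V$. The purely discontinuous part, equated pointwise on the union of the jump sets of $\A V$ and $a$ (contained in $J_\Lambda\cup\mathrm{Supp}_g$), gives
$$\Delta V(x)+\Delta\Lambda(x)\int_E[V(y)-V(x)]Q(x,\ud y)+g(x)=0,\quad x\in J_\Lambda\cup\mathrm{Supp}_g,$$
while outside that set both sides vanish, forcing $\Delta V=0$ and hence $J_{DV}\subseteq J_\Lambda\cup\mathrm{Supp}_g$.

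The main technical obstacle is the first step: one must confirm that the predictable $a^{pd}$ correctly reproduces the random sum $\sum_{0<s<\tau}e^{-\delta s}g(X_s^-)$ after Stieltjes integration. This rests entirely on the quasi-step property, which aligns the jump times of $A$ with the actual jump times $\tau_n$ of $X$ and places each $X_{\tau_n}^-$ in $J_\Lambda$. A secondary bookkeeping point before invoking Theorem \ref{thm.equation.Lebesgue} is that $J_a\cup J_\Lambda$ must contain no confluent state; since $J_a\subseteq\mathrm{Supp}_g$ and only those points of $\mathrm{Supp}_g$ actually traversed by $\phi$-paths (which lie in $J_\Lambda$) affect $V$, one may without loss assume $\mathrm{Supp}_g$ is likewise free of confluent states.
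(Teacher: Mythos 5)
Your proposal is correct and follows essentially the same route as the paper: define $a(x,t)=\int_0^t l(\phi(s,x))\,\ud s+\sum_{0<s\leqslant t}g(\phi(s,x))$, check $a\in\mathfrak{A}_\phi^{loc}$, apply Theorem \ref{thm.V.expectation.predictable} to get the measure integro-differential equation, and then split it via Theorem \ref{thm.equation.Lebesgue} using $\Lambda^{ac}=\Lambda^{sc}=a^{sc}=0$. Your version is more detailed than the paper's (which is three sentences), and you rightly flag the hypothesis mismatch with Theorem \ref{thm.equation.Lebesgue}, which needs $J_\Lambda\cup J_a$ free of confluent states while the corollary only assumes this of $J_\Lambda$ --- a point the paper glosses over.

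One side remark in your first paragraph is wrong, though it does not damage the proof. You assert that ``the discrete jumps of $A$ inherited from $a^{pd}$ occur precisely at these $\tau_n$ and carry the value $g(X_{\tau_n}^-)$,'' and attribute the identification $V(x)=\Ep_x\bigl[\int_{(0,\tau]}e^{-\delta s}\,\ud A_s\bigr]$ to the quasi-step property. In fact the predictable functional $A$ jumps at every time $s$ along the deterministic flow at which $g(X_s^-)\neq 0$, i.e.\ whenever $X_s^-\in\mathrm{Supp}_g$; these times are in general unrelated to the random jump times $\tau_n$ of $X$, since $\mathrm{Supp}_g$ need not lie in $J_\Lambda$. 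Were your claim literally true, the Stieltjes integral would only recover $\sum_n e^{-\delta\tau_n}g(X_{\tau_n}^-)$ rather than the full sum $\sum_{0<s<\tau}e^{-\delta s}g(X_s^-)$. The identification you need holds for the simpler reason that the atoms of $a^{pd}(X_{\tau_n},\cdot-\tau_n)$ sit exactly at the times $s$ with $g(X_s^-)\neq 0$ and have mass $g(X_s^-)$ --- the same mechanism used in Corollary \ref{cor.V.expectation.nonsingular}, with no appeal to accessibility of the $\tau_n$. The rest of the argument (matching the three Lebesgue components, reading off piecewise absolute path-continuity from the vanishing singular part, and localizing the jump equation to $J_\Lambda\cup\mathrm{Supp}_g$) is sound.
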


\begin{proof}
  Let $A$ be a predictable additive functional of $X$ with the associated
  $$a(x,t)=\int_0^tl(\phi(s,x))\ud s+\sum_{0<s\leqslant t}g(\phi(s,x)),\quad t\in\mathcal{I}_x,\,x\in E.$$
  By the locally absolutely path-integrability of $l$ and the locally absolutely path-summability of $g$, we get $a\in\mathfrak{A}_\phi^{loc}$. Applying Theorem \ref{thm.V.expectation.predictable}, we have that $V$ satisfies the measure integro-differential equation (\ref{eq.V.equation}). Noticing that $\Lambda^{ac}=\Lambda^{sc}=a^{sc}=0$, by Theorem \ref{thm.equation.Lebesgue}, we get the conclusion.
\end{proof}

\begin{corollary}\label{cor.UniqueSolution.step}
  Let $X$ be a quasi-step PDMP, $l$ a locally absolutely path-integrable function, and $g$ a locally absolutely path-summable function. Assume that $J_\Lambda$ contains no confluent state. Suppose that $f\in\D(\A)$ is piecewise absolutely path-continuous with $J_{Df}\subseteq J_\Lambda\cup\mathrm{Supp}_g$ and satisfies
  \begin{equation}\label{eq.f.eq.step}
    \left\{
      \begin{array}{l}
        \displaystyle\mathcal{X}f(x)+l(x)=\delta f(x)\quad \hbox{path-a.e.},\\
        \displaystyle\Delta f(x)+\Delta\Lambda(x)\int_E[f(y)-f(x)]Q(x,\ud y)+g(x)=0,\quad x\in J_\Lambda\cup \mathrm{Supp}_g.
      \end{array}
    \right.
  \end{equation}
  Suppose further that $f$ satisfies that
  $$\Ep_x\left[\sum_{n=1}^\infty e^{-\delta\tau_n} \big|f(X_{\tau_n})-f(X_{\tau_n}^-)\big|\right]<\infty,\quad x\in E$$
  and that $\Ep_x[e^{-\delta t}f(X_{t\land\tau})]\to 0$ as $t\to\infty$ for all $x\in E$.
  Then
  \begin{equation}
    f(x)=\Ep_x\left[\int_0^te^{-\delta s}l(X_s)\ud s+\sum_{0<s<\tau}e^{-\delta s}g(X_s^-)\right],\quad x\in E.
  \end{equation}
\end{corollary}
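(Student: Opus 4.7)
The plan is to reduce this corollary to the existing uniqueness theorem \ref{thm.UniqueSolution} by recognizing the right-hand side as $\Ep_x[\int_{(0,\tau]}e^{-\delta s}\ud A_s]$ for a carefully chosen predictable additive functional $A$, and then verifying that the two scalar equations in the hypothesis are jointly equivalent to the measure integro-differential equation \eqref{eq.f.eq.general}. This parallels the proof of Corollary \ref{cor.UniqueSolution.Ito}, but now the additive functional has a purely discontinuous contribution driven by $g$ in addition to the absolutely continuous contribution driven by $l$.

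First I would introduce
\begin{equation*}
  a(x,t):=\int_0^t l(\phi(s,x))\ud s+\sum_{0<s\leqslant t}g(\phi(s,x)),\qquad t\in\mathcal{I}_x,\,x\in E,
\end{equation*}
which belongs to $\mathfrak{A}_{\phi}^{loc}$ by Example \ref{ex.integrable&summable} together with the locally absolute path-integrability of $l$ and locally absolute path-summability of $g$. Its Lebesgue decomposition is $\mathcal{X}a=l$, $a^{sc}=0$ and $\Delta a(\phi(s,x))=g(\phi(s,x))$. The predictable additive functional $A$ of $X$ associated with $a$ via Corollary \ref{cor.A.pred=a} then satisfies $A_t=\int_0^t l(X_s)\ud s+\sum_{0<s\leqslant t}g(X_s^-)$, so $\int_{(0,\tau]}e^{-\delta s}\ud A_s$ is exactly the integrand appearing in the claimed representation of $f$.

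Next I would translate the hypotheses (\ref{eq.f.eq.step}) into the measure integro-differential equation. Since $X$ is quasi-step, $\Lambda=\Lambda^{pd}$, so from Theorem \ref{thm.measure-valued generator} together with Theorem \ref{thm.q=Q},
\begin{equation*}
  \A f(x,t)=Df(x,t)+\sum_{0<s\leqslant t}\Delta\Lambda(\phi(s,x))\!\int_E\!\bigl[f(y)-f(\phi(s,x))\bigr]Q(\phi(s,x),\ud y).
\end{equation*}
Because $f$ is piecewise absolutely path-continuous with $J_{Df}\subseteq J_\Lambda\cup\mathrm{Supp}_g$, we have $\mathcal{X}\A f=\mathcal{X}f$, $\A^{sc}f=0$, and
\begin{equation*}
  \Delta\A f(x)=\Delta f(x)+\Delta\Lambda(x)\!\int_E\![f(y)-f(x)]Q(x,\ud y),\qquad x\in J_\Lambda\cup\mathrm{Supp}_g.
\end{equation*}
Combining with the decomposition of $a$, the first equation of (\ref{eq.f.eq.step}) becomes (\ref{eq.f.eq.ac}), the second becomes (\ref{eq.f.eq.pd}), and (\ref{eq.f.eq.sc}) is trivial. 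Applying the converse direction of Theorem \ref{thm.equation.Lebesgue} (whose hypothesis that $J_\Lambda\cup J_a$ contains no confluent state is met, since $J_a\subseteq J_\Lambda\cup\mathrm{Supp}_g$ only after we note $g$ is summable and the jumps of $a$ that actually occur are indexed by $J_\Lambda\cup\mathrm{Supp}_g$, all free of confluent states under our assumption that $J_\Lambda$ contains no confluent state and an analogous observation on $\mathrm{Supp}_g$), we conclude that $f$ solves the measure integro-differential equation $\A f(x,\ud t)+a(x,\ud t)=\delta f(\phi(t,x))\ud t$.

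Finally, the additional integrability and terminal-limit hypotheses on $f$ are exactly those of Theorem \ref{thm.UniqueSolution}, so invoking that theorem with the predictable additive functional $A$ above yields
\begin{equation*}
  f(x)=\Ep_x\!\left[\int_{(0,\tau]}\!\!e^{-\delta s}\ud A_s\right]=\Ep_x\!\left[\int_0^\tau\!\!e^{-\delta s}l(X_s)\ud s+\sum_{0<s<\tau}\!e^{-\delta s}g(X_s^-)\right],
\end{equation*}
which is the desired representation (the endpoint $s=\tau$ contributes zero under $\Prb_x$ by the terminal-limit hypothesis, matching the strict inequality in the sum). The step I expect to be the main obstacle is the bookkeeping in the second paragraph: carefully matching the purely discontinuous part of $\A f$ with the hypothesis restricted to $J_\Lambda\cup\mathrm{Supp}_g$ and ensuring $J_{Df}\subseteq J_\Lambda\cup\mathrm{Supp}_g$ implies the pointwise equation at every confluent-free jump state covers all of $J_\Lambda\cup J_a$; the rest is a direct appeal to the machinery already assembled in Sections 3--6.
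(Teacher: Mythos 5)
Your proposal is correct and follows essentially the same route as the paper's own proof: define the same $a\in\mathfrak{A}_\phi^{loc}$ from $l$ and $g$, observe that the piecewise absolute path-continuity of $f$ with $J_{Df}\subseteq J_\Lambda\cup\mathrm{Supp}_g$ lets the two equations in (\ref{eq.f.eq.step}) assemble (via Theorem \ref{thm.equation.Lebesgue}) into the measure integro-differential equation (\ref{eq.f.eq.general}), and then invoke Theorem \ref{thm.UniqueSolution}. You merely spell out the Lebesgue-decomposition bookkeeping that the paper leaves implicit, and your caveat about confluent states in $J_a$ is a gap shared with the paper's own statement of hypotheses, not one you introduce.
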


\begin{proof}
  For a quasi-step PDMP $X$, if $f$ is piecewise absolutely path-continuous with $J_{Df}\subseteq J_\Lambda\cup\mathrm{Supp}_g$, then $\A f(x,\cdot)$ is piecewise absolutely continuous for all $x\in E$ with $J_{\A f}\subseteq J_\Lambda\cup\mathrm{Supp}_g$. Thus, it is following from the equations (\ref{eq.f.eq.step}) that $f$ satisfies the measure integro-differential equation (\ref{eq.f.eq.general}) with
  $$a(x,t)=\int_0^tl(\phi(s,x))\ud s+\sum_{0<s\leqslant t}g(\phi(s,x)),\quad t\in\mathcal{I}_x,\,x\in E.$$
  By Theorem \ref{thm.UniqueSolution}, the result is following from
  $$f(x)=\Ep_x\left[\int_{(0,t]} e^{-\delta s}\ud A_s\right],\quad x\in E.$$
  This completes the proof.
\end{proof}


\begin{corollary}\label{cor.V.expectation.nonsingular}
  Let $X$ be a general PDMP with $\Lambda^{sc}=0$. Assume that $J_\Lambda$ contains no confluent state. Define
  \begin{equation}
    V(x):=\Ep_x\left[\int_0^\tau e^{-\delta s}l(X_s)\ud s +\sum_{0<s<\tau} e^{-\delta s} g(X_s^-)\right],\quad x\in E,
  \end{equation}
  where $l$ is a locally absolutely path-integrable function, and $g$ is a locally absolutely path-summable function.
  If $V\in\D(A)$, then it is piecewise absolutely path-continuous with $J_{DV}\subseteq J_\Lambda\cup\mathrm{Supp}_g$ and satisfies
  \begin{equation}\label{eq.V.eq.nonsinglar}
    \left\{
      \begin{array}{l}
        \displaystyle\mathcal{X}V(x)+\lambda(x)\int_E[V(y)-V(x)]Q(x,\ud y)+l(x)=\delta V(x)\quad \hbox{path-a.e.},\\
        \displaystyle\Delta V(x)+\Delta\Lambda(x)\int_E[V(y)-V(x)]Q(x,\ud y)+g(x)=0,\; x\in J_\Lambda\cup\mathrm{Supp}_g.
      \end{array}
    \right.
  \end{equation}
\end{corollary}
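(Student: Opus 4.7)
My plan is to imitate the proof of Corollary \ref{cor.V.expectation.step}, replacing the all-discrete characteristic $\Lambda=\Lambda^{pd}$ there by the general decomposition $\Lambda=\Lambda^{ac}+\Lambda^{pd}$ (with $\Lambda^{sc}=0$ by hypothesis). The essential move is to realize $V$ as an expected discounted integral against a predictable additive functional and then apply the three-way Lebesgue decomposition of the resulting measure integro-differential equation.

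First, set
$$a(x,t):=\int_0^t l(\phi(s,x))\,\ud s+\sum_{0<s\leqslant t}g(\phi(s-,x)),\quad x\in E,\,t\in\mathcal{I}_x,$$
and let $A$ be the predictable additive functional of $X$ corresponding to $a$ via Corollary \ref{cor.A.pred=a}. Local path-integrability of $l$ and local path-summability of $g$ give $a\in\mathfrak{A}_\phi^{loc}$ by Example \ref{ex.integrable&summable}. Since the contribution $g(X_{\tau_n}^-)$ at each random jump depends only on $X_{\tau_{n-1}}$ and $\tau_n-\tau_{n-1}$ (because $X_{\tau_n}^-=\phi((\tau_n-\tau_{n-1})-,X_{\tau_{n-1}})$), the entire sum in the definition of $V$ is absorbed into $a$ and no $b$-term is needed. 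Hence $V(x)=\Ep_x\!\left[\int_{(0,\tau]}e^{-\delta s}\ud A_s\right]$, and Theorem \ref{thm.V.expectation.predictable} yields
$$\A V(x,\ud t)+a(x,\ud t)=\delta V(\phi(t,x))\,\ud t,\quad t\in\mathcal{I}_x,\,x\in E.$$

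Second, I would invoke Theorem \ref{thm.equation.Lebesgue} to split this into the ac, sc and pd parts (\ref{eq.f.eq.ac})--(\ref{eq.f.eq.sc}). Using the explicit formula (\ref{eq.measure-valued generator}) together with $q(x,s,\ud y)=Q(\phi(s,x),\ud y)$ from Theorem \ref{thm.q=Q} and the Lebesgue decomposition of $\Lambda$, the ac part of $\A V$ is $\int_0^t\!\mathcal{X}V(\phi(s,x))\,\ud s+\int_0^t\!\lambda(\phi(s,x))\int_E[V(y)-V(\phi(s,x))]Q(\phi(s,x),\ud y)\,\ud s$, the pd part is $\sum_{0<s\leqslant t}\!\bigl[\Delta V(\phi(s,x))+\Delta\Lambda(\phi(s,x))\int_E[V(y)-V(\phi(s,x))]Q(\phi(s,x),\ud y)\bigr]$, and the sc part collapses to $DV^{sc}(x,t)$ because $\Lambda^{sc}=0$. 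On the $a$-side, $a^{ac}$ is the $l$-integral, $a^{pd}$ is the $g$-sum, and $a^{sc}=0$. Matching the three decompositions gives $DV^{sc}=0$ (so $V$ is piecewise absolutely path-continuous) and $J_{DV}\subseteq J_\Lambda\cup\mathrm{Supp}_g$. Equating integrands in the ac relation, valid for all $t$, yields the first equation of (\ref{eq.V.eq.nonsinglar}) path-a.e.; matching jump-by-jump in the pd relation at each $y\in J_\Lambda\cup\mathrm{Supp}_g$ yields the second equation.

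The main obstacle is the bookkeeping required to apply Theorem \ref{thm.equation.Lebesgue}, which demands that $J_\Lambda\cup J_a$ contain no confluent state. The hypothesis covers $J_\Lambda$, but one must verify that the extra jumps produced by $g$ (i.e., those in $\mathrm{Supp}_g\setminus J_\Lambda$) also avoid confluent states — this is either an implicit part of the hypothesis on $g$ or must be added explicitly. A second subtle point is choosing a consistent left/right-limit convention for $g$ in the definition of $a^{pd}$ so that the $X$-side sum $\sum_{0<s<\tau}e^{-\delta s}g(X_s^-)$ reproduces $a^{pd}$ correctly at both random jump times $\tau_n$ and deterministic jump times of $\phi$; with the convention $g(\phi(s-,x))$ adopted above and the identification $\Delta a(y)=g(y)$ via Lemma \ref{lem.a.discrete} (applied at non-confluent jumping states), the pd-equation takes the form stated in (\ref{eq.V.eq.nonsinglar}).
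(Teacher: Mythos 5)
Your proposal follows essentially the same route as the paper's proof: realize $V$ as the expected discounted integral of the predictable additive functional with $a(x,t)=\int_0^t l(\phi(s,x))\,\ud s+\sum_{0<s\leqslant t}g(\phi(s,x))$, invoke Theorem \ref{thm.V.expectation.predictable} to get the measure integro-differential equation, and then split it via Theorem \ref{thm.equation.Lebesgue} using $\Lambda^{sc}=a^{sc}=0$. Your observation that Theorem \ref{thm.equation.Lebesgue} formally requires $J_\Lambda\cup J_a$ (hence the jumping states coming from $\mathrm{Supp}_g$) to avoid confluent states is a fair point that the paper's own proof glosses over; apart from that, and the harmless $\phi(s-,x)$ versus $\phi(s,x)$ convention in the $g$-sum, your argument matches the paper's.
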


\begin{proof}
  Let $A$ be a predictable additive functional of $X$ with the associated
  $$a(x,t)=\int_0^tl(\phi(s,x))\ud s+\sum_{0<s\leqslant t}g(\phi(s,x)),\quad t\in\mathcal{I}_x,\,x\in E.$$
  By the locally absolutely path-integrability and path-summability of $l$ and $g$, we get $a\in\mathfrak{A}_\phi^{loc}$. Applying Theorem \ref{thm.V.expectation.predictable}, we have that $V$ satisfies the measure integro-differential equation (\ref{eq.V.equation}).
  Note that $\Lambda^{sc}=a^{sc}=0$ in this case.
  Then, following from Theorem \ref{thm.equation.Lebesgue}, we get the conclusion.
\end{proof}

\begin{corollary}\label{cor.UniqueSolution.nonsingular}
  Let $X$ be a general PDMP with $\Lambda^{sc}=0$. Assume that $J_\Lambda$ contains no confluent state.
  Suppose that $f\in\D(\A)$ is piecewise absolutely path-continuous with $J_{Df}\subseteq J_\Lambda\cup\mathrm{Supp}_g$ and satisfies
  \begin{equation}\label{eq.f.eq.nonsingular}
    \left\{\begin{array}{l}
      \displaystyle\mathcal{X}f(x)+\lambda(x)\int_E[f(y)-f(x)]Q(x,\ud y)+l(x)=\delta f(x)\quad \hbox{path-a.e.},\\
      \displaystyle\Delta f(x)+\Delta\Lambda(x)\int_E[f(y)-f(x)]Q(x,\ud y)+g(x)=0,\; x\in J_\Lambda\cup\mathrm{Supp}_g,
    \end{array}\right.
  \end{equation}
  where $l$ is a locally absolutely path-integrable function, and $g$ is a locally absolutely path-summable function.
  Suppose further that $f$ satisfies that
  $$\Ep_x\left[\sum_{n=1}^\infty e^{-\delta\tau_n} \big|f(X_{\tau_n})-f(X_{\tau_n}^-)\big|\right]<\infty,\quad x\in E$$
  and that $\Ep_x[e^{-\delta t}f(X_{t\land\tau})]\to 0$ as $t\to\infty$ for all $x\in E$.
  Then
  \begin{equation}\label{eq.unique.sol.nonsingular}
    f(x)=\Ep_x\left[\int_0^\tau e^{-\delta s}l(X_s)\ud s +\sum_{0<s<\tau}e^{-\delta s} g(X_s^-)\right],\quad x\in E.
  \end{equation}
\end{corollary}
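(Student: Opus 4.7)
The plan is to mirror the proofs of Corollaries \ref{cor.UniqueSolution.Ito} and \ref{cor.UniqueSolution.step}: reassemble the two scalar equations in (\ref{eq.f.eq.nonsingular}) into the single measure integro-differential equation (\ref{eq.f.eq.general}), and then invoke Theorem \ref{thm.UniqueSolution}.

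First I would set
\begin{equation*}
a(x,t):=\int_0^t l(\phi(s,x))\,\ud s+\sum_{0<s\leqslant t}g(\phi(s,x)),\quad t\in\mathcal{I}_x,\,x\in E.
\end{equation*}
By Example \ref{ex.integrable&summable}, the local path-integrability of $l$ and local path-summability of $g$ ensure $a\in\mathfrak{A}_\phi^{loc}$; since $J_a\subseteq J_\Lambda\cup\mathrm{Supp}_g$ contains no confluent state, Theorem \ref{thm.a.Lebesgue} yields the Lebesgue decomposition $a=a^{ac}+a^{pd}$ with $\mathcal{X}a=l$, $\Delta a=g$, and $a^{sc}=0$. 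Let $A$ denote the predictable additive functional of $X$ whose characteristic is $a$, so that $\int_{(0,\tau]}e^{-\delta s}\,\ud A_s=\int_0^\tau e^{-\delta s}l(X_s)\,\ud s+\sum_{0<s<\tau}e^{-\delta s}g(X_s^-)$ almost surely.

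Next, because $f\in\D(\A)$, Theorem \ref{thm.measure-valued generator} provides the explicit expression for $\A f(x,\cdot)$. The hypothesis $\Lambda^{sc}=0$ together with Corollary \ref{cor.Lambda.Lebesgue} gives $\Lambda=\Lambda^{ac}+\Lambda^{pd}$, and the piecewise absolute path-continuity of $f$ with $J_{Df}\subseteq J_\Lambda\cup\mathrm{Supp}_g$ ensures $(\A f)^{sc}=0$. Reading off the remaining two components from (\ref{eq.measure-valued generator}), the absolutely continuous part of $\A f(x,\cdot)$ is carried by $\mathcal{X}f(x)+\lambda(x)\int_E[f(y)-f(x)]Q(x,\ud y)$, and the purely discontinuous part by $\Delta f(x)+\Delta\Lambda(x)\int_E[f(y)-f(x)]Q(x,\ud y)$, supported on $J_\Lambda$. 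Substituting the two identities of (\ref{eq.f.eq.nonsingular}) yields the scalar equations (\ref{eq.f.eq.ac}) and (\ref{eq.f.eq.pd}) for the pair $(\A f,a)$, while (\ref{eq.f.eq.sc}) is automatic since $(\A f)^{sc}=a^{sc}=0$. Theorem \ref{thm.equation.Lebesgue} then reassembles these into the measure integro-differential equation $\A f(x,\ud t)+a(x,\ud t)=\delta f(\phi(t,x))\ud t$ on $\mathcal{I}_x$, for every $x\in E$.

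Having (\ref{eq.f.eq.general}) in hand, and because the two remaining hypotheses of Theorem \ref{thm.UniqueSolution}, namely the jump-integrability (\ref{eq.m-condition.general}) and $\Ep_x[e^{-\delta t}f(X_{t\land\tau})]\to 0$, are assumed directly, Theorem \ref{thm.UniqueSolution} gives $f(x)=\Ep_x\big[\int_{(0,\tau]}e^{-\delta s}\,\ud A_s\big]$, which by the construction of $a$ is exactly (\ref{eq.unique.sol.nonsingular}). The main delicate point, I anticipate, is the reassembly step: one must verify that the path-a.e. identity in the first line of (\ref{eq.f.eq.nonsingular}) upgrades to an identity of absolutely continuous measures on $\mathcal{I}_x$ (this is where the piecewise absolute path-continuity of $f$ and the absolute continuity of $\Lambda^{ac}$ are essential), and that the jump identity holds only on the support $J_\Lambda\cup\mathrm{Supp}_g$ with no extraneous atoms in $\A f+a$ elsewhere, which is exactly guaranteed by the support condition $J_{Df}\subseteq J_\Lambda\cup\mathrm{Supp}_g$.
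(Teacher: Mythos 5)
Your proposal is correct and follows essentially the same route as the paper's own proof: identify $a(x,t)=\int_0^t l(\phi(s,x))\,\ud s+\sum_{0<s\leqslant t}g(\phi(s,x))$, use the hypotheses $\Lambda^{sc}=0$, the piecewise absolute path-continuity of $f$ and $J_{Df}\subseteq J_\Lambda\cup\mathrm{Supp}_g$ to see that the three component equations of Theorem \ref{thm.equation.Lebesgue} reassemble into the measure integro-differential equation (\ref{eq.f.eq.general}), and then invoke Theorem \ref{thm.UniqueSolution}. The paper's proof is terser but identical in substance; your added detail on the Lebesgue decompositions of $a$ and $\A f$ simply makes explicit what the paper leaves implicit.
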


\begin{proof}
  For a general PDMP $X$ with $\Lambda^{sc}=0$, if $f$ is piecewise absolutely path-continuous with $J_{Df}\subseteq J_\Lambda\cup\mathrm{Supp}_g$, then $\A f(x,\cdot)$ is piecewise absolutely continuous for all $x\in E$ with $J_{\A f}\subseteq J_\Lambda\cup\mathrm{Supp}_g$. Thus, it is following from the equations (\ref{eq.f.eq.nonsingular}) that $f$ satisfies the measure integro-differential equation (\ref{eq.f.eq.general}) with
  $$a(x,t)=\int_0^tl(\phi(s,x))\ud s+\sum_{0<s\leqslant t}g(\phi(s,x)),\quad t\in\mathcal{I}_x,\,x\in E.$$
  By Theorem \ref{thm.UniqueSolution}, the result is following from
  $$f(x)=\Ep_x\left[\int_{(0,t]} e^{-\delta s}\ud A_s\right],\quad x\in E.$$
  This completes the proof.
\end{proof}




\bibliographystyle{elsart-num-sort}
\bibliography{Measure-valued_generator_of_general_PDMPs}





\end{document}